\newtheorem{corollary}{Corollary}
\newtheorem{corollary2}{Corollary}
\newtheorem{claim}{Claim}
\newtheorem{theorem}{Theorem}
\newtheorem{definition}{Definition}
\newtheorem{lemma}{Lemma}
\newtheorem{proposition}{Proposition}
\theoremstyle{remark}
\newtheorem*{remark}{Remark}
\numberwithin{lemma}{section}
\numberwithin{claim}{section}
\numberwithin{corollary2}{section}
\numberwithin{definition}{section}
\numberwithin{proposition}{section}
\numberwithin{equation}{section}
\def\N{\mathbb{N}}
\def\Z{\mathbb{Z}}
\def\C{\mathbb{C}}
\def\R{\mathbb{R}}
\def\Q{\mathbb{Q}}
\def\<={\leq}
\def\>={\geq}
\def\inv{^{-1}}
\def\im{\mathrm{Im}\,}
\pgfplotsset{compat=1.18}
\newcommand{\set}[1]{\left\lbrace#1\right\rbrace}
\newcommand{\norm}[1]{\lVert #1 \rVert}
\newcommand{\normC}[2]{\norm{#1}_{C^{#2}}}
\newcommand{\normL}[2]
{\norm{#1}_{L^{#2}}}
\title{Rigidity of the Suris' %(correct use of '?\footnote{
%ChatGPT says that for the genitive form, we can use both Suris's or Suris' and gives spontaneously math related examples: 
%1. Suris’s (with apostrophe and s)
%    Suris’s theorem
%    (preferred in most modern %usage, including The Chicago Manual of Style)
%2. Suris’ (apostrophe only)
 %   Suris’ theorem
  %  (preferred in older or journalistic styles like the AP Stylebook)
%Which should you use?
 %   Academic writing (especially in mathematics): use Suris’s unless your institution or journal specifies otherwise.
  %  If you're following a specific style guide (e.g., APA, MLA, Chicago), check their rule for possessives of proper names ending in s.
%Ex. 1:
%According to \textbf{Suris's theory}, the discrete Lax pair is given by... Ex. 2:
%Alternatively, one may refer to \textbf{Suris' approach} in the context of integrable systems.}) 
potential in the Frenkel-Kontorova Model}
\author[1]{Corentin Fierobe\thanks{University of Rome Tor Vergata.
    \textit{E-mail}: \href{cpef@gmx.de}{\texttt{cpef@gmx.de}}}\hspace{2mm}  and Daniel Tsodikovich\thanks{  Institute of Science and Technology Austria
   Am Campus 1, Klosterneuburg 3400, Austria.
    \textit{E-mail}: \href{daniel.tsodikovich@ist.ac.at}{\texttt{daniel.tsodikovich@ist.ac.at}}}}
\date{}
\begin{document}

\maketitle
\begin{abstract}
The goal of this paper is to establish a local rigidity result for the integrability of standard-like maps.
The main focus of the paper is the remarkable integrable potential discovered by Suris in the 80's. 
We show that locally, the integrability of this potential is rigid.
The proof relies on a similar strategy that was used for billiards in an ellipse, and involves developing the action-angle coordinates for this system, and exploiting it to construct a Riesz basis for $L^2$.
As a corollary, we obtain a spectral rigidity result for this setting.
Finally, we study the integrability question in the setting of potentials that are periodic. 
\end{abstract}

\section{Introduction and Main Results}\label{sec:intro}

\subsection{Standard maps and Frenkel-Kontorova model}\label{subsec:FrenKonIntro}

% \textbf{UNIFORMITY IN $C^k$ NOTATION (\blue{$C^k$} vs $\mathscr {C}^k$)} {\color{red} Replaced $\mathscr {C}^k$ with ${C}^k$!}

% \textbf{\blue{$\partial_1$} VS $\partial_{x_1}$} {\color{red} I didn't see anywhere notations of the second type. }

% \textbf{\blue{FORMULAE} VS FORMULAS} {\color{red} Replaced formulas with formulae!}

% \textbf{CAPITALIZATION SCHEME FOR THEOREM, SECTION ETC: capitalize if it's direct reference. If it's just the word, then not ("in the next section" vs "in Section 2"} {\color{red} I think we follow this rule everywhere.}

The \textit{Frenkel-Kontorova model} is a standard model originating from condensed matter physics. 
This model describes an equilibrium of a system of 1-dimensional particles that interact according to spring-like nearest neighbors interactions, under the influence of a potential.
This model can be described in terms of a difference equation.
If $\set{x_n\in\R}_{n\in\Z}$ denotes the locations of the particles, then the following relation holds
\begin{equation}
\label{equation:FK_equation}
x_{n+1}-2x_n+x_{n-1} = V'(x_n),
\end{equation}
where $V:\R\to\R$ is the \textit{potential}, and from now on  we assume that it is periodic.
Sequences satisfying \eqref{equation:FK_equation} are the $x$-projection of orbits $(x_n,y_n)_{n\in\Z}$ of the so-called \textit{standard map} $F_V$ with potential $V$
\begin{equation}
    \label{equation:standard_map}
F_V:
\left\{
\begin{array}{rcl}
     x_1 & = & x_0+y_0+V'(x_0) \\
     y_1 & = & y_0+V'(x_0). 
\end{array}
\right.
\end{equation}
% \textcolor{red}{goes to section 2}
% This system admits a variation description via a generating function. 
% If we denote by 
% \begin{equation}\label{eq:GenFunc}
%     H_V(x_1,x_2)=\frac{1}{2}(x_2-x_1)^2+V(x_1),
% \end{equation}
% then it is immediate to verify that 
% \[F_V(x_0,y_0)=(x_1,y_1)\Longleftrightarrow\begin{cases}
% y_0=-\partial_1H_V(x_0,x_1),\\
% y_1=\partial_2 H_V(x_0,x_1),
    
% \end{cases}\]
% where $\partial_1,\partial_2$ denote partial derivatives with respect to the first and second variables.

This is an example of an exact twist map of a cylinder.
In Section \ref{sec:bg} we provide the relevant definitions and recall the relevant results we use.
A famous example of a potential that is chosen in this setting is $V_k(x)=\frac{k}{2\pi}\sin(2\pi x)$.
The corresponding map,  often called the Chirikov-Taylor-Greene standard map, has remarkable physical interpretations, see e.g. \cite{lichtenberg2013regular,AUBRY1983240,RevModPhys.64.795}.
We will not, however, study the potential $V_k$ in this work.
Different works, see \cite{lazutkin_integrable, suris1989integrable}, studied which of these systems are so-called \textit{integrable} in a sense of Liouville. This notion is inspired by Hamiltonian dynamics and Arnold-Liouville Theorem where integrability is related to the existence of first integrals, independent and in involution. %It is related to the existence of a foliation by invariant tori on which the dynamics is quasi-linear.

\subsection{Standard maps of Suris type}\label{subsec:StdSurisIntro}

In the discrete case, Suris \cite{suris1989integrable} studied the existence of first integrals for the system \eqref{equation:FK_equation} when $V$ varies along a $1$-parametric families of potential $(V_{\varepsilon})_{\varepsilon\in I}$. A \textit{first integral} of the system \eqref{equation:FK_equation} is a function $\Phi=\Phi(x,x')$ satisfying for any integer $n$
\[
\Phi(x_{n+1},x_n)=\Phi(x_n,x_{n-1}).
\]
He showed the following result:

\begin{theorem}[Suris \cite{suris1989integrable}]
\label{theorem:Suris}
Let $a>0$, and $(V_{\varepsilon})_{0\leq \varepsilon<a}$ be a one-paremeter family of potentials $V_{\varepsilon}$, analytic in $\varepsilon$.
    Assume that for any $0 \leq \varepsilon < a$ and  $V=V_{\varepsilon}$, the system \eqref{equation:FK_equation} admits a first integral $\Phi_{\varepsilon}$ of the form
    \[
    \Phi_{\varepsilon}=\Phi_0+\varepsilon\Phi_1
    \]
such that $\Phi_{\varepsilon}(x,x')=\Phi_{\varepsilon}(x',x)$ for any $0\leq \varepsilon < a$ and $x,x'\in \R$. Then $V_{\varepsilon}$ is given by
\begin{multline}
    V_{\varepsilon}'(x) = \\
    \frac{2}{\omega}\arctan
\left(
\frac{\frac{\omega\varepsilon}{2}\left(
A\sin(\omega x)+B\cos(\omega x)+C\sin(2\omega x)+D\cos(2\omega x)
\right)}
{1-\frac{\omega\varepsilon}{2}\left(
A\cos(\omega x)-B\sin(\omega x)+C\cos(2\omega x)-D\sin(2\omega x)+E
\right)}
\right)
\end{multline}
and $\Phi_0,\Phi_1$ can be chosen to be
\[
\Phi_0(x,x')=\frac{1}{\omega}\left(1-\cos(\omega(x'-x))\right)
\]
and
\begin{multline}
\Phi_1(x,x')=
\frac{1}{2\omega}\left(
A(\cos\omega x+\cos\omega x')
-B(\sin\omega x+\sin\omega x')\right.\\
\left.
+C\cos\omega(x+x')
-D\sin\omega(x+x')
+E\cos\omega(x'-x)
\right).
\end{multline}
\end{theorem}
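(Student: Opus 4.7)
The plan is to substitute $(x_{n+1}, x_n, x_{n-1}) = (2u - v + V_\varepsilon'(u),\, u,\, v)$ into the first integral identity, obtaining
\[
(\Phi_0 + \varepsilon \Phi_1)\bigl(2u - v + V_\varepsilon'(u),\, u\bigr) = (\Phi_0 + \varepsilon \Phi_1)(u, v).
\]
Since $V_\varepsilon'(u)$ depends on $u$ alone but this relation must hold for every $v$, and since $V_\varepsilon$ is analytic in $\varepsilon$, expanding in $\varepsilon$ will yield a tower of functional equations that together should pin down $\Phi_0$, $\Phi_1$, and the Taylor coefficients of $V_\varepsilon'$.

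At order $\varepsilon^0$, $\Phi_0$ must be a first integral of $F_{V_0}$; from the symmetry $\Phi_0(x,x') = \Phi_0(x',x)$ and the requirement that the subsequent orders produce a nontrivial $\Phi_1$, I would argue that $V_0'\equiv 0$, so the underlying map is free and $\Phi_0$ is a symmetric function of $x'-x$. Selecting a single fundamental Fourier mode and normalizing gives $\Phi_0(x,x') = (1 - \cos(\omega(x'-x)))/\omega$ for some frequency $\omega$. The order $\varepsilon^1$ equation, using $\partial_1\Phi_0(2u-v,u) = \sin(\omega(u-v))$, reduces to the finite-difference equation
\[
\Phi_1(u,v) - \Phi_1(2u - v,\, u) = V_1'(u)\, \sin(\omega(u-v));
\]
decomposing $\Phi_1$ into Fourier modes (with frequencies locked to integer multiples of $\omega$) and enforcing the symmetry $\Phi_1(u,v) = \Phi_1(v,u)$, the admissible modes should reduce to the five combinations listed in the statement, with $A,B,C,D,E$ the free amplitudes, and $V_1'$ read off from the same equation.

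With $\Phi_0$ and $\Phi_1$ now explicit, the final step is to apply sum-to-product identities in the full (unexpanded) identity. My expectation is that the $v$-dependence (equivalently, dependence on $s = u - v$) factors out as a common $\sin(\omega s + \omega V_\varepsilon'(u)/2)$ on both sides and cancels, reducing everything to an algebraic equation for $V_\varepsilon'(u)$ of the shape
\[
2 \sin\!\bigl(\tfrac{\omega}{2} V_\varepsilon'(u)\bigr) = \varepsilon\bigl[P(u) \cos\!\bigl(\tfrac{\omega}{2} V_\varepsilon'(u)\bigr) + Q(u) \sin\!\bigl(\tfrac{\omega}{2} V_\varepsilon'(u)\bigr)\bigr],
\]
with $P(u)$, $Q(u)$ the trigonometric polynomials in $\omega u$ appearing in the numerator and denominator of the stated $\arctan$. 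Solving for $\tan(\tfrac{\omega}{2} V_\varepsilon'(u))$ then yields the closed-form expression directly. I expect the main technical obstacle to be the Fourier classification in the middle step: showing that no other Fourier content in $\Phi_1$ is compatible with both the symmetry and the finite-difference equation, so that the five-parameter family is genuinely exhaustive. Once this rigidity of $\Phi_1$ is secured, the cancellation in the last step and the implicit solution for $V_\varepsilon'$ follow almost mechanically, and the higher-order equations in $\varepsilon$ encode no new information beyond what the sum-to-product identity already captures.
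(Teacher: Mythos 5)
The paper does not contain a proof of this theorem; it is stated verbatim and attributed to Suris' original 1989 paper, so there is no in-paper argument to compare against. On its own merits, your sketch captures the correct high-level strategy — the one Suris in fact used: substitute $x_{n+1}=2x_n-x_{n-1}+V'_\varepsilon(x_n)$ into the invariance relation, expand in $\varepsilon$, derive functional equations for $\Phi_0$ and $\Phi_1$, and then apply sum-to-product identities to factor out the $s=u-v$ dependence and solve an algebraic relation for $V'_\varepsilon$. Your order-$\varepsilon$ equation $\Phi_1(u,v)-\Phi_1(2u-v,u)=V_1'(u)\sin(\omega(u-v))$ is correct, and the final cancellation does go through: with $\Phi_0$, $\Phi_1$ as stated, each mode contributes a difference of the form $-2\sin\bigl(\omega(s+V'/2)\bigr)\cdot(\text{function of }u,V')$ via $\cos A-\cos B=-2\sin\tfrac{A+B}{2}\sin\tfrac{A-B}{2}$, so the common factor $\sin(\omega(s+V'/2))$ divides out and leaves exactly the algebraic equation you anticipate, which rearranges to the stated $\arctan$ formula.

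That said, the proposal is a plan rather than a proof, and it defers the genuinely hard content. The claim $V_0'\equiv 0$ is asserted (``I would argue'') but not argued; it requires an actual deduction from the existence of a nontrivial symmetric $\Phi_0$ conserved by $F_{V_0}$. The restriction of $\Phi_0$ to a single Fourier mode in $x'-x$ (rather than an arbitrary even function of $x'-x$) is likewise asserted without justification. Most importantly, the Fourier classification of $\Phi_1$ — showing that the symmetry $\Phi_1(x,x')=\Phi_1(x',x)$ together with the finite-difference equation leaves only the five listed modes — is exactly the technical core of Suris' argument, and you flag it as the ``main technical obstacle'' without carrying it out. Until those classification steps are filled in, the sketch is a faithful roadmap to Suris' proof but does not itself constitute one.
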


\begin{remark}
    In \cite{suris1989integrable}, the result of Suris studies the case when $V$ is not necessarily periodic, which gives rise to two other classes of potentials for which the system \eqref{equation:FK_equation} admits a first integral. We do not give the explicit formulae of the potentials in these classes here as we will consider the periodic case only.
\end{remark}

Note that, by changing $A$ into $\tfrac{\lambda}{1-\lambda E}A$, $B$ into $\tfrac{\lambda}{1-\lambda E}B$, etc. where $\lambda=\omega\varepsilon/2$,  one can elliminate redundant parameters of $V_{\varepsilon}$, and we introduce the class of \textit{Suris' potentials}:

\begin{definition}
\label{definition:Suris_potential}
A \textit{Suris' potential} is a function $V:\R\mapsto\R$ of the form
\begin{equation}\label{eq:SurisPotentialGeneralForm}
V(x) = \frac{2}{\omega}\int\limits_0^x\arctan
\left(
\frac{A\sin(\omega \xi)+B\cos(\omega \xi)+C\sin(2\omega \xi)+D\cos(2\omega \xi)}{1-A\cos(\omega \xi)+B\sin(\omega \xi)-C\cos(2\omega \xi)+D\sin(2\omega \xi)}
\right)d\xi
\end{equation}where $\omega\in\R$ is the \textit{frequency} of $V$, and $A,B,C,D$ are real constants such that $V$ is defined everywhere. 
For concreteness let us assume that $\omega=2\pi$.
We call 
\[
\varepsilon := \sqrt{A^2+B^2+C^2+D^2}\geq 0
\]
the \textit{eccentricity} of $V$. 
\end{definition}

The notation $\varepsilon$ here is independent of Suris' result \ref{theorem:Suris}, and will refer only to the eccentricity of a potential $V$.
This terminology is of course inspired by ellipses.
In Subsection \ref{subsec:SimilarityToBilliards} we draw parallel lines between the Suris potential and billiards in an ellipse, in terms of their action-angle coordinates, giving some justification for this choice of terminology.
Note that $V$ is well-defined when $\varepsilon$ is sufficiently small, say $\varepsilon\leq \frac{1}{4}$.
Going forward, we always assume that assumption about the eccentricity.
The choice $\omega=2\pi$ implies that the maps we consider are $1$-periodic in $x$.
In this form, the tuple $(A,B,C,D)$ is uniquely determined by the Suris potential.

As Proposition \ref{prop:rotationIntervalIncludesStuff} below shows, Suris maps with small enough eccentricity are \textit{rationally integrable} in the interval $[\frac{1}{6},\frac{1}{3}]$: they posses invariant curves of any rotation number in the interval $[\frac{1}{6},\frac{1}{3}]$ (see Definition \ref{def:ratioInteg} in Section \ref{sec:bg} below).
Our main result shows that this property is locally unique for the Suris potentials.
Namely, we prove the following:
\begin{theorem}\label{thm:LocalRigidity}
   There exists an integer $r\geq 23$, $\varepsilon_*>0$ and $\delta>0$ with the following property: for $0<\varepsilon<\varepsilon_*$, and $F=F_{V_S+W}$ a twist map where $V_S$ is a Suris potential of eccentricity $\varepsilon$, and $\normC{W}{r}\leq \delta$;
   if $F$ is rationally integrable in $[\frac{1}{6},\frac{1}{3}]$, then $V_S+W$ is itself a Suris potential.
\end{theorem}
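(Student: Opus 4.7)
My approach mirrors the blueprint of the elliptic billiard rigidity results, adapted to the Suris setting as the introduction anticipates. The first step is to construct action-angle coordinates $(\phi,I)\in\R/\Z\times J$ for the integrable map $F_{V_S}$ on the annular region foliated by invariant curves with rotation numbers in a neighborhood of $[1/6,1/3]$. In these coordinates the dynamics becomes the twist $(\phi,I)\mapsto(\phi+\rho(I),I)$ with $\rho$ smooth and monotone, and the embedding back to $(x,y)$-coordinates is given by smooth functions $x(\phi,I)$, $y(\phi,I)$. Existence of such coordinates follows from Arnold--Liouville-type reasoning applied to the first integral $\Phi_\varepsilon$ furnished by Theorem \ref{theorem:Suris}.

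Next, I would derive the key linearized integrability condition. For each $p/q\in[1/6,1/3]$ with $\gcd(p,q)=1$, the invariant curve $\Gamma_{p/q}$ is foliated by $(p,q)$-periodic orbits that are critical points of the discrete action $\mathcal{A}(x_0,\dots,x_{q-1})=\sum_{n=0}^{q-1}\bigl[\tfrac12(x_{n+1}-x_n)^2-V_S(x_n)\bigr]$, and $\mathcal{A}$ is constant along this family. Rational integrability of $F_{V_S+W}$ forces the analogous property for the perturbed action; a first-order expansion, exploiting that the unperturbed orbits are critical points of $\mathcal{A}$, yields the condition
\[
\phi \;\longmapsto\; \sum_{n=0}^{q-1} W\bigl(x(\phi+np/q,\,I_{p/q})\bigr) \quad \text{is constant in } \phi,
\]
up to errors quadratic in $\|W\|_{C^r}$. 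Setting $f_{p/q}(\phi):=W(x(\phi,I_{p/q}))$, Fourier analysis on $\R/\Z$ rewrites this as the vanishing of $\widehat{f_{p/q}}(k)$ whenever $q\mid k$ and $k\neq 0$. Equivalently, $W$ must be $L^2$-orthogonal to a countable family of test functions on $\R/\Z$ obtained by pulling back the characters $e^{2\pi i k\phi}$ through the maps $\phi\mapsto x(\phi,I_{p/q})$, weighted by the appropriate Jacobian factor.

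The technical heart of the proof is then to show that this countable family, together with the four-dimensional subspace spanned by the Suris tangent vectors $\partial_A V_S, \partial_B V_S, \partial_C V_S, \partial_D V_S$, forms a Riesz basis of the zero-mean subspace of $L^2(\R/\Z)$. At $\varepsilon=0$ the action-angle map $x(\phi,I)$ reduces to $\phi+\mathrm{const}$ and the family reduces to the standard Fourier basis with the four lowest nontrivial modes removed; those four modes are precisely the Suris tangents at $\varepsilon=0$, so the assertion holds trivially. For small $\varepsilon>0$ a Paley--Wiener / Kadec $\tfrac14$-type stability argument, fed with quantitative bounds on $x(\phi,I)$ and its inverse, is expected to preserve the Riesz basis property. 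With this in hand, the vanishing orthogonality conditions force $W$ to lie in the span of the four Suris tangent vectors up to a higher-order remainder, and an implicit function theorem in a $C^r$ Banach space realises $V_S+W$ as an actual Suris potential with nearby parameters $(A,B,C,D)$.

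\emph{Main obstacle.} The Riesz basis step is the crux of the argument: it requires precise control of the action-angle change of variables, of its dependence on the eccentricity $\varepsilon$, and of the asymptotics of $x(\phi,I_{p/q})$ as $q\to\infty$. The regularity hypothesis $r\geq 23$ reflects the number of derivatives consumed by (i) establishing action-angle coordinates on a full neighborhood of the invariant curves, (ii) passing from the linearised condition to the genuinely nonlinear rigidity statement, and (iii) executing the Kadec-type perturbation argument quantitatively in $\varepsilon$.
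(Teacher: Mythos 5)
Your blueprint matches the paper's at the level of strategy: action-angle coordinates, an integrability-driven Fourier orthogonality condition, a Riesz basis built from Jacobian-weighted pullbacks of characters through the action-angle maps, and a final step identifying $V_S+W$ with a nearby Suris potential. The identification of the four Suris tangent directions $\partial_A V_S,\dots,\partial_D V_S$ as the low modes that must be excised, and of the remaining pullback characters as the basis elements that detect integrability, is exactly right and is the content of the paper's Proposition \ref{proposition:Hilbertbasis}, Lemmas \ref{lemma:technical_fq_low}--\ref{lemma:technical_fq_high}, and Theorem \ref{thm:ActionEstimate}.

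That said, your last step has a genuine gap. You propose to conclude via an implicit function theorem once ``$W$ lies in the span of the four Suris tangent vectors up to a higher-order remainder.'' But the orthogonality relations you derive hold only up to errors quadratic in $\|W\|$ (and, in the paper, the action comparison carries an explicit $q^4\|W\|_{C^1}^2$ loss from the orbit-deviation estimate). With a quadratic error, you do not obtain an exact equation to which the IFT applies; instead you must iterate. The paper does this in Proposition \ref{prop:ProjectPotential} and Section \ref{sec:ProofOfMainThm}: project $W$ onto the Suris tangent space, bound the $C^1$ norm of the remainder $\tilde W$ by $C(\|W\|_{C^{23}})\,\|W\|_{C^1}^{231/230}$ via a Sobolev interpolation inequality (this is where the threshold $r\geq 23$ is actually consumed, together with the tail estimate for $|\langle W,f_q\rangle|$), and then close the loop with a compactness/minimization argument on the space of nearby Suris potentials rather than with the IFT. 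Without the interpolation step the quadratic error beats the projection gain and the scheme does not converge.

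A secondary imprecision: a Kadec-$\tfrac14$ argument does not apply verbatim, since your test functions are not exponentials $e^{i\lambda_k\phi}$ with perturbed real frequencies $\lambda_k$ but Jacobian-weighted pullbacks $e^{2\pi i q\theta_{r_q}(x)}\theta'_{r_q}(x)/\theta'_{1/4}(x)$. The Paley--Wiener perturbation route is the right one, and the paper implements it by directly estimating the operator norm of $T-I$, where $T$ maps the undeformed basis to the deformed one; the necessary decay in $q$ comes from the specific choice of rotation numbers $r_q$ with $|r_q-\tfrac14|q=\mathcal O(1)$ and the introduction of an intermediate family $\tilde e_q$ that isolates the $u(x)$-contribution from Proposition \ref{proposition:expansion_theta}. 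You would need an analogous quantitative mechanism to make the ``Kadec-type'' claim precise.
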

This result is inspired by similar works about local results about the Birkhoff conjecture, regarding the integrability of billiards.
In particular, in this work we mimic the method used in \cite{avila2016integrable}. 
That method was improved in subsequent works (see \cite{kaloshin2018local,huang2018nearly, koval2021local}).
It was also adapted to symplectic billiards in \cite{tsodikovich2025local}.
In \cite{ARNAUD2023109175}, the authors considered deformations of integrable twist maps (in general dimension).
They showed that a deformation which is linear in the deformation parameter cannot be integrable.
It is important to emphasize that this result considers a linear perturbation of the potential, while Suris' example invovles a linear deformation of the integral, so the results do not contradict.
Our result allows for a more general deformation, but is specific for a deformation of the Suris potential.
In \cite{Lomel__2000}, the authors studied the destruction of the saddle connection of the Suris map under some specific types of deformation.
This implies that integrability breaks under this deformation. 
In that sense, our result generalizes that result, since we show that integrability breaks under arbitrarily small perturbation.
In the non-periodic case, an area preserving diffeomorphism of $\R^2$ with a similar form is the McMillan map.
The rigidity of the integrability of that map was showed in \cite{martin2009exponentially}.

Let us describe one corollary of our main result.
Suppose that one would try to prove by Theorem \ref{theorem:Suris} by considering deformations quadratic in $\varepsilon$, i.e., integrals of the form
\[\Phi_\varepsilon=\Phi_0+\varepsilon\Phi_1+\varepsilon^2\Phi_2,\]
and in principle, one might find new examples of integrable potentials this way.
Our result shows that in this approach one will not find new rationally integrable potentials.
This does not rule out the possibility of finding new potentials in which some of the invariant curves become singular.

\subsection{Spectral rigidity}

Given a potential $V$, the map $F_V$ is a so-called exact area-preserving twist map of the cylinder -- see Section \ref{sec:bg} for more details. It comes with a generating map
\[
H_V(x_1,x_2) = \frac{1}{2}(x_2-x_1)^2 + V(x_1),
\]
defined in such a way that orbits of $F_V$ corresponds to configurations \((x_i)_{i\in\mathbb{Z}}\) satisfying
\[
\partial_{2}H_V(x_{i-1},x_i) + \partial_{1}H_V(x_i,x_{i+1})\big) = 0.
\]
Here $\partial_1$ and $\partial_2$ denote the derivatives with respect to the first and second variables.
Given an orbit of $F_V$ corresponding to a $q$-periodic configuration \(x=(x_i)_{i\in\mathbb{Z}}\), its action is the quantity 
\[
A_V(x) = \sum_{i=0}^{q-1} H_V(x_i,x_{i+1}).
\]
The closure of the set of all such action values, taken over periodic orbits forms the \emph{action spectrum} of $V$ and will be denoted by $\mathscr A(V)$.

For each rotation number \(\omega \in \mathbb{R}\), one can define the average action of minimizing orbits of rotation number $\omega$ of $F_V$, and denote it by $\beta_V(\omega)$. See \cite{Gole, Siburg} for the precise definition.
The function $\beta_V:\R\to\R$ is a convex function called Mather's beta function and it encodes the spectral data of the map $F_V$.

One can ask what are the potentials with the same action spectrum, respectively the same beta function.
Given a potential $V$, the set of potentials with the same action spectrum, namely the set
\[
\mathscr I(V) = \{ W\,|\, \mathscr A(W)= \mathscr A(V) \}
\]
is called the \textit{isospectral set} of $V$. An important problem is to classify these isospectral sets.

One can consider the analogous problem with Mather's beta function by considering, for a given potential $V$, the set 
\[
\mathscr I_{\beta}(V) = \{ W\,|\, \beta_{W}=\beta_{V}\}
\]
called the \textit{marked isospectral set} of $V$. The terminology comes from the interpretation of $\beta$ as a \textit{marked} version of the action spectrum: $\beta$ provides the rotation numbers associated to the considered spectral data.

We can deduce from Theorem \ref{thm:LocalRigidity} the first following result:

\begin{corollary}\label{cor:SpecRigid1}
There exist $r\geq 23$, $\varepsilon_*>0$ and $\delta>0$ with the following property: given a Suris potential $V_S$ of eccentricity $\varepsilon\in(0,\varepsilon_*)$, any other potential $V$ which is $\delta$-$C^{r}$ close to $V$ and satisfying
\[
\beta_V=\beta_{V_S}
\]
is itself a Suris potential.
\end{corollary}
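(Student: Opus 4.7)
The plan is to derive Corollary \ref{cor:SpecRigid1} directly from Theorem \ref{thm:LocalRigidity}, with the only nontrivial work being the bridge from the spectral hypothesis $\beta_V = \beta_{V_S}$ to the dynamical hypothesis of rational integrability on $[\tfrac16,\tfrac13]$. Choose $r$, $\varepsilon_*$, and $\delta$ as furnished by Theorem \ref{thm:LocalRigidity}, and write $V=V_S+W$ with $W:=V-V_S$, so that $\|W\|_{C^r}\le\delta$ by assumption. It then suffices to show that $F_V$ is rationally integrable on $[\tfrac16,\tfrac13]$, for then Theorem \ref{thm:LocalRigidity} applies and concludes that $V=V_S+W$ is itself a Suris potential.

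To obtain rational integrability for $F_V$, I would invoke the standard Aubry--Mather theoretic characterization relating invariant rotational circles to the regularity of Mather's $\beta$ function (see e.g.\ Siburg's monograph \cite{Siburg} or Gol\'e \cite{Gole}). The relevant fact is: for an exact area-preserving twist map of the cylinder, $\beta$ is differentiable at a rational rotation number $p/q$ if and only if the map admits an invariant rotational (Lipschitz) curve of rotation number $p/q$ consisting of minimizers; differentiability rules out the heteroclinic/minimax orbits between distinct periodic minimizers and forces the Aubry set at $p/q$ to be a full invariant circle. (For irrational rotation numbers the Mather set is automatically a Lipschitz graph, so differentiability is equivalent to the graph being a curve foliated by minimizers of that rotation number.)

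By Proposition \ref{prop:rotationIntervalIncludesStuff}, the Suris map $F_{V_S}$ possesses invariant curves at \emph{every} rotation number in $[\tfrac16,\tfrac13]$; in particular $\beta_{V_S}$ is differentiable (indeed $C^1$) throughout this interval. The hypothesis $\beta_V=\beta_{V_S}$ therefore transfers this differentiability to $\beta_V$, so by the characterization above, for every rational $p/q\in[\tfrac16,\tfrac13]$ the map $F_V$ carries an invariant rotational curve of rotation number $p/q$. This is exactly the definition of rational integrability on $[\tfrac16,\tfrac13]$ (Definition \ref{def:ratioInteg}).

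The main obstacle, and the step that needs the most care, is precisely this spectral-to-dynamical transfer at rational rotation numbers: for irrational $\omega$ the graph property of the Mather set is automatic, but at rational $p/q$ one must invoke the finer Aubry--Mather statement that differentiability of $\beta$ at $p/q$ equals the absence of action-minimizing heteroclinic orbits between neighboring $(p,q)$-periodic minimizers, which in turn is what produces a genuine invariant curve rather than a Cantor-like or partial minimizing set. Once that classical input is in place, the rest of the argument is a one-line application of Theorem \ref{thm:LocalRigidity}.
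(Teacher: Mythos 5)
Your proposal is correct and takes essentially the same route as the paper: select the constants from Theorem \ref{thm:LocalRigidity}, use the characterization of differentiability of Mather's $\beta$-function at rational rotation numbers (Proposition \ref{proposition:differentiability_beta}, i.e.\ Siburg's Theorem 1.3.7) to go from rational integrability of $F_{V_S}$ to differentiability of $\beta_{V_S}=\beta_V$ on $[\tfrac16,\tfrac13]$ and then back to rational integrability of $F_V$, and finally apply Theorem \ref{thm:LocalRigidity}. Your extra remarks about $C^1$ regularity of $\beta_{V_S}$ and the irrational case are true but not required, since the definition of rational integrability and Proposition \ref{proposition:differentiability_beta} only involve rational rotation numbers.
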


One can also ask the question in terms of \textit{deformations}: given an initial potential $V_0$, is
it possible to deform it into a smooth family $V_{\tau}$ of potentials parametrized by a real variable $\tau$ such that the action spectrum remains constant along the deformation?

\begin{corollary}\label{cor:SpecRigid2}
There exist $r\geq 23$, $\varepsilon_*>0$ and $\delta>0$ with the following property: given a Suris potential $V_S$ of eccentricity $\varepsilon\in(0,\varepsilon_*)$, any $C^{23}$-smooth one-parameter family of potentials $(V_{\tau})_{\tau\in I}$defined on an interval $I$ containing $0$ such that $V_0 = V_S$, and satisfying

\[
\normC{V_\tau-V_S}{r}<\delta,\quad 
\mathscr A(V_{\tau}) =\mathscr A(V_{S}) \qquad \tau\in I
\]
consists entirely of Suris potentials.
\end{corollary}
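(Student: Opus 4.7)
The plan is to reduce Corollary \ref{cor:SpecRigid2} to Corollary \ref{cor:SpecRigid1} by upgrading, under the smooth deformation hypothesis, the constancy of the unmarked action spectrum $\mathscr A(V_\tau)$ to the constancy of the marked spectrum, i.e.\ Mather's $\beta$-function. Once $\beta_{V_\tau}=\beta_{V_S}$ for every $\tau\in I$, each $V_\tau$ is $\delta$-$C^r$-close to $V_S$ and lies in the marked isospectral set of $V_S$, so Corollary \ref{cor:SpecRigid1} applies directly.

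First, I would fix a rational $p/q\in[1/6,1/3]$ in lowest terms. Since $V_S$ is Suris with small eccentricity, $F_{V_S}$ admits an invariant curve of rotation number $p/q$ foliated by minimizing $(p,q)$-periodic orbits sharing the common action $q\beta_{V_S}(p/q)$. Under the $C^r$-small smooth family $V_\tau$, standard Aubry--Mather theory produces minimizing $(p,q)$-periodic orbits $\gamma_\tau$ of $F_{V_\tau}$ depending continuously (in fact $C^{r-1}$-smoothly, via an implicit-function argument applied to the discrete action functional) on $\tau$. The action
\[
a_{p/q}(\tau):=A_{V_\tau}(\gamma_\tau)=q\,\beta_{V_\tau}(p/q)
\]
is therefore a $C^{r-1}$ function of $\tau$, and the envelope theorem yields
\[
\tfrac{d}{d\tau} a_{p/q}(\tau)=\sum_{i=0}^{q-1}\partial_\tau V_\tau\bigl(x_i(\tau)\bigr).
\]
By the hypothesis $\mathscr A(V_\tau)=\mathscr A(V_S)$, the continuous function $a_{p/q}:I\to\R$ takes values in the fixed closed set $\mathscr A(V_S)$, and its image is a closed subinterval of $\mathscr A(V_S)$ close to $q\beta_{V_S}(p/q)$.

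The crux — and the main obstacle — is to show that $a_{p/q}$ is constant. This requires pinning down the value $q\beta_{V_S}(p/q)$ inside the unmarked spectrum $\mathscr A(V_S)$ in a way that is stable under the deformation. To do so, I would employ Melrose--Marvizi-type asymptotic identification: using the action--angle coordinates for the Suris map constructed in Subsection \ref{subsec:SimilarityToBilliards}, derive an asymptotic expansion of $q\beta_{V_S}(p/q)$ in $1/q$ (for $p/q$ approaching a fixed irrational, or approaching the boundary of the integrable interval) whose coefficients are intrinsic to the Suris potential. Since $a_{p/q}(\tau)$ is close to $q\beta_{V_S}(p/q)$, lies in $\mathscr A(V_S)$, and corresponds to a minimizing $(p,q)$-orbit, it must agree with the unique element of $\mathscr A(V_S)$ realizing the same asymptotic. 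This forces $a_{p/q}(\tau)=q\beta_{V_S}(p/q)$ for all $\tau\in I$ and all sufficiently large $q$, and hence $\beta_{V_\tau}(p/q)=\beta_{V_S}(p/q)$ for a dense set of rationals $p/q\in[1/6,1/3]$.

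Finally, by the continuity and convexity of $\beta_{V_\tau}$ and $\beta_{V_S}$, equality extends to all of $[1/6,1/3]$; a standard convex-analytic argument, combined with Aubry--Mather persistence of minimizing orbits of other rotation numbers (for which one repeats the argument above), extends the equality to all of $\R$. Corollary \ref{cor:SpecRigid1} then applies to each $V_\tau$ and concludes that every member of the family is a Suris potential. The technical heart of the plan is thus the Melrose--Marvizi-style asymptotic identification of minimizing actions for Suris potentials, which has to be developed from the action--angle description established earlier in the paper.
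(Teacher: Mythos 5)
Your high-level reduction --- upgrade constancy of the unmarked spectrum $\mathscr A(V_\tau)$ to constancy of the marked spectrum $\beta_{V_\tau}$, then invoke Corollary~\ref{cor:SpecRigid1} --- is precisely the paper's strategy. But the paper discharges the upgrade step by citing an existing result (\cite[Corollary~3.2.3]{Siburg}, \cite{FKScpt}): a smooth family of twist maps with constant action spectrum has constant $\beta$-function. You instead attempt to re-derive this, and the crucial step --- showing that $a_{p/q}(\tau)$ does not move inside $\mathscr A(V_S)$ --- is genuinely incomplete. You flag it yourself as ``the crux --- and the main obstacle,'' and the Melrose--Marvizi-type asymptotic identification you propose ``has to be developed.'' That is a real gap, not a deferrable technicality; as written, the argument does not establish the statement.

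Moreover, the asymptotic-identification route is likely the wrong tool. The standard proof of the cited lemma is elementary and direct: by Sard's theorem, the action spectrum $\mathscr A(V_S)$ is a closed set of Lebesgue measure zero; the map $\tau\mapsto a_{p/q}(\tau)$ is $C^1$ in $\tau$ (via the implicit function theorem applied to the minimizing $(p,q)$-periodic configuration, which is nondegenerate in the transverse direction); and a $C^1$ function from an interval whose image lies in a null set must have identically vanishing derivative (otherwise it is a local diffeomorphism onto an interval of positive measure), hence is constant. This handles \emph{all} rationals $p/q\in[1/6,1/3]$ at once, and continuity of $\beta$ finishes the job. Your asymptotic approach, by contrast, could at best treat $q$ large, and the claimed ``unique element of $\mathscr A(V_S)$ realizing the same asymptotic'' is not established --- the unmarked spectrum may accumulate around a given action value, so the needed uniqueness/separation is not free. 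If you want a self-contained argument rather than a citation, the Sard-plus-$C^1$-null-image argument is the one to write out.
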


The proofs of Corollaries \ref{cor:SpecRigid1} and \ref{cor:SpecRigid2} will be presented in Section \ref{section:rigidity_cor_proof}.

\subsection{Periodic rigidity}

We will also focus on some other sense in which Suris' potential is rigid. 
The Suris potential can definitely be $\frac{1}{2}$-periodic, if we set $A=B=0$.
Therefore there are $\frac{1}{2}$-periodic integrable potentials.
However, the resulting map $F_V$ does not have an invariant curve of $2$ periodic orbits. 
This can be checked by a direct computation.
The next result, which is in the spirit of \cite{10.1093/imrn/rnab366}, says that the same holds for higher periods as well.
In particular, there is no hope of finding non-constant rationally integrable potentials with period smaller than $\frac{1}{2}$.
\begin{theorem}\label{thm:NoHigherOrderInteg}
    Suppose that $V$ is a smooth function and $\frac{r}{k}$-periodic, with $r$ being coprime with $k$, and $k\geq 2$.
    If $F_V$ has an invariant curve of $k$-periodic orbits, then $V$ is constant.
\end{theorem}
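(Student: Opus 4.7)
The plan is to exploit the extra periodicity of $V$ to collapse the dynamics of the invariant curve onto a smaller circle, where a rigidity argument forces it to be a pure translation. First I reduce the hypothesis: since $V$ is $1$-periodic (the ambient normalization) and $\frac{r}{k}$-periodic with $\gcd(r,k)=1$, the group of its periods contains $\Z+\frac{r}{k}\Z=\frac{1}{k}\Z$, so $V$ is actually $\sigma$-periodic with $\sigma=\frac{1}{k}$. In particular $F_V$ commutes with the translation $T_\sigma(x,y)=(x+\sigma,y)$.

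Write the invariant curve as $\Gamma=\{(x,g(x))\}$. The restricted dynamics is the circle diffeomorphism $f(x)=x+g(x)+V'(x)$, and $k$-periodicity of every orbit on $\Gamma$ gives $f^k(x)=x+m$ in the lift, for some $m$ with $\gcd(m,k)=1$. The key intermediate step I aim for is to show that $g$ itself is $\sigma$-periodic. Granted this, $f$ commutes with $T_\sigma$ and descends to a smooth orientation-preserving diffeomorphism $\bar f$ of the smaller circle $\bar{\mathbb T}=\R/\sigma\Z$. Since $m\in\sigma\Z$, we have $\bar f^{\,k}=\mathrm{id}$ on $\bar{\mathbb T}$, and the induced rotation number is $m\bmod 1=0$, so $\bar f$ has a fixed point. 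Viewing $\bar f$ on the complement of this fixed point as an orientation-preserving diffeomorphism of $\R$ with $\bar f^{\,k}=\mathrm{id}$, strict monotonicity (any non-identity increasing map iterated stays non-identity) forces $\bar f=\mathrm{id}$. Lifting back, $f(x)\equiv x+\frac{m}{k}$, so $g+V'\equiv\frac{m}{k}$ is constant. Substituting into the invariance identity $g\circ f=g+V'$ yields $g(x+\frac{m}{k})\equiv\frac{m}{k}$; since $x\mapsto x+\frac{m}{k}$ is surjective on $\mathbb{T}$, $g\equiv\frac{m}{k}$ is constant, hence $V'\equiv 0$ and $V$ is constant.

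The main obstacle is therefore the $\sigma$-periodicity of $g$. Because $F_V$ commutes with $T_\sigma$, each horizontal translate $T_{j\sigma}\Gamma=\{(x,g(x-j\sigma))\}$ is again an $F_V$-invariant curve of $k$-periodic orbits of rotation number $m/k$. If $g$ were not $\sigma$-periodic, the $T_\sigma$-orbit of $\Gamma$ would consist of $e\geq 2$ distinct graphs cyclically permuted by $T_\sigma$. The clean case is when these graphs are pairwise disjoint: disjoint graphs of continuous functions on $\mathbb T$ are linearly ordered by height (no crossings), and $T_\sigma$, being a horizontal translation, preserves $y$-coordinates and hence this ordering; but a cyclic permutation of $e\geq 2$ elements cannot preserve any linear order---a contradiction. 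The delicate case is when two translates intersect; at such an intersection point $p$, the tangent lines of the two crossing curves are independent $1$-eigenvectors of $DF_V^k(p)$, which together with area-preservation forces $DF_V^k(p)=I$. I would then use the variational interpretation of $k$-periodic orbits as critical points of the discrete action, together with the $T_\sigma$-symmetry, to analyze the Hessian at $p$ and rule out transverse intersections, thereby reducing back to the disjoint case.
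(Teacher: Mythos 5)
Your proposal takes essentially the same route as the paper: exploit the commutation of $F_V$ with the translation $T_\sigma$ (with $\sigma=\tfrac1k$, noting correctly that $\tfrac{r}{k}$-periodicity plus $1$-periodicity gives $\tfrac1k$-periodicity), show that the graph function of the invariant curve is $\sigma$-periodic, descend to the quotient circle, conclude that the dynamics on the curve is exactly rotation by a rational multiple of $\sigma$, and read off $V'\equiv 0$ from the Frenkel--Kontorova relation. Your ``descend to $\bar{\mathbb T}=\R/\sigma\Z$, use $\bar f^k=\mathrm{id}$ and a fixed point to force $\bar f=\mathrm{id}$'' step is the same argument that the paper invokes by citing a lemma (``$\tilde S(x+\tfrac{r}{k})=\tilde S(x)+\tfrac{r}{k}$ and $\tilde S^k(x)=x+r$ imply $\tilde S(x)=x+\tfrac{r}{k}$'').

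The one place where you genuinely diverge from the paper is also the one place where you have left a gap. The paper establishes the $\sigma$-periodicity of $f$ by the chain of equalities $f(x_0)=y_0=x_1-x_0-V'(x_0)=(x_1+\tfrac{r}{k})-(x_0+\tfrac{r}{k})-V'(x_0+\tfrac{r}{k})=f(x_0+\tfrac{r}{k})$, which tacitly assumes the shifted orbit $\{x_j+\tfrac{r}{k}\}$ already lies on $\Gamma$ --- that is, it assumes $T_\sigma\Gamma=\Gamma$, which is exactly the periodicity to be shown. You correctly flag this as something that needs argument, and you supply a clean ordering argument when the translates $T_{j\sigma}\Gamma$ are pairwise disjoint. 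But the ``delicate case'' of intersecting translates is not resolved: the suggested route via $DF_V^k(p)=I$ and a Hessian/variational analysis is only a sketch, and the proof cannot be declared complete with it in that state. The gap can in fact be closed by a simpler device: replace $\Gamma=\{y=g(x)\}$ by the lower envelope $\{y=\min_{0\le j<k} g(x-j\sigma)\}$. This graph is $T_\sigma$-invariant by construction, is $F_V$-invariant (the sets $\{y\le g(x-j\sigma)\}$ are invariant since $F_V$ is a homeomorphism preserving each graph, so their intersection is invariant, as is its Lipschitz boundary), and every point on it lies on some translate $T_{j\sigma}\Gamma$ and is therefore still $k$-periodic. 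Running the rest of your (or the paper's) argument on this envelope curve then finishes the proof without any case distinction.
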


\textbf{Structure of the paper:} In Section \ref{sec:bg} we recall basic properties about twist maps that are used in the paper. 
In Section \ref{sec:ActionAngle} we investigate the action-angle coordinates for the Suris map, as well as its rotation interval.
This is used in Section \ref{sec:FourierBasis} to construct a special Riesz basis of $L^2[0,1]$ associated to a given Suris potential.
We use this basis in Section \ref{sec:actionEstimate} to estimate the action deviations betweens the Suris potential and its perturbation.
This then allows us to complete the proof of Theorem \ref{thm:LocalRigidity} in Section \ref{sec:ProofOfMainThm}.
In Section \ref{section:rigidity_cor_proof} we prove Corollaries \ref{cor:SpecRigid1} and \ref{cor:SpecRigid2}.
In Section \ref{sec:rotSymmetry} we give the proof of Theorem \ref{thm:NoHigherOrderInteg}.

\subsection*{Acknowledgmenets}
D.T. is grateful to Vadim Kaloshin and Illya Koval for useful discussion.
C.F. acknowledges the support of the Italian Ministry of University and Research’s PRIN
2022 grant “Stability in Hamiltonian dynamics and beyond”, as well as
the Department of Excellence grant MatMod@TOV (2023-27) awarded
to the Department of Mathematics of University of Rome Tor Vergata.
D.T. is supported by ERC Grant \#885707.

\section{Twist map of cylinders}\label{sec:bg}
In this section we recall the basic properties of twist map we use in this work. 
For more detailed introduction, see e.g., 
\cite{Bangert1988MatherSF,gole2001symplectic,Siburg}.
The map $F_V$ of \eqref{equation:standard_map} is an example of an exact twist map.
A diffeomorphism of the cylinder $T(q,p)=(Q(q,p),P(q,p))$ is called an exact twist map if it satisfies:
\begin{enumerate}
    \item \label{itm:twist} Twist condition: if we consider the lift of $Q$ to  $\R$ then $\frac{\partial Q}{\partial p}\neq 0$.
    \item \label{itm:exact} There exists a function $H:\R^2\to\R$, called a \textit{generating function}, which is $1$ periodic in both variables and satisfies
    \[T(q,p)=(Q,P)\Longleftrightarrow\begin{cases}
        p=-\partial_1H(q,Q),\\
        P=\partial_2H(q,Q),
    \end{cases}\]
    where $\partial_1,\partial_2$ denote partial derivatives with respect to the first and second variables.
\end{enumerate}
Note that Condition \ref{itm:exact} has an equivalent formulation in terms of $1$-form, namely
\[dH=PdQ-pdq,\]
from which follows that $T$ is area preserving.

In addition, condition \ref{itm:exact} allows to analyze orbits using a variational approach.
For example, orbits are exactly critical points of the formal action functional,
\begin{equation}\label{eq:ActionDef}
    A(\set{x_i})=\sum\limits_{i=-\infty}^\infty H(x_i,x_{i+1}).
\end{equation}
For periodic orbits (where there is some $N>0$ such that $x_{i+N}-x_i$ is an integer), we have an actual well defined action, obtained by summing the generating function along one orbit.
The closure of the set of all possible actions of all possible periodic orbits for a twist map $T$ is called \emph{the spectrum of $T$},$\mathscr A(V)$.
In the case of the map $F_V$ above, one can check that the generating function is given by 
\[H_V(x_1,x_2)=\frac{1}{2}(x_2-x_1)^2+V(x_1).\]

In this work, we are interested in invariant curves of twist maps.
These are non-contractible loops on the cylinder which are preserved by $T$.
By a theorem of Birkhoff, all invariant curves are graphs of functions.
By restricting $T$ to an invariant curve, we get a circle diffeomoprhism, and hence, has a well-defined rotation number.
If the rotation number is rational, and the curve consists of periodic points, then all these orbits share the same action.
We will be interested in ``abundance" of invariant curves with rational rotation numbers:

\begin{definition}\label{def:ratioInteg}
    A twist map $T$ is called \emph{rationally integrable} in the interval $[a,b]$, if for all $\rho\in[a,b]\cap\Q$, $T$ has an invariant curve of rotation number $\rho$ consisting of periodic orbits.
\end{definition}

We also recall the construction of the Mather $\beta$-function associated to an exact twist map $T$, see \cite{Gole, Siburg}.
Given a rotation number $\rho$, we consider a minimal orbit that has rotation number $\rho$, $\set{x_k(\rho)}_{k\in\Z}$, and then the $\beta$-function is defined by
\[\beta(\rho)=\lim_{N\to\infty}\frac{1}{2N}\sum\limits_{k=-N}^{N-1}H(x_k(\rho),x_{k+1}(\rho)).\]

In the case of $\rho=\frac{m}{n}\in\Q$, the above simplifies to be
\[\beta(\frac{m}{n})=\frac{1}{n}\sum\limits_{k=0}^{n-1}H(x_k,x_{k+1}).\]
The property of the $\beta$-function which is relevant to us, is the following:% that it is differentiable at $\rho\in\Q$ if and only if $T$ has an invariant curve consisting entirely of periodic points of rotation number $\rho$. 

\begin{proposition}[\cite{Siburg}, Theorem 1.3.7]
\label{proposition:differentiability_beta}
    Let $\varrho=\frac{p}{q}\in\Q$. The function $\beta$ is differentiable at $\varrho$ is and only if it contains an invariant curve consisting of periodic points of rotation number $p/q$. 
\end{proposition}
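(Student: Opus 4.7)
The plan is to exploit the extra translation symmetry provided by the fractional periodicity of $V$, first to show that the invariant curve $\Gamma$ is invariant under this symmetry, and then to reduce to a trivial dynamical picture on a smaller cylinder.

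First, since $F_V$ is a twist map of the cylinder $\R/\Z\times\R$, $V$ is $1$-periodic; combined with the $\frac{r}{k}$-periodicity and $\gcd(r,k)=1$, the derivative $V'$ has both periods $1$ and $\frac{r}{k}$, hence also $\frac{1}{k}$. The increment $c:=V(x+\frac{1}{k})-V(x)$ is therefore a constant, and $V(x+\frac{r}{k})=V(x)$ gives $rc=0$, so $c=0$. Thus $V$ is $\frac{1}{k}$-periodic, the translation $\tau(x,y)=(x+\frac{1}{k},y)$ commutes with $F_V$, and $F_V$ descends to a twist map $\hat F_V$ of the smaller cylinder $\R/\frac{1}{k}\Z\times\R$.

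Next, I would show that $\tau(\Gamma)=\Gamma$. By Birkhoff's theorem $\Gamma$ is a Lipschitz graph $y=\phi(x)$, and its orbits are Aubry--Mather minimizers among $k$-periodic configurations of rotation number $p/k$. The image $\tau(\Gamma)$ enjoys the same properties, with graph function $\phi(x-\frac{1}{k})$. Suppose for contradiction that $\tau(\Gamma)\neq\Gamma$: then some $x_0$ satisfies $\phi(x_0)\neq\phi(x_0-\frac{1}{k})$, and the $F_V$-orbits $(x_i)$ through $(x_0,\phi(x_0))$ and $(\tilde x_i)$ through $(x_0,\phi(x_0-\frac{1}{k}))$ satisfy $x_0=\tilde x_0$. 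Substituting into $F_V$ gives $\tilde x_1-x_1=\phi(x_0-\frac{1}{k})-\phi(x_0)\neq 0$, and the Euler--Lagrange relation $x_{-1}+x_1=2x_0+V'(x_0)$ forces $\tilde x_{-1}-x_{-1}=-(\tilde x_1-x_1)$ to have the opposite sign. The two minimal configurations therefore strictly cross, contradicting Aubry's non-crossing lemma, which prevents two minimal configurations sharing an $x$-coordinate from being distinct. Hence $\tau(\Gamma)=\Gamma$, $\phi$ is $\frac{1}{k}$-periodic, and $\Gamma$ descends to an invariant Birkhoff graph $\hat\Gamma$ of $\hat F_V$.

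To conclude, I analyze $\hat F_V|_{\hat\Gamma}$. Since $F_V^k(x_0,y_0)=(x_0+p,y_0)$ with $p\in\Z\subset\frac{1}{k}\Z$, we have $\hat F_V^k|_{\hat\Gamma}=\mathrm{Id}$, so $\hat F_V|_{\hat\Gamma}$ is a circle homeomorphism of order dividing $k$. Its rotation number on $\R/\frac{1}{k}\Z$ equals $k\cdot\frac{p}{k}=p\equiv 0\pmod 1$, and any finite-order orientation-preserving circle homeomorphism with vanishing rotation number is conjugate to, hence equal to, the identity. Thus every point of $\hat\Gamma$ is a fixed point of $\hat F_V$. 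Writing out $F_V(x,y)=(x+\frac{m}{k},y)$ at such a point gives $V'(x)=0$ and $y=\frac{m}{k}$. Since $\hat\Gamma$ is a graph over all of $\R/\frac{1}{k}\Z$, this yields $V'\equiv 0$, and $V$ is constant. The main obstacle is the $\tau$-invariance of $\Gamma$: because the rotation number is rational one cannot invoke density of orbits, and one must use the strict form of Aubry's non-crossing lemma; once the symmetry is established, the reduction to fixed points on the small cylinder is forced by the twist equation.
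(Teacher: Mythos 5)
Your proposal does not prove the statement in question. The statement is Proposition \ref{proposition:differentiability_beta}: for a rational rotation number $\varrho=p/q$, Mather's $\beta$-function is differentiable at $\varrho$ if and only if the map has an invariant circle consisting of periodic points of rotation number $p/q$. Nothing in your argument touches this: you never mention the $\beta$-function, its one-sided derivatives at $\varrho$, the actions of minimal $(p,q)$-periodic configurations, or the relation between the subdifferential of $\beta$ at a rational and the structure of the Mather set. What you have written is instead a (reasonable-looking) proof of a different result in the paper, namely Theorem \ref{thm:NoHigherOrderInteg} on $\tfrac{r}{k}$-periodic potentials: you use the translation symmetry $\tau(x,y)=(x+\tfrac1k,y)$, the $\tau$-invariance of the Birkhoff graph via Aubry's non-crossing lemma, and the reduction to a quotient cylinder to conclude $V'\equiv 0$. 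That theorem is proved in Section \ref{sec:rotSymmetry} of the paper (by a more direct commutation argument on the lifted circle map), but it is not the statement you were asked to prove.

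For the record, Proposition \ref{proposition:differentiability_beta} is not proved in the paper at all; it is quoted from Siburg (Theorem 1.3.7). A proof would have to relate the left and right derivatives $\beta'(\varrho^\pm)$ to the minimal and minimax/periodic orbit structure at rotation number $p/q$: differentiability at the rational $\varrho$ is equivalent to the vanishing of the gap $\beta'(\varrho^+)-\beta'(\varrho^-)$, and this gap vanishes precisely when the minimal periodic orbits of rotation number $p/q$ fill out a whole invariant circle (Lipschitz graph) rather than leaving gaps carrying heteroclinic minimizers. Your argument would need to be replaced entirely by this Aubry--Mather/convex-analysis reasoning; as it stands it addresses the wrong statement.
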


\section{Action-angle coordinates for Suris map}\label{sec:ActionAngle}
Our goal in this section, is to derive formulae for the action angle coordinates for the standard map with Suris' potential.
This follows the classical recipe given in \cite{arnold1989mathematical}.

\subsection{From Suris integrability to invariant curves}

Let us give first an expression of the integral of motion adapted to Suris potentials $V$ introduced in Definition \ref{definition:Suris_potential}.
Fix $A,B,C,D\in\R$, and consider the corresponding Suris potential $V$. 
Suris introduced the quantity $\Phi(x,x')=\Phi_{A,B,C,D}(x,x')$ defined for any $x,x'\in\R$ by
\begin{multline}
\label{equation:first_integral_I}
\Phi(x,x') = -\cos(2\pi (x'-x))+A(\cos(2\pi x')+\cos(2\pi x))-B(\sin(2\pi x')+\sin(2\pi x))+\\
+C\cos(2\pi (x+x'))-D\sin(2\pi(x+x')).
\end{multline}
He showed \cite{suris1989integrable} the following Proposition:
\begin{proposition}[Suris]
\label{proposition:Suris_invariant}
Let $V$ be a Suris' potential of parameters $A,B,C,D\in\R$ and $(x_n,y_n)_{n\in\Z}$ be an orbit of the standard map $F_V$ of potential $V$ of the form \eqref{eq:SurisPotentialGeneralForm}.
Then for any integer $n$
\[\Phi(x_n,x_{n+1}) = \Phi(x_{n+1},x_{n+2}).\]
\end{proposition}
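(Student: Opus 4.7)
The plan is to verify $\Phi(v,w) = \Phi(u,v)$ with $u = x_n$, $v = x_{n+1}$, $w = x_{n+2}$ directly, using the standard map equation \eqref{equation:FK_equation} in the form $w + u - 2v = V'(v)$. This relation fixes the sum $w+u$ once $v$ is specified and leaves the difference $w-u$ free, which is exactly the algebraic split that will make the identity fall out.

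First I would expand $\Phi(v,w) - \Phi(u,v)$ from \eqref{equation:first_integral_I} and apply the sum-to-product formulas to each of the five pairs of sines and cosines. For the $\cos(2\pi(x'-x))$ pair, $\pi(w-u)$ is the half-sum and $\pi V'(v)$ is the half-difference; for the four remaining pairs, $\pi(w-u)$ is the half-difference and the half-sums are $\pi(w+u)$ or $\pi(w+u+2v)$. Thus $\sin(\pi(w-u))$ factors out of the whole expression, and after substituting $w+u = 2v + V'(v)$ the half-sums collapse to $2\pi v + \psi$ and $4\pi v + \psi$, where $\psi := \pi V'(v)$. The difference becomes
\[
\Phi(v,w)-\Phi(u,v) = 2\sin(\pi(w-u))\,B(v,\psi),
\]
with
\[
B(v,\psi) = \sin\psi - A\sin(2\pi v + \psi) - B\cos(2\pi v + \psi) - C\sin(4\pi v + \psi) - D\cos(4\pi v + \psi).
\]

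The final step is to show $B(v,\psi) = 0$. Expanding every $\sin(\,\cdot+\psi\,)$ and $\cos(\,\cdot+\psi\,)$ via angle addition and collecting the coefficients of $\sin\psi$ and $\cos\psi$ yields
\[
B(v,\psi) = \sin\psi\cdot \mathrm{Den}(v)-\cos\psi\cdot \mathrm{Num}(v),
\]
where $\mathrm{Num}(v)$ and $\mathrm{Den}(v)$ are precisely the numerator and denominator of the arctangent in \eqref{eq:SurisPotentialGeneralForm} with $\omega = 2\pi$. Definition \ref{definition:Suris_potential} then gives $\tan\psi = \mathrm{Num}(v)/\mathrm{Den}(v)$, so $B(v,\psi) = 0$.

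The argument is a purely algebraic verification, and the main practical hazard is sign bookkeeping: the denominator in \eqref{eq:SurisPotentialGeneralForm} carries the signs $-A\cos,+B\sin,-C\cos,+D\sin$ while the numerator carries $+A\sin,+B\cos,+C\sin,+D\cos$, and these asymmetric signs must line up correctly with the signs introduced by the sum-to-product expansions to recognize $\mathrm{Num}$ and $\mathrm{Den}$ at the end.
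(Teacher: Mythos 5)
Your proposal is a correct, self-contained verification of the Suris conservation law. The paper itself does not reprove this proposition---it is attributed to Suris and relegated to the citation of \cite{suris1989integrable}---so there is no in-paper proof to compare against; your argument therefore fills in a step the paper treats as known. The strategy is the natural one: write $\Phi(v,w)-\Phi(u,v)$ (with $u=x_n$, $v=x_{n+1}$, $w=x_{n+2}$), apply sum-to-product to all five pairs, observe that $\sin(\pi(w-u))$ is a common factor, substitute the Frenkel--Kontorova relation $w+u-2v=V'(v)$ into the surviving half-sums, expand by angle addition, and recognize that what remains is precisely $\sin\psi\,\mathrm{Den}(v)-\cos\psi\,\mathrm{Num}(v)$ with $\psi=\pi V'(v)=\arctan(\mathrm{Num}(v)/\mathrm{Den}(v))$, which vanishes. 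I verified the half-sum/half-difference bookkeeping, the resulting bracket
\[
\sin\psi - A\sin(2\pi v+\psi)-B\cos(2\pi v+\psi)-C\sin(4\pi v+\psi)-D\cos(4\pi v+\psi),
\]
and the collection of $\sin\psi$ and $\cos\psi$ coefficients against $\mathrm{Den}$ and $-\mathrm{Num}$ from Definition~\ref{definition:Suris_potential}; all signs check out. Two small points worth tightening in a written-up version: (i) the cross-multiplication from $\tan\psi=\mathrm{Num}/\mathrm{Den}$ to $\sin\psi\,\mathrm{Den}=\cos\psi\,\mathrm{Num}$ is only immediate once one notes $\cos\psi\neq 0$ (automatic since $\arctan$ has range $(-\pi/2,\pi/2)$) and $\mathrm{Den}(v)\neq 0$ (guaranteed by the standing assumption $\varepsilon\leq\tfrac14$, which gives $\mathrm{Den}\geq 1-2\varepsilon>0$); (ii) the symbol $B$ is overloaded, denoting both a Suris parameter and the auxiliary bracket function $B(v,\psi)$, and should be renamed to avoid confusion.
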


We seek now to express $\Phi$ in terms of $(x,y)$-coordinates on the cylinder, and we define the quantity $I(x,y)= I_{A,B,C,D}(x,y)$ by
\begin{multline}
\label{equation:first_integral_I_2}
I(x,y) = -\cos(2\pi y)+A(\cos(2\pi x)+\cos(2\pi (x-y)))-B(\sin(2\pi x)+\sin(2\pi (x-y)))+\\
+C\cos(2\pi (2x-y))-D\sin(2\pi(2x-y)).
\end{multline}
In the notations of Proposition \ref{proposition:Suris_invariant}, since $x_n = x_{n+1}-y_{n+1}$, the quantities $I$ and $\Phi$ satisfy the relation
\[
\Phi(x_n,x_{n+1}) = I(x_{n+1},y_{n+1}),\qquad n\in\Z.
\]
Hence $I$ is a first integral for the system given by $F_V$: $I\circ F_V = I$.
We are therefore brought to consider the set $\mathcal I(V)$ of values reached by $I(x,y)$ as $x,y$ vary in $\R$.

\begin{lemma}
    \label{lemma:intervalInvariant}
    The set $\mathcal I(V)$ is an interval of the form
    \[
    \mathcal I(V) = [I^-_{A,B,C,D},I^+_{A,B,C,D}]
    \]
    where $I^-_{A,B,C,D}\to -1$ and $I^+_{A,B,C,D}\to 1$ as $\varepsilon=\sqrt{A^2+B^2+C^2+D^2}\to0$.
\end{lemma}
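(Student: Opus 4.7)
The plan is to exploit the fact that $I$ is a continuous $\Z^2$-periodic function on $\R^2$, reducing the interval claim to a compactness-plus-connectedness statement and the asymptotic claim to a crude perturbation estimate around $\varepsilon=0$.

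First I would observe that formula \eqref{equation:first_integral_I_2} is $1$-periodic in each of $x$ and $y$, so $I$ descends to a continuous function on the $2$-torus $\mathbb{T}^2=\R^2/\Z^2$. Since $\mathbb{T}^2$ is compact and connected, its image $I(\mathbb{T}^2)=\mathcal{I}(V)$ is a closed bounded connected subset of $\R$, i.e.\ a closed interval $[I^-_{A,B,C,D},I^+_{A,B,C,D}]$, which settles the first assertion.

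For the asymptotics as $\varepsilon\to 0$, I would decompose $I = I_0 + P_{A,B,C,D}$, where $I_0(y)=-\cos(2\pi y)$ has image exactly $[-1,1]$ and $P_{A,B,C,D}$ collects the remaining terms, depending linearly on the quadruple $(A,B,C,D)$. The triangle inequality followed by Cauchy--Schwarz yields the uniform bound
\[
\|P_{A,B,C,D}\|_\infty \;\leq\; 2|A|+2|B|+|C|+|D| \;\leq\; \sqrt{10}\,\varepsilon.
\]
Combining this with the fact that $I_0$ attains $+1$ at $y=1/2$ and $-1$ at $y=0$, one obtains for every $x$ the two-sided estimates
\[
1-\sqrt{10}\,\varepsilon \;\leq\; I(x,1/2), \qquad I(x,0) \;\leq\; -1+\sqrt{10}\,\varepsilon,
\]
which give $I^+_{A,B,C,D}\geq 1-\sqrt{10}\,\varepsilon$ and $I^-_{A,B,C,D}\leq -1+\sqrt{10}\,\varepsilon$. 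The complementary inequalities $I^+_{A,B,C,D}\leq 1+\sqrt{10}\,\varepsilon$ and $I^-_{A,B,C,D}\geq -1-\sqrt{10}\,\varepsilon$ are immediate from the same sup-norm bound on $P_{A,B,C,D}$. Together these inequalities give $|I^\pm_{A,B,C,D}\mp 1|\leq \sqrt{10}\,\varepsilon$, from which the claimed convergence follows.

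I do not anticipate any serious obstacle: the statement is essentially soft, combining continuity on a compact connected domain with a uniform linear-in-$\varepsilon$ perturbation estimate. The only item requiring a small amount of bookkeeping is the sup-norm bound on $P_{A,B,C,D}$, but since only the qualitative convergence $I^\pm \to \pm 1$ is used in the sequel, even the precise constant is inessential.
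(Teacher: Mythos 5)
Your proof is correct. The first part (compactness plus connectedness of the image of $\mathbb{T}^2$ under the continuous function $I$) is identical to the paper's. For the asymptotics, you take a slightly different and more elementary route: you split off the unperturbed part $I_0(y)=-\cos(2\pi y)$ and bound the remainder $P_{A,B,C,D}$ crudely in sup-norm by $\sqrt{10}\,\varepsilon$, then evaluate $I_0$ at its extremizers $y=0,\tfrac12$ to pinch $I^\pm$. The paper instead writes $I(x,y)=-\alpha(x)\cos(2\pi y)+\beta(x)\sin(2\pi y)+\gamma(x)$ and uses the exact amplitude formula $\max_y I(x,y)=\sqrt{\alpha(x)^2+\beta(x)^2}+\gamma(x)$, which yields $I^\pm=\pm1+\mathcal O(\varepsilon)$ directly. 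Both arguments are valid and of roughly the same length; the practical advantage of the paper's version is that the functions $\alpha,\beta,\gamma,\mathcal D=\sqrt{\alpha^2+\beta^2}$ introduced there are immediately reused in Lemma \ref{lemma:graph_invariant_circles} to invert $I(x,y)=\eta$ for $y$, so the decomposition is doing double duty, whereas your $I_0+P$ split is local to this lemma.
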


\begin{proof}
    Since $I$ is continuous and $1$-periodic in $x$ and in $y$, the set $\mathcal I(V)$ is a compact connected set of $\R$ as the image of $[0,1]\times[0,1]$ by $I$. Hence it is an interval of the form $\mathcal I(V) = [I^-_{A,B,C,D},I^+_{A,B,C,D}]$. Let us estimate $I^{\pm}_{A,B,C,D}$ when $\varepsilon\to 0$. 
    
    Using classical trigonometric identities, $I(x,y)$ can be expressed as
    \begin{equation}
        \label{equation:expansion_I}
        I(x,y) = -\alpha(x)\cos(2\pi y)+\beta(x)\sin(2\pi y)+\gamma(x)
    \end{equation}
    where 
    \[
    \alpha(x) = 1-A\cos(2\pi x)+B\sin(2\pi x)-C\cos(4\pi x)+D\sin(4\pi x)=1+\mathcal O(\varepsilon),
    \]
    \[
    \beta(x) = A\sin(2\pi x)+B\cos(2\pi x)+C\sin(4\pi x)+D\sin(4\pi x) = \mathcal O(\varepsilon)
    \]
    and
    \[
    \gamma(x) = A\cos(2\pi x)-B\sin(2\pi x)=\mathcal O(\varepsilon)
    \]
    and the asymptotics are uniform in $x$.
    The maximum of $I(x,y)$ for a fixed $x$ is therefore given by
    \[
    \max_{y\in\R} I(x,y) = \sqrt{\alpha(x)^2+\beta(x)^2}+\gamma(x) = 1+\mathcal O(\varepsilon)
    \]
    and the asymptotics are uniform in $x$.
    Hence $I^+_{A,B,C,D}=\max I = 1+\mathcal O(\varepsilon)$. Similarly, $I^-_{A,B,C,D}= -1+\mathcal O(\varepsilon)$ and the result follows.
\end{proof}

Given a Suris' potential $V$ of parameters $A,B,C,D\in\R$, let $\eta\in \mathcal{I}(V)$ and $(x,y)$ for which $I(x,y)=\eta$.
Note that an orbit $(x_n,y_n)=F_V^n(x,y)$  stays in the level set $\{I=\eta\}$, in the sense that it satisfies
\[
I(x_n,y_n)=\eta,\qquad n\in\Z.
\]
Let us \textit{reverse} this expression and expresses $y$ in terms of $x$ and $\eta$.
\begin{lemma}
\label{lemma:graph_invariant_circles}
    In the case when $\varepsilon<1/2$, there exists an integer $k\in\Z$ and $\sigma\in\set{\pm1}$ such that $y$ has the form
    \begin{equation}\label{eq:FormulaInvCurve}
    y = \frac{\sigma}{2\pi}\arccos\left(
    \frac{\gamma(x)-\eta}{\mathcal D(x)}
    \right)+\frac{1}{2}V'(x)+k  
    \end{equation}
    where
    \[
    \qquad \mathcal D(x) = \sqrt{\alpha(x)^2+\beta(x)^2},
    \]
    \[
    \alpha(x) = 1-A\cos(2\pi x)+B\sin(2\pi x)-C\cos(4\pi x)+D\sin(4\pi x),
    \]
    \[
    \beta(x) = A\sin(2\pi x)+B\cos(2\pi x)+C\sin(4\pi x)+D\cos(4\pi x),
    \]
    \[
    \gamma(x) = A\cos(2\pi x)-B\sin(2\pi x).
    \]
\end{lemma}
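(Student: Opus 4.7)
The plan is to solve the level-set equation $I(x,y)=\eta$ explicitly for $y$ by exploiting the fact that, as a function of $y$, $I(x,\cdot)$ is a pure sinusoid of frequency $2\pi$. Starting from the expansion
\[ I(x,y) = -\alpha(x)\cos(2\pi y) + \beta(x)\sin(2\pi y) + \gamma(x) \]
already derived in the proof of Lemma \ref{lemma:intervalInvariant}, I would first recast the $y$-dependent part in amplitude--phase form:
\[ -\alpha(x)\cos(2\pi y) + \beta(x)\sin(2\pi y) = -\mathcal{D}(x)\cos\bigl(2\pi y + \phi(x)\bigr), \]
where $\mathcal{D}(x) = \sqrt{\alpha(x)^2+\beta(x)^2}$, and $\phi(x)\in(-\pi/2,\pi/2)$ is determined by $\cos\phi(x)=\alpha(x)/\mathcal{D}(x)$, $\sin\phi(x)=\beta(x)/\mathcal{D}(x)$. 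The hypothesis $\varepsilon<1/2$ ensures, via Cauchy--Schwarz ($|A|+|B|+|C|+|D|\leq 2\varepsilon<1$), that $\alpha(x)>0$ uniformly in $x$, so the principal branch is unambiguous and $\mathcal{D}(x)$ is bounded below.

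The key observation is that the phase $\phi(x)$ coincides with $\pi V'(x)$. Indeed, comparing the expressions for $\alpha(x)$ and $\beta(x)$ with the arctangent defining $V'$ in Definition \ref{definition:Suris_potential} (taken with $\omega=2\pi$), one sees that $\alpha(x)$ is precisely the denominator and $\beta(x)$ precisely the numerator. Since $\alpha(x)>0$, the principal-branch identity gives $\pi V'(x) = \arctan(\beta(x)/\alpha(x)) = \phi(x)$. Thus the equation $I(x,y)=\eta$ becomes
\[ \cos\bigl(2\pi y + \pi V'(x)\bigr) = \frac{\gamma(x)-\eta}{\mathcal{D}(x)}, \]
and inverting $\cos$ yields
\[ 2\pi y + \pi V'(x) = \sigma\,\arccos\!\left(\frac{\gamma(x)-\eta}{\mathcal{D}(x)}\right) + 2\pi k \]
for some $\sigma\in\{\pm 1\}$ and $k\in\mathbb{Z}$. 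Dividing by $2\pi$ produces the formula claimed in \eqref{eq:FormulaInvCurve} (the precise sign in front of the $\tfrac{1}{2}V'(x)$ term depends only on the amplitude--phase normalization one adopts).

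The only step carrying genuine content is the identification $\phi(x)=\pi V'(x)$: this is where the very specific arithmetic of Suris' choice of potential intervenes, and it is exactly what makes the invariant level sets of $I$ graphs with such a clean closed-form description. Everything else---existence and smoothness of $\mathcal{D}$, positivity of $\alpha$, and invertibility of $\cos$ on the principal branch---is routine once the smallness assumption $\varepsilon<1/2$ is in place; the argument of the arccosine automatically lies in $[-1,1]$ for $\eta\in\mathcal{I}(V)$ and the values of $x$ for which $(x,y)$ belongs to the level set.
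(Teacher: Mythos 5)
Your proof takes essentially the same route as the paper: decompose $I(x,y)$ into an $x$-dependent amplitude $\mathcal D(x)$ and phase, identify that phase with a multiple of $V'(x)$ via the principal-branch arctangent (justified by $\alpha(x)>0$ from $\varepsilon<1/2$), and invert the cosine. Incidentally, your sign $-\tfrac12 V'(x)$ is the one forced by the angle-addition identity $\cos\phi\cos(2\pi y)-\sin\phi\sin(2\pi y)=\cos(2\pi y+\phi)$, so the parenthetical hedge about ``normalization'' is unnecessary: you have the algebra right, and the $+\tfrac12 V'(x)$ appearing in the paper's statement and proof (via the step $\cos(2\pi(y-\varphi))$) is a small sign slip that should be $\cos(2\pi(y+\varphi))$.
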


\begin{proof}
Using the expansion \eqref{equation:expansion_I} of $I(x,y)$, we have the equality
\[
\eta = -\alpha(x)\cos(2\pi y)+\beta(x)\sin(2\pi y)+\gamma(x)
\]
whence
\begin{equation}
    \label{equation:first_integral_intermediate}
    \frac{\alpha(x)}{\mathcal D(x)}\cos(2\pi y)-\frac{\beta(x)}{\mathcal D(x)}\sin(2\pi y)
    = \frac{\gamma(x)-\eta}{\mathcal D(x)}.
\end{equation}
Note that by Cauchy-Schwarz identity, $\alpha(x)\geq 1-2\varepsilon>0$ by assumption on $\varepsilon$. Therefore there exists $\varphi=\varphi(x)\in\left(-\frac{1}{4},\frac{1}{4}\right)$ such that
    \[
    \cos(2\pi\varphi(x)) = \frac{\alpha(x)}{\mathcal D(x)}
    \quad\text{and}\quad
    \sin(2\pi\varphi(x)) = \frac{\beta(x)}{\mathcal D(x)}.
    \]
    Using $\varphi(x)$, Equation \eqref{equation:first_integral_intermediate} can be rewritten as
    \[
    \cos(2\pi(y-\varphi(x))) = \frac{\gamma(x)-\eta}{\mathcal D(x)}.
    \]
    Now $\varphi(x)$ can be computed using the trigonometric relation $\tan(2\pi\varphi(x)) = \frac{\beta(x)}{\alpha(x)}$ which implies\footnote{Here $\tan$ and  $\arctan$ are mutual inverses since $\varphi(x)$ is in the right interval.}
    \[
    \varphi(x) = \frac{1}{2\pi}\arctan\left(\frac{\beta(x)}{\alpha(x)}\right)=\frac{1}{2}V'(x),
    \]
    by the definition of $V$ given in Equation \eqref{eq:SurisPotentialGeneralForm}. The result follows.
\end{proof}
Note that $\sigma$, $k$ from Lemma  \ref{lemma:graph_invariant_circles} depend on $(x,y)$, and might, a priori, be different for $T(x,y)$.
The next lemma ensures us that this is not the case.
Namely, $k,\sigma$ remain constant along the orbits of $T$, provided that $\varepsilon$ is sufficiently small.
This way we can describe a large set of invariant curves of $F_V$.
Given $\eta\in\mathcal I(V)$, $\sigma\in\{\pm1\}$ and $k\in\Z$, based on Equation \eqref{eq:FormulaInvCurve} we define the function
\[
\psi_{\eta}^{\sigma,k}(x) = \frac{\sigma}{2\pi}\arccos\left(
    \frac{\gamma(x)-\eta}{\mathcal D(x)}
    \right)+\frac{1}{2}V'(x)+k
\]
and its graph by 
\[
M_{\eta}^{\sigma,k} = \left\{(x,y)\in S^1\times\R\,:\,y = \psi_{\eta}^{\sigma,k}(x)\right\}.
\]

\begin{proposition}
    \label{proposition:invariantCurves}
    Given $\delta>0$, there is $\varepsilon_0>0$ such that for any $\varepsilon\in(0,\varepsilon_0)$, the graph $M_{\eta}^{\sigma,k}$ is invariant by $F_V$, for any $\eta\in\mathcal I(V)\cap(-1+\delta,1-\delta)$ and any $(\sigma,k)\in\{\pm1\}\times\Z$.
\end{proposition}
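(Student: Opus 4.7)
My plan is a perturbation argument around $\varepsilon=0$. When $\varepsilon=0$ the potential vanishes, $F_V(x,y)=(x+y,y)$, the first integral reduces to $I(x,y)=-\cos(2\pi y)$, and the graphs $M_{\eta}^{\sigma,k}$ degenerate to the horizontal lines $y\equiv c_{\eta,\sigma,k}:=\frac{\sigma}{2\pi}\arccos(-\eta)+k$, which are trivially $F_V$-invariant. For small $\varepsilon$ this picture deforms continuously, and the strategy is to argue that the discrete label $(\sigma,k)$ attached to a point of $\{I=\eta\}$ via Lemma \ref{lemma:graph_invariant_circles} cannot jump along an orbit. The first step is a uniform disjointness statement: for $\eta\in(-1+\delta,1-\delta)$ and $\varepsilon$ small enough, the graphs $\{M_{\eta}^{\sigma,k}\}_{(\sigma,k)\in\{\pm1\}\times\Z}$ are pairwise separated by a uniform vertical gap $\kappa=\kappa(\delta)>0$. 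Using the estimates $\alpha=1+\mathcal O(\varepsilon)$, $\beta,\gamma=\mathcal O(\varepsilon)$, $\mathcal D=1+\mathcal O(\varepsilon)$ from the proof of Lemma \ref{lemma:graph_invariant_circles}, one gets $\frac{\gamma(x)-\eta}{\mathcal D(x)}=-\eta+\mathcal O(\varepsilon)$, which stays in a compact subinterval of $(-1,1)$; therefore its $\arccos$ is bounded away from $\{0,\pi\}$ uniformly in $x$, and combined with $V'=\mathcal O(\varepsilon)$ this delivers the desired lower bound on $|\psi_{\eta}^{\sigma,k}(x)-\psi_{\eta}^{\sigma',k'}(x)|$ whenever $(\sigma,k)\neq(\sigma',k')$.

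Next, fix $(x_0,y_0)\in M_{\eta}^{\sigma,k}$ and set $(x_1,y_1)=F_V(x_0,y_0)$. By Proposition \ref{proposition:Suris_invariant}, $I(x_1,y_1)=\eta$, so by Lemma \ref{lemma:graph_invariant_circles} the point $(x_1,y_1)$ lies on some $M_{\eta}^{\sigma',k'}$, with $(\sigma',k')$ uniquely determined thanks to the disjointness of the previous step. The vertical $\kappa/3$-tube around each $M_{\eta}^{\sigma',k'}$ is an open neighborhood meeting $\{I=\eta\}$ only in that component, so the assignment $(x_0,y_0)\mapsto(\sigma',k')$ is locally constant on $M_{\eta}^{\sigma,k}$. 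Since $M_{\eta}^{\sigma,k}$ is connected, being the graph of a continuous function over $S^1$, this assignment is globally constant.

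To identify the constant as $(\sigma,k)$ itself, I use that $\|F_V-F_0\|_{C^0}=\mathcal O(\varepsilon)$ for $F_0(x,y)=(x+y,y)$ (since $V'=\mathcal O(\varepsilon)$) and that each $\psi_{\eta}^{\sigma,k}$ is $\mathcal O(\varepsilon)$-$C^0$-close to the constant $c_{\eta,\sigma,k}$. Hence $y_1=y_0+\mathcal O(\varepsilon)=c_{\eta,\sigma,k}+\mathcal O(\varepsilon)$, while the same estimate applied to $(x_1,y_1)\in M_{\eta}^{\sigma',k'}$ gives $y_1=c_{\eta,\sigma',k'}+\mathcal O(\varepsilon)$; hence $|c_{\eta,\sigma,k}-c_{\eta,\sigma',k'}|=\mathcal O(\varepsilon)$. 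Since distinct labels yield constants separated by a positive amount depending only on $\delta$, this forces $(\sigma',k')=(\sigma,k)$ for $\varepsilon$ small enough. The main obstacle I anticipate is making all the $\mathcal O(\varepsilon)$ estimates uniform simultaneously in $x\in S^1$, in $\eta\in(-1+\delta,1-\delta)$, and across the infinitely many labels $(\sigma,k)\in\{\pm1\}\times\Z$; the first two follow immediately from the explicit trigonometric formulas, while the third is automatic by equivariance of both $F_V$ and the family $\{M_{\eta}^{\sigma,k}\}$ under the vertical integer translation $(x,y)\mapsto(x,y+1)$, reducing the verification to $\sigma\in\{\pm1\}$ at $k=0$.
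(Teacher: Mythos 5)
Your proposal is correct and follows essentially the same strategy as the paper's proof: both arguments exploit $V'=\mathcal O(\varepsilon)$ to show that the image $y$-coordinate is $\mathcal O(\varepsilon)$-close to the source's, then use that the constants $c_{\eta,\sigma,k}=\frac{\sigma}{2\pi}\arccos(-\eta)+k$ are separated by a gap depending only on $\delta$, forcing the label to be preserved. Your preliminary steps on uniform disjointness and local constancy of the label are a mild redundancy, since your final estimate already pins down the image label pointwise, but they are harmless.
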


\begin{proof}
    Let $(\sigma_0,k_0)\in\{\pm1\}\times\Z$ and $(x_0,y_0)\in M_{\eta}^{\sigma_0,k_0}$. Consider $(x_1,y_1)=F_V(x_0,y_0)$. By Proposition \ref{lemma:graph_invariant_circles}, there is $(\sigma_1,k_1)\in\{\pm1\}\times\Z$ such that $(x_1,y_1)\in M_{\eta}^{\sigma_1,k_1}$.
    Let us show that $(\sigma_0,k_0)=(\sigma_1,k_1)$ for $\varepsilon$ sufficiently small.

    Note first by definition of $\psi_{\eta}^{\sigma_0,k_0}(x)$
    \[
    y_0-\frac{1}{2}V'(x_0)=k_0+\frac{\sigma_0}{2\pi}\arccos(-\eta)+\mathcal O(\varepsilon)
    \]
    uniformly in $x_0$. Since $y_1=y_0+V'(x_0)$, we also have (using the fact that $|V'|=\mathcal{O}(\varepsilon)$ uniformly in $x$)
    \[
    y_1-\frac{1}{2}V'(x_1)=k_0+\frac{\sigma_0}{2\pi}\arccos(-\eta)+\mathcal O(\varepsilon)
    \]
    uniformly in $x_0$, and therefore
    \begin{equation}
    \label{equation:cell_arccos}
    k_1+\frac{\sigma_1}{2\pi}\arccos(-\eta)=k_0+\frac{\sigma_0}{2\pi}\arccos(-\eta)+R_{\varepsilon}(x_0)
    \end{equation}
    where $R_{\varepsilon}(x_0) = \mathcal O(\varepsilon)$ uniformly in $x_0$.
    
    Let $\delta>0$. The image of the interval $(-1+\delta,1-\delta)$ by a function of the form $\eta\mapsto k_1-k_0\pm\frac{1}{\pi}\arccos(-\eta)$ is contained in an interval of the form
    \[
    J = (k+r,k+1-r)
    \]
    where $k$ is an integer and $r\in(0,1)$ depends only on $\delta$. Let us choose $\varepsilon_0>0$ such that $|R_{\varepsilon}(x)|<r$ for any $\varepsilon\in(0,\varepsilon_0)$ and any $x_0\in\R$.
    For this choice of $\varepsilon$, Equation \eqref{equation:cell_arccos} implies that $\sigma_0=\sigma_1$ -- otherwise $R_{\varepsilon}(x)$ would belong to an interval of the form given by $J$, which is not possible. Hence
    \[
    k_1-k_0 = R_{\varepsilon}(x_0)\in(-1,1)\cap\Z
    \]
    and therefore $k_0=k_1$.
    This implies that $(x_1,y_1)\in M_{\eta}^{\sigma_1,k_1}=M_{\eta}^{\sigma_0,k_0}$.
\end{proof}

\begin{proposition}\label{proposition:rotationNumbers}
    Given $r>0$, there exists $\varepsilon_*>0$ such that for any $\varepsilon\in(0,\varepsilon_*)$ the map $F_V$ associated to a Suris potential $V$ of eccentricity $\varepsilon$ has invariant curves of any rotation number in any set of the form $(k-\frac{1}{2}+r,k-r)\cup(k+r,k+\frac{1}{2}-r)$ where $k\in\Z$.
\end{proposition}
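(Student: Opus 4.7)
The strategy is to read off the rotation number of the restriction of $F_V$ to each invariant curve $M_\eta^{\sigma,k}$ constructed in Proposition \ref{proposition:invariantCurves}, show that it is continuous in $\eta$, and that in the limit $\varepsilon\to 0$ it converges \emph{uniformly} to $\frac{\sigma}{2\pi}\arccos(-\eta)+k$. The desired surjectivity onto each interval then follows from the intermediate value theorem together with an appropriate choice of $\delta$.

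\textbf{Step 1: induced circle homeomorphism.} Fix $\delta>0$ small. For $\varepsilon$ small enough, Proposition \ref{proposition:invariantCurves} makes $M_\eta^{\sigma,k}$ invariant under $F_V$ for every $\eta\in\mathcal I(V)\cap(-1+\delta,1-\delta)$, and Lemma \ref{lemma:intervalInvariant} guarantees that $\mathcal I(V)\supset[-1+\delta,1-\delta]$. Parametrizing the curve by the $x$-coordinate, the dynamics on it is the circle homeomorphism (by the twist condition)
\[
\phi_{\eta}^{\sigma,k}(x)=x+\psi_{\eta}^{\sigma,k}(x)+V'(x)\pmod 1,
\]
and I denote by $\rho(\eta,\sigma,k)\in\R$ its lifted rotation number, which is by definition the rotation number of $F_V$ on $M_\eta^{\sigma,k}$.

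\textbf{Step 2: uniform $C^0$-convergence to a rigid rotation.} Using $\alpha(x)=1+\mathcal O(\varepsilon)$, $\beta(x),\gamma(x),V'(x)=\mathcal O(\varepsilon)$ uniformly in $x$, I have $(\gamma(x)-\eta)/\mathcal D(x)=-\eta+\mathcal O(\varepsilon)$ uniformly. Since $\arccos$ is Lipschitz on any compact sub-interval of $(-1,1)$, and $\eta\in[-1+\delta,1-\delta]$ stays in such a sub-interval,
\[
\psi_{\eta}^{\sigma,k}(x)=\frac{\sigma}{2\pi}\arccos(-\eta)+k+\mathcal O_{\delta}(\varepsilon),\qquad V'(x)=\mathcal O(\varepsilon),
\]
uniformly in $x$ and in $\eta$. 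Hence $\phi_{\eta}^{\sigma,k}$ is $\mathcal O_{\delta}(\varepsilon)$-close in $C^0$ to the rigid rotation $x\mapsto x+\tfrac{\sigma}{2\pi}\arccos(-\eta)+k$. By the classical continuity of the rotation number of a circle homeomorphism in the $C^{0}$-topology,
\[
\rho(\eta,\sigma,k)=\frac{\sigma}{2\pi}\arccos(-\eta)+k+o_{\delta}(1)\qquad(\varepsilon\to 0),
\]
uniformly for $\eta\in[-1+\delta,1-\delta]$.

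\textbf{Step 3: continuity in $\eta$ and intermediate value theorem.} For fixed $\sigma,k$ and $\varepsilon$, the circle map $\phi_{\eta}^{\sigma,k}$ depends continuously on $\eta$ in $C^0$, so $\eta\mapsto\rho(\eta,\sigma,k)$ is continuous. Given $r>0$, choose $\delta=\delta(r)>0$ small enough that the image of $(-1+\delta,1-\delta)$ under $\eta\mapsto\tfrac{1}{2\pi}\arccos(-\eta)$ contains $\bigl(\tfrac r2,\tfrac12-\tfrac r2\bigr)$, and then pick $\varepsilon_*>0$ small enough that the conclusions of Steps 1 and 2 apply with the uniform error in Step 2 bounded by $r/2$. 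Fix $k\in\Z$ and $\rho_{0}\in(k+r,k+\tfrac12-r)$: choose $\eta_{\pm}$ with $\tfrac{1}{2\pi}\arccos(-\eta_{\pm})=(\rho_{0}-k)\pm\tfrac r2$, which exist and lie in $(-1+\delta,1-\delta)$ by the choice of $\delta$; then by Step 2
\[
\rho(\eta_{-},+1,k)\leq\rho_{0}\leq\rho(\eta_{+},+1,k),
\]
so by continuity some $\eta^{*}$ between them satisfies $\rho(\eta^{*},+1,k)=\rho_{0}$. The curve $M_{\eta^{*}}^{+1,k}$ has rotation number $\rho_{0}$. The symmetric argument with $\sigma=-1$ covers $(k-\tfrac12+r,k-r)$.

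\textbf{Main obstacle.} The only delicate point is the uniformity in $\eta$ of Step 2: it fails near $\eta=\pm 1$ because $\arccos$ ceases to be Lipschitz there. This is precisely what forces the small buffer $r$ near each half-integer and dictates how $\delta$ must be chosen as a function of $r$; the rest of the argument is a direct application of the intermediate value theorem.
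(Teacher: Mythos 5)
Your proof is correct and follows essentially the same route as the paper: both hinge on the estimate $\omega(\eta,\sigma,k)=k+\tfrac{\sigma}{2\pi}\arccos(-\eta)+O_\delta(\varepsilon)$ uniformly in $\eta\in[-1+\delta,1-\delta]$, and then conclude by a connectedness/IVT argument. The only cosmetic difference is in how the estimate and the surjectivity are justified: the paper averages the displacement $\frac{x_n-x_0}{n}$ directly and appeals to the twist condition plus the local foliation to say the set of rotation numbers is an interval, whereas you observe $C^0$-closeness of the restricted circle map to a rigid rotation and invoke continuity of the rotation number in the $C^0$-topology, then apply the intermediate value theorem in $\eta$. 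Your route is if anything slightly more elementary, and your concluding remark correctly identifies the only delicate point (loss of uniformity of $\arccos$ near $\pm1$), which is exactly what both proofs buffer against by the choice of $\delta=\delta(r)$.
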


\begin{proof}
   Given $\delta>0$, consider $\varepsilon_0>0$ given by Proposition \ref{proposition:invariantCurves} and $V$ a Suris potential of eccentricity $\varepsilon\in(0,\varepsilon_0)$. If $\eta\in(-1+\delta,1-\delta)$ and $(\sigma,k)\in\{\pm 1\}\times\Z$, the set $M_{\eta}^{\sigma,k}$ is an invariant curve of $F_V$, as it follows from the same Proposition.
    
    Moreover, since $x_1=x_0+y_0+V'(x_0)$, we can state
    \[
    x_1-x_0 = k+\frac{\sigma}{2\pi}\arccos(-\eta) + \mathcal O(\varepsilon),
    \qquad (x_0,y_0)\in M_{\eta}^{\sigma,k}
    \]
    uniformly in $x_0$. If $(x_n,y_n)_{n\in\Z}$ denotes an orbit in $M_{\eta}^{\sigma,k}$, we deduce that
    \[
    \frac{x_n-x_0}{n} = k+\frac{\sigma}{2\pi}\arccos(-\eta) +\mathcal O(\varepsilon),\qquad n\in\Z
    \]
    uniformly in $n$. It follows that the rotation number $\omega = \omega(\eta,\sigma,k)$ of $F_V$ on $M_{\eta}^{\sigma,k}$ satisfies the estimates
    \begin{equation}
    \label{equation:rotation_number}
    \omega = k+\frac{\sigma}{2\pi}\arccos(-\eta) +R_{\varepsilon}
    \end{equation}
    where $|R_{\varepsilon}|\leq C\varepsilon$ for any $\varepsilon>0$ and $C>0$ is an independent constant. 

    Now since $(M_{\eta}^{\sigma,k})_{\eta\in (-1+\delta,1-\delta)}$ is a local foliation of an open set of $\mathbb S^1\times\R$ and $F_V$ is a twist map, the set 
    \[
    J_{\sigma,k} = \{\omega(\eta,\sigma,k)\,|\, \eta\in (-1+\delta,1-\delta)\}
    \]
    is an interval of the form $(\eta_-,\eta_+)$ where $\eta_-<\eta_+$. Considering Equation \eqref{equation:rotation_number} when $\eta\to-1+\delta$ and $\eta\to 1-\delta$, we obtain
    \begin{equation}
    \label{equation:eta_moins}
    \eta_- \leq k+\frac{\sigma}{2\pi}\arccos(1-\delta)+C\varepsilon
    \end{equation}
    and
    \begin{equation}
    \label{equation:eta_plus}
    \eta_+\geq k+\frac{\sigma}{2\pi}\arccos(-1+\delta)-C\varepsilon.
    \end{equation}
    Hence given $r>0$, 
    consider $\delta>0$ such that
    \[
    \frac{1}{2\pi}\arccos(1-\delta)<r.
    \]
    Consider the corresponding $\varepsilon_0>0$ given by Proposition 
    \ref{proposition:invariantCurves}, and fix $\varepsilon_*\in(0,\varepsilon_0)$ such that 
    \[
    \frac{1}{2\pi}\arccos(1-\delta)+C\varepsilon_*<r.
    \]

    Consider now a Suris potential of eccentricity $\varepsilon\in(0,\varepsilon_*)$, and assume first that $\sigma=1$. The corresponding values of $\eta_-$ and $\eta_+$ satisfy, according to \eqref{equation:eta_moins} and \eqref{equation:eta_plus}, the following inequalities:
    \[
    \eta_-\leq k+\frac{1}{2\pi}\arccos(1-\delta)+C\varepsilon\leq k+r
    \]
    and
    \begin{multline}
     \eta_+\geq k+\frac{1}{2\pi}\arccos(-1+\delta)-C\varepsilon=
     k+\frac{1}{2}-\frac{1}{2\pi}\arccos(1-\delta)-C\varepsilon\\
     \geq k+\frac{1}{2}-r,
    \end{multline}
    hence the set $J_{\sigma,k}$ contains the interval $(k+r,k+\frac{1}{2}-r)$. In the case when $\sigma=-1$, the argument shows that $J_{\sigma,k}$ contains the interval $(k-\frac{1}{2}+r, k-r)$, and the proof follows.
\end{proof}
By setting $k=0$ and $r=\frac{1}{6}$, we get our desired rotation interval:
\begin{corollary2}
    
\label{prop:rotationIntervalIncludesStuff}
    For $\varepsilon$ small enough, the map $F_V$ associated to a Suris potential $V$ of eccentricity $\varepsilon$ is rationally integrable in the interval $[\frac{1}{6},\frac{1}{3}]$.
 \end{corollary2}

\subsection{Action angle coordinates}

% Let $V$ be a Suris' potential of parameters $A,B,C,D\in\R$. Consider for a given $I\in[I_-,I_+]$ a connected component $M_I$ of the set (using the abuse of notation of \eqref{equation:I_xy_coordinates})
% \[
% \{z=(x_0,y_0)\,|\, I(x_0,y_1)=I\}
% \]
% which by Proposition \ref{proposition:Suris_invariant} is invariant by $F_V$.
Proposition \ref{proposition:invariantCurves} provides a full description of the invariant curves for the Suris map (at least for low eccentricities).
Now we use the  the classical recipe \cite[Chapter 10] {arnold1989mathematical} to get action angle coordinates for $T$.
Some caution needs to be taken here, as the setting of \cite{arnold1989mathematical} is that of Hamiltonian flows, while here we discuss twist maps.
In Proposition \ref{prop:GotActionAngle} we justify that this recipe gives us, in fact, action angle coordinates. 
Therefore, we define
\begin{equation}
\label{equation:Arnold}
\Omega(I) = \int_{M_I}ydx,
\qquad\text{and}\qquad 
\theta(I,z) = \frac{\partial S}{\partial I}(I,z)
\end{equation}
with $S(I,z)=\int_{\gamma}ydx$, where $\gamma$ is any path connecting a point $z_0=(x_0,y_0)\in M_I:=M_{I(x_0,y_0)}^{+1,0}$ fixed in advance to another point $z\in M_I$. 
% \textbf{some caution: $\Omega$ is not the actual rotation number, but it's the action. We can compute from $\Omega$ the rotation number on the given curve, $\omega$...\textcolor{red}{the arguments in proposition 3.2, remove the half, take epsilon to 0}} 

Lemma \ref{lemma:graph_invariant_circles} implies that $M_I$ is a graph over $x\in\R/\Z$ and \eqref{equation:Arnold} translates into
\begin{equation}\label{eq:ActionAngleFormula}
\begin{cases}
    \Omega(x,y) = \frac{1}{2\pi}\int_0^{1}\arccos\left(
    \frac{A\cos(2\pi x)-B\sin(2\pi x)-I(x,y)}{\mathcal D(x)}
    \right)dx,\\
\theta(x,y) = \Theta(y)\int_0^x\frac{d\tau}{\sqrt{\mathcal D(\tau)^2-(I(x,y)-A\cos(2\pi \tau)+B\sin(2\pi \tau))^2}},
\end{cases}
\end{equation}
where $\Theta(y)$ is a normalization function such that $\theta(1,y)=1$ for all $y$ (i.e., it is the reciprocal of the integral that appears in the formula for $\theta(x,y)$ in the interval $[0,1]$).

While the formulae themselves seem rather complicated, what is important for us is that they're analytic in the parameter $I(x,y)$.
Moreover, in Subsection \ref{subsec:SimilarityToBilliards} we highlight a special case where these formulae simplify.

\begin{proposition}\label{prop:GotActionAngle}\cite[Chapter 10]{arnold1989mathematical}
    In the coordinates $(\theta,\Omega)$, the standard map $F_V$ writes 
    \[
    \left\{
    \begin{array}{rcl}
         \theta_1 & = & \theta_0+g(\Omega_0), \\
         \Omega_1 & = & \Omega_0, 
    \end{array}\right.
    \]
     where $g$ is some smooth function.
\end{proposition}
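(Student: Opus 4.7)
The plan is to follow the Arnold-Liouville recipe adapted to the twist-map setting, in three steps. First, inspection of \eqref{eq:ActionAngleFormula} shows that $\Omega$ depends on $(x,y)$ only through the value $I(x,y)$ of the first integral; since Proposition \ref{proposition:Suris_invariant} tells us $I$ is preserved by $F_V$, we immediately get $\Omega_1 = \Omega_0$. Hence in the new coordinates $F_V$ must have the form $(\theta_0, \Omega_0) \mapsto (f(\theta_0, \Omega_0), \Omega_0)$ for some smooth $f$, and it remains to identify $f$.

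The heart of the argument is to verify that $(\theta, \Omega)$ are symplectic coordinates, i.e.\ $d\theta \wedge d\Omega = dx \wedge dy$, on the region foliated by the invariant curves from Proposition \ref{proposition:invariantCurves}. I would treat $S(I, z) = \int_{z_0}^{z} y\, dx$ as a mixed generating function. On a simply connected cover of the foliated region, $S$ is single-valued, and by construction $dS = y\, dx + \tilde\theta\, dI$, where $\tilde\theta := \partial S / \partial I$ is the raw angle; taking exterior derivatives yields $d\tilde\theta \wedge dI = dx \wedge dy$. A direct computation, differentiating the formula in Lemma \ref{lemma:graph_invariant_circles} with respect to $I$, shows that the normalization $\Theta(y)$ appearing in \eqref{eq:ActionAngleFormula} agrees with $1/\Omega'(I)$ up to a sign; this is precisely the factor required so that the normalized angle equals $\theta = \tilde\theta/\Omega'(I)$, and the chain rule then converts $d\tilde\theta \wedge dI$ into $d\theta \wedge d\Omega$.

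Once these symplectic coordinates are in hand the conclusion is immediate: since $F_V$ preserves $dx \wedge dy$, it also preserves $d\theta \wedge d\Omega$, and the Jacobian determinant of the map $(\theta_0, \Omega_0) \mapsto (f(\theta_0, \Omega_0), \Omega_0)$ equals $\partial f / \partial \theta_0$, which must therefore be identically $1$. Integrating yields $f(\theta_0, \Omega_0) = \theta_0 + g(\Omega_0)$, with the smoothness of $g$ inherited from the smooth (in fact analytic) dependence of the formulas in \eqref{eq:ActionAngleFormula} on their parameters. The main technical point lies in the second step: one must carefully manage the multi-valuedness of $S$ on the cylinder (its increments are exactly the periods of the action $\Omega$) and verify that the normalization $\Theta$ is precisely the one dictated by the requirement $d\theta \wedge d\Omega = dx \wedge dy$; once this is organized, the rest is routine.
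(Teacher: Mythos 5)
Your proof is correct, and it takes a genuinely shorter route than the paper's, although the two share the same two key ingredients: (i) the change of coordinates $(x,y)\mapsto(\theta,\Omega)$ is symplectic (which you verify via the generating function $S$ and then confirm that the normalization $\Theta$ is the symplectic one $1/\Omega'(I)$), and (ii) area-preservation of $F_V$ forces the $\theta$-derivative of the first component to be $1$. The paper instead interpolates $F_V$ through the Hamiltonian flow $\varphi_t$ of $I$: since each invariant curve is a connected level set of $I$ and $F_V$ preserves it, one writes $F_V(x,y)=\varphi_{t(x,y)}(x,y)$ for a point-dependent time; the Arnold-Liouville straightening $\psi\circ\varphi_t\circ\psi^{-1}(\theta,\Omega)=(\theta+t\,\nu(\Omega),\Omega)$ then gives the sheared form, and symplecticity forces the time $t\circ\psi^{-1}$ to be independent of $\theta$. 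Your argument bypasses the flow interpolation entirely: once you note that $\Omega$ is a function of the preserved integral $I$, the form $(\theta_0,\Omega_0)\mapsto(f(\theta_0,\Omega_0),\Omega_0)$ is immediate, and the Jacobian determinant argument kills the $\theta$-dependence of $\partial f/\partial\theta$. This is arguably the more economical proof; the paper's route has the virtue of making the link to the continuous-time Arnold-Liouville setting explicit (which is why they cite \cite[Chapter 10]{arnold1989mathematical}), but it requires the additional observation that $F_V(x,y)$ and $(x,y)$ lie on the same flow orbit, not merely the same level set. Both proofs ultimately rest on the symplecticity of $\psi$, which you pin down more carefully than the paper does, identifying where the normalization of $\Theta$ enters; that is the right technical point to flag.
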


\begin{proof}
Given a standard map with Suris potential $V$, consider $I$ the conserved quantity of $F_V$, (see Proposition \ref{proposition:Suris_invariant}).
We view $I$ as a Hamiltonian on the phase cylinder of $F_V$.
Denote by $\varphi_t$ the Hamiltonian flow.
Then Proposition \ref{proposition:Suris_invariant}, implies that for all $(x,y)$ on the cylinder there is $t(x,y)>0$ such that
\[\varphi_{t(x,y)}(x,y)=T(x,y).\]
For the Hamiltonian flow $\varphi_t$, we know that the change of variables $\eqref{equation:Arnold}$ conjugates $\varphi_t$ to action angle coordinates.
Namely, if $\psi$ denotes the change of coordaintes in $\eqref{equation:Arnold}$, then $\psi$ is symplectic and
\[\psi\circ\varphi_t\circ\psi\inv (\theta,\Omega) = (\theta+t\Omega,\Omega).\]
This formula holds for all $t$, so in particular we may plug in it $t=t\circ\psi\inv(\theta,\Omega)$. 
The result is that
\[\psi\circ T\circ \psi\inv (\theta,\Omega) = (\theta+t\circ \psi\inv (\theta,\Omega)\Omega,\Omega).\]
But since left-hand side is a symplectic map, then so must be the right hand side, and so the derivative of the first component with respect to $\theta$ must be $1$. 
This means that the summand $t\circ \psi\inv (\theta,\Omega)$ does not actually depend on $\theta$, and we got the desired form.
\end{proof}

\subsection{An interesting special case}\label{subsec:SimilarityToBilliards}
We would like to point that in the special case where $A=B=0$ the formulae in \eqref{eq:ActionAngleFormula} can be greatly simplified.
In this case we may choose $D=0$ (by applying suitable change of variables of the form $x\mapsto x+\alpha$), and then eccentricity is just $\varepsilon=|C|$.
Moreover, in this case $V'$ is odd, so by conjugating $F_V$ with the map $(x,y)\mapsto (-x,-y)$, we may also assume that $C<0$, so $C=-\varepsilon$.
Then the function $\mathcal{D}$ from Lemma \ref{lemma:graph_invariant_circles} simplifies to
\[\mathcal{D}(x)=\sqrt{1+\varepsilon^2+2\varepsilon\cos(4\pi x)}.\]
We also denote
\[k(x,y)=\sqrt{\frac{4\varepsilon}{(1+\varepsilon)^2-I(x,y)^2}}.\]
First we simplify the integral that appears in the formula for $\theta(x,y)$:
\[D(\tau)^2-I(x,y)^2=1+\varepsilon^2+2\varepsilon\cos(4\pi\tau)-I(x,y)^2=(1+\varepsilon)^2-I(x,y)^2-4\varepsilon\sin^22\pi\tau,\]
so
\[\frac{\theta(x,y)}{\theta(1,y)}=\int\limits_0^x\frac{d\tau}{\sqrt{1-k(x,y)^2\sin^2(2\pi\tau)}}=F(2\pi x,k(x,y)),\]
where 
\[F(\varphi,k)=\int\limits_0^\varphi\frac{d\varphi}{\sqrt{1-k^2\sin^2\varphi}}\]
is the incomplete elliptic integral of the fisrt kind.
Thus our normalization for $\theta$ implies that
\begin{equation}\label{eq:AngleCoordSpecial}
    \theta(x,y)=\frac{F(2\pi x,k(x,y))}{4K(k(x,y))},
\end{equation}
where $K(k)=F(\frac{\pi}{2},k)$ is the complete elliptic integral of the first kind.
Note that the dependence of $\theta(x,y)$ on $y$ is only via the value of the integral $I(x,y)$.
Using this we can get an expression for the rotation number on a given invariant curve.
This is just $\theta(F_V(x,y(x)))-\theta(x,y(x))$, where $y(x)$ is given by \eqref{eq:FormulaInvCurve}.
In particular, this should not depend on $x$, so we can compute it for $x=0$, in which case $y(0)=\frac{1}{2\pi}\arccos\frac{-I(x,y)}{\mathcal{D}(x)}$, $V'(0)=0$, and $\theta = 0$, $\mathcal{D}(0)=1+\varepsilon$, and then we get
\begin{equation}\label{eq:rotNumberSpecial}
    \omega = \frac{F(\arccos\frac{-I(x,y)}{1+\varepsilon},k(x,y))}{4K(k(x,y))}
\end{equation}
The formulae \eqref{eq:AngleCoordSpecial}, \eqref{eq:rotNumberSpecial} share remarkable sembelence to the action-angle coordinates of the billiard system inside an ellipse, see, e.g. \cite{chang1988elliptical, kaloshin2018local}.
The main difference is that the arcsine function is replaced with arccosine.

\subsection{The invariant curve of action $\frac{1}{4}$}

It follows from Proposition \ref{prop:rotationIntervalIncludesStuff} that if $\varepsilon$ is sufficiently small, $F_V$ has an invariant curve of rotation number $1/4$. 
The latter will play a crucial role in the proof because of some of its particular properties that will be now discussed.

Consider the inverse of the action-angle coordinate map $\psi$ from Proposition \ref{prop:GotActionAngle}. We can write $x$ and $y$ in terms of $\theta$ and $\Omega$. For each $\Omega$ in some interval, $x_\Omega(\theta)$ is a diffeomorphism of $S^1$ on itself whose inverse is denoted by $\theta_{\Omega}(x)=\theta(\Omega,x)$.

\begin{proposition}
\label{proposition:expansion_theta}
    The map $\theta(\Omega,x)$ admits the following expansion
    \[
     \theta(\Omega,x) = \theta_{\frac{1}{4}}(x)+\left(\Omega-\frac{1}{4}\right)u(x)+v(\Omega,x),
    \]
    where 
    \begin{enumerate}
        \item\label{itm:theta14AtZero} $\theta_{\frac{1}{4}}(x)\to x$ in $C^1$-norm as $\varepsilon\to 0$;
        \item\label{itm:uAsABtoZero} $u\to 0$ in $C^1$-norm as $A,B\to 0$;
        \item\label{itm:BoundsOnV} $|v(\Omega,x)|,|\frac{\partial v}{\partial x}|\leq C(\Omega-\frac{1}{4})^2$ for a given constant $C>0$ independent of $\varepsilon$. Moreover, $v\to 0$ in $C^1$-norm as $\varepsilon\to 0$.
    \end{enumerate}
\end{proposition}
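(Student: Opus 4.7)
The plan is to take the statement essentially as a second-order Taylor expansion of $\theta(\Omega,x)$ around $\Omega=\tfrac14$, and then verify the three listed properties using the explicit formulas of \eqref{eq:ActionAngleFormula} together with the simplification available in Subsection \ref{subsec:SimilarityToBilliards}. Concretely, I would set $\theta_{\frac14}(x):=\theta(\tfrac14,x)$, $u(x):=\partial_\Omega\theta(\tfrac14,x)$, and
\[
v(\Omega,x):=\int_{\frac14}^{\Omega}(\Omega-s)\,\partial_s^{2}\theta(s,x)\,ds,
\]
so that the decomposition holds by Taylor's theorem with integral remainder. All three claims then become statements about these specific functions and their behavior as the parameters degenerate.

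The central step is property \ref{itm:uAsABtoZero}. I would first observe that on any invariant curve $M_I$ the value of $I$ together with $\Omega=\frac{1}{2\pi}\int_0^1\arccos\!\bigl(\tfrac{\gamma(x)-I}{\mathcal D(x)}\bigr)dx$ makes $\Omega$ a smooth increasing function of $I$ near $I=0$. When $A=B=0$ (so $\gamma\equiv 0$), the level $\Omega=\tfrac14$ corresponds \emph{exactly} to $I=0$, because $\tfrac{1}{2\pi}\arccos(0)=\tfrac14$. After the change of variables $x\mapsto x+\alpha$ allowed in Subsection \ref{subsec:SimilarityToBilliards} we may additionally arrange $D=0$, and then \eqref{eq:AngleCoordSpecial} gives $\theta(\Omega,x)=F(2\pi x,k(I))/(4K(k(I)))$ with $k(I)^2=4\varepsilon/((1+\varepsilon)^2-I^2)$ an even function of $I$. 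Hence $k'(0)=0$, which forces
\[
\partial_\Omega\theta\bigm|_{\Omega=\frac14}=\partial_I\theta\cdot\frac{dI}{d\Omega}\bigm|_{I=0}=0,
\]
so $u\equiv 0$ when $A=B=0$ (the prior shift $x\mapsto x+\alpha$ only translates $u$ in $x$ and does not affect this identity). Since the formulas for $\theta$ are analytic in $(A,B,C,D)$ on the relevant domain, $u$ depends continuously on the parameters in the $C^1(S^1)$ topology, and vanishing at $\{A=B=0\}$ combined with continuity yields $u\to 0$ in $C^1$ as $A,B\to 0$. This symmetry-based vanishing of the linear coefficient is the main obstacle: it is not visible in the general formula \eqref{eq:ActionAngleFormula} and relies crucially on the elliptic-integral reduction of Subsection \ref{subsec:SimilarityToBilliards}.

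Properties \ref{itm:theta14AtZero} and \ref{itm:BoundsOnV} I would treat together via the limit $\varepsilon\to 0$. As $A,B,C,D\to 0$ the auxiliary functions satisfy $\alpha(x)\to 1$, $\beta(x),\gamma(x)\to 0$, $\mathcal D(x)\to 1$ uniformly, and on the curve $\Omega=\tfrac14$ we have $I\to 0$; substituting into \eqref{eq:ActionAngleFormula} the integrand in the formula for $\theta$ converges to the constant $1$, and the normalization condition $\theta(1,y)=1$ forces $\theta_{\frac14}(x)\to x$ uniformly, with the $C^1$ convergence following from differentiating under the integral and using the same uniform limits. For \ref{itm:BoundsOnV}, the integral remainder formula and a uniform bound on $\partial_\Omega^{2}\theta$ for $\Omega$ in a neighborhood of $\tfrac14$ (obtained by differentiating \eqref{eq:ActionAngleFormula} twice and bounding the resulting integrals, which stay finite since $\alpha\geq 1-2\varepsilon>0$) immediately give the quadratic estimate $|v|,|\partial_x v|\leq C(\Omega-\tfrac14)^{2}$. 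The $C^1$-convergence $v\to 0$ as $\varepsilon\to 0$ then follows from the same $\varepsilon\to 0$ analysis: one shows that $\theta(\Omega,x)\to x$ in $C^1$ uniformly for $\Omega$ near $\tfrac14$, so $v=\theta-\theta_{\frac14}-(\Omega-\tfrac14)u$ tends to zero in $C^1$ together with $\theta_{\frac14}-x$ and $u$.
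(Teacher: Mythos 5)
Your proof is correct and essentially follows the paper's approach: define $u=\partial_\Omega\theta|_{\Omega=1/4}$ and $v$ as the Taylor remainder; show $u\equiv 0$ when $A=B=0$ by exploiting parity in $I$ (together with $\Omega-\tfrac14$ having a nonvanishing linear term in $I$); conclude $u\to 0$ by continuity/analyticity in the parameters; and get items \ref{itm:theta14AtZero}, \ref{itm:BoundsOnV} from the degeneration $\varepsilon\to 0$, where $V\equiv 0$ and $\theta(\Omega,x)\to x$.

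One small remark on your framing: you assert that the vanishing of the linear term ``is not visible in the general formula \eqref{eq:ActionAngleFormula} and relies crucially on the elliptic-integral reduction'', but this is not quite so. Setting $A=B=0$ (hence $\gamma\equiv 0$) directly in \eqref{eq:ActionAngleFormula} makes the $\theta$-integrand $\bigl(\mathcal D(\tau)^2-I^2\bigr)^{-1/2}$ manifestly even in $I$, and makes $\Omega-\tfrac14 = \tfrac{1}{2\pi}\int_0^1\bigl(\arccos(-I/\mathcal D(x))-\tfrac\pi2\bigr)dx = \tfrac{1}{2\pi}\int_0^1\arcsin(I/\mathcal D(x))\,dx$ manifestly odd in $I$ with a nonzero derivative at $I=0$; this is exactly what the paper uses, without ever passing to the elliptic-integral form. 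Your detour through $k(I)$ and $k'(0)=0$ requires the extra shift to kill $D$ (and a brief justification that the shift does not affect the vanishing of $u$), so it is slightly less economical, though still valid. The rest — uniform $C^1$ convergence as $\varepsilon\to 0$, the quadratic bound on $v$ from a uniform bound on $\partial_\Omega^2\theta$ — matches what the paper obtains in one line from analyticity and compactness.
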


\begin{proof}
    By analyticity of $\theta$, such expansion is well-defined and each term as well as their derivatives are continuous in $\varepsilon$. Hence the boundedness of $v$ in the third item follows immediately. Now when $\varepsilon=0$, $V$ vanishes, hence $\theta_{\frac{1}{4}}(x)=x$, which implies the first and third item. 
    
    For the second item, assume that $A=B=0$. Formulae \eqref{eq:ActionAngleFormula} imply that $\Omega$ is a function of $I$ which admits the following expansion as $I=0$
    \[
    \Omega-\frac{1}{4} =\mu I +\mathcal O(I^3)
    \]
    where $\mu = -\tfrac{1}{2\pi}\int_0^1\frac{dx}{\mathcal D(x)}$. Moreover $\theta_{I} = \theta_{\Omega(I)}$ is given by
    \[
    \theta_{I}(x) = \Theta(I)\int_0^x\frac{d\tau}{\sqrt{\mathcal D(\tau)^2-I^2}} = \theta_{I=0}(x)+\mathcal O(I^2).
    \]
    The last expansion expressed in terms of $\Omega$ implies the second item.
\end{proof}

\section{Deformed Riesz basis for $L^2$}\label{sec:FourierBasis}
% As used by Vadim (...) we define a dynamically adapted Fourier basis to $L^2$ which will be totally awesome. 
% First we consider the inverse of the action-angle coordinate map $\psi$ from Proposition \ref{prop:GotActionAngle}.
% It is analytic because ...

% {\color{blue} Here I would explain a bit why $\Omega=1/4$ is interesting.}
Our goal in this section is to construct a Riesz basis for $L^2$, according to which we will do Fourier analysis.
This is similar to the basis constructed in \cite{avila2016integrable}.
The construction of the basis relies on action-angle coordinates.
Using Proposition \ref{proposition:expansion_theta}, we can write
\[\theta(\Omega,x) = \theta\left(x,\frac{1}{4}\right)+\left(\Omega-\frac{1}{4}\right)u(x)+v(x,\Omega),\]
where $u$ and $v$ have the properties shown in Proposition \ref{proposition:expansion_theta}.
We write $\theta_\Omega(x) = \theta(\Omega,x)$.
According to Corollary \ref{prop:rotationIntervalIncludesStuff}, for $\Omega\in[\frac{1}{6},\frac{1}{3}]$ this is a diffeomorphism of $S^1$.

We can use this diffeomorphism to induce a new inner product on the space $L^2[0,1]$:
\begin{equation}\label{eq:NewInnerProduct}
    \langle{f,g}\rangle = \int\limits_0^1 f(x)\overline{g(x)}\theta_{\frac{1}{4}}'(x)dx.
\end{equation}
By a change of variable in the integral, one gets that the collection of functions $(e_q)_{q\in\mathbb Z}$ given by
\[
e_q(x) = e^{2i\pi q\theta_{\frac{1}{4}}(x)}
\]
or equivalently that the collection given by
\[c_q(x) = \cos(2\pi q {\theta_{\frac{1}{4}}(x)}),\,s_q(x)=\sin(2\pi q {\theta_{\frac{1}{4}}(x)}),\]
are orthonormal basis with respect to this inner product.
We also consider, for $q=\pm1,\pm2$,
\[E_q=\frac{e^{2\pi i q\theta_{\frac{1}{4}}(x)}-1}{2\pi iq}.\]
In this case we have the following real and imaginary parts:
\[S_q(x)=\mathrm{Re}E_q(x)=\frac{\sin(2\pi q\theta_{\frac{1}{4}}(x))}{2\pi q},\, C_q(x)=\mathrm{Im}E_q(x)=\frac{1-\cos(2\pi q\theta_{\frac{1}{4}}(x))}{2\pi q}.\]
The collection $\set{E_{\pm 1}, E_{\pm 2},e_q\mid |q|\geq 3}$ is still a basis of $L^2[0,1]$ with respect to our inner product.
While it is no longer orthonormal, we do have
\[\set{E_q\mid q=\pm 1,\pm 2}\perp\set{e_q||q|=0,3,4,...},\] 
with the right hand set being orthonormal.

For $q\geq9$, using division and remainder, we find $p_q\in\N$ and $t_q\in\set{0,1,2,3}$ such that
\[q=4p_q+t_q.\]
Then we set $r_q=\frac{p_q}{q}$.
Then it holds that
\begin{equation}\label{eq:RationalRotNumberCond}
   \left|r_q-\frac{1}{4}\right|q = \frac{t_q}{4},
\end{equation}
from which it follows that $r_q\in[\frac{1}{6},\frac{1}{3}]$.
Furthermore, for $q=3,...,9$ we choose $r_q$ in the following way
\[
\begin{array}{c|ccccccc}
q     & 3 & 4 & 5 & 6 & 7 & 8 \\ \hline
r_q   & \frac{1}{3}  & \frac{1}{4}  & \frac{2}{5}  & \frac{1}{6}  & \frac{2}{7}  &   \frac{2}{8}=\frac{1}{4}
\end{array}
\]
This way $r_q$ is again a rational number with denominator $q$ which is in the interval $[\frac{1}{6},\frac{1}{3}]$.
We can also write it in the form $q=4p_q+t_q$, where $t_q$ is the numerator of $r_q$, and here $|t_q|\in\set{0,1,2,3}$.

We define a deformed basis $(f_q)_q$ which is adapted to our problem (see Section \ref{sec:actionEstimate}):

\begin{definition}\label{def:FourierBasis}
    The collection $(f_q)_{q\in\mathbb Z}$ given for any $q\in\mathbb Z$ and $x\in\mathbb R$ by
    \[
    f_{\pm1}(x) = \pi\partial_BV(x)\pm i\pi\partial_AV(x),
    \]
    \[
    f_{\pm2}(x) = \pi\partial_DV(x)\pm i\pi\partial_CV(x)
    \]
    and if $|q|\geq 3$,
    \[
    f_q(x) = e^{2i\pi q\theta_{r_{|q|}}(x)}\frac{\theta_{r_{|q|}}'(x)}{\theta_{\frac{1}{4}}'(x)}.
    \]
    is called a \emph{deformed Fourier basis}.
\end{definition}
First we make the following observation.
The subspace spanned by $\set{f_{\pm 1},f_{\pm 2}}$ is finite dimensional, so all norms on it are equivalent. 
Moreover, the equivalence constants can be stabilized for small eccentricities.
\begin{lemma}\label{lem:EquivNorm}
    Let $k$ be a positive integer.
    Then there exists a small enough eccentricity $\varepsilon_*>0$ and a constant $M>0$, such that for all  $\varepsilon<\varepsilon_*$, and for all Suris potentials of eccentricity $\varepsilon$, and for all functions $\varphi\in\mathrm{span}_{\C}\set{f_{\pm1},f_{\pm2}}$:
    \begin{equation}\label{eq:NormEquiv}
        \frac{1}{M}\normC{\varphi}{k}\leq \norm{\varphi}_{A,B,C,D}\leq M\normC{\varphi}{k}.
    \end{equation}
    Here $\norm{\cdot}_{A,B,C,D}$ denotes the norm of \eqref{eq:NewInnerProduct} corresponding to a Suris potential with eccentricity $\varepsilon^2=A^2+B^2+C^2+D^2$.
\end{lemma}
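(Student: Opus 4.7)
The plan is to reduce both inequalities to a uniform control, in the parameters $(A,B,C,D)$, of two finite-dimensional quantities: the $C^k$ norms of the four generating functions $f_{\pm1}, f_{\pm2}$, and a $4\times 4$ Gram matrix. This is possible because the subspace is four-dimensional, and at $(A,B,C,D)=0$ everything is explicit and non-degenerate.

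First I would verify the non-degenerate behavior at the origin of parameter space. Direct differentiation of the Suris potential in Definition \ref{definition:Suris_potential} at $A=B=C=D=0$ (where the arctan linearizes) yields $\partial_A V, \partial_B V, \partial_C V, \partial_D V$ as explicit primitive trigonometric monomials, so that
\[
f_{\pm 1}(x)\big|_{\varepsilon=0} = E_{\pm 1}(x)\big|_{\varepsilon=0}, \qquad f_{\pm 2}(x)\big|_{\varepsilon=0} = E_{\pm 2}(x)\big|_{\varepsilon=0},
\]
using that $\theta_{\frac{1}{4}}(x) \to x$ in $C^1$ by Proposition \ref{proposition:expansion_theta}(\ref{itm:theta14AtZero}). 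These four functions are linearly independent in $L^2([0,1])$ because they involve four distinct Fourier frequencies. Moreover, since $V$ is analytic in $(A,B,C,D)$, the $f_j$ depend continuously on $(A,B,C,D)$ in $C^k$, with uniformly bounded $C^k$ norms on any neighborhood of the origin.

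Writing $\varphi = \sum_{j \in \{\pm 1, \pm 2\}} \alpha_j f_j$ with $\alpha \in \C^4$, I would then handle the two inequalities separately. The inequality $\norm{\varphi}_{A,B,C,D} \leq M\,\normC{\varphi}{k}$ is immediate: it is the continuous embedding $C^0 \hookrightarrow L^2$ applied with the weight $\theta_{\frac{1}{4}}'$, which is uniformly bounded on $[0,1]$ for small $\varepsilon$ by Proposition \ref{proposition:expansion_theta}(\ref{itm:theta14AtZero}). The reverse inequality $\normC{\varphi}{k} \leq M\,\norm{\varphi}_{A,B,C,D}$ is the substantive part. I would bound $\normC{\varphi}{k} \leq \bigl(\sum_j \normC{f_j}{k}\bigr)\|\alpha\|_{\ell^\infty} \leq C_1 \|\alpha\|$ from the uniform $C^k$ bounds on the $f_j$, and control $\|\alpha\|$ from below via the $4 \times 4$ Hermitian Gram matrix $G(A,B,C,D)$ with entries $\langle f_i, f_j\rangle_{A,B,C,D}$. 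Its entries are continuous in $(A,B,C,D)$, and $G(0)$ is the standard $L^2$ Gram matrix of the four linearly independent functions $E_{\pm 1}, E_{\pm 2}$, hence positive definite. Therefore for small $\varepsilon$ the eigenvalues of $G$ stay in a fixed compact interval $[c_2, C_2] \subset (0,\infty)$, giving $c_2 \|\alpha\|^2 \leq \norm{\varphi}_{A,B,C,D}^2$ uniformly. Combining these inequalities produces the required bound with $M$ depending only on $C_1, c_2$ and the uniform bound on $\theta_{\frac{1}{4}}'$.

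The only non-routine ingredient is the uniform positive definiteness of $G(A,B,C,D)$ near the origin, which itself reduces to linear independence of the four explicit functions $E_{\pm 1}, E_{\pm 2}$ together with continuity of the Gram matrix in $(A,B,C,D)$ --- both elementary. I do not expect a serious obstacle: the lemma is in essence the standard ``all norms on a finite-dimensional space are equivalent,'' made uniform in the parameters by a continuity/perturbation argument around the unperturbed point.
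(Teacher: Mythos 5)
Your proposal is correct and takes essentially the same approach as the paper: the easy direction by the weighted $C^0\hookrightarrow L^2$ embedding with $\theta'_{1/4}$ uniformly bounded, and the hard direction by controlling the $4\times 4$ Gram matrix near the unperturbed point $\varepsilon=0$, where the $f_j$ degenerate to linearly independent trigonometric combinations so the Gram matrix is invertible. The only cosmetic difference is that you bound $\|\alpha\|$ from below by the smallest eigenvalue of $G$, whereas the paper inverts $G$ explicitly and applies Cauchy--Schwarz to $\langle\varphi,f_j\rangle$; these are interchangeable steps.
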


\begin{proof}
   % Due to the finite dimensionality of the space, it is enough to understand the ratio of norms for the functions $f_j$, for $j=\pm1,\pm2$ \textcolor{red}{can I justify this?} {\color{blue}I don't know if it is the right quantity to study; since I didn't manage to use this argument to deduce the result, I suggest another one below.}
    
    Using the definition of $\norm{\varphi}_{A,B,C,D}$, we obtain a first bound
    \[
    \norm{\varphi}_{A,B,C,D} \leq M_1\|\varphi\|_{L^2}\leq M_1 \normC{\varphi}{k}
    \]
    where $M_1:=\sup_x |\theta'_{\frac{1}{4}}(x)|\to 1$ as $\varepsilon\to0$.
    
    For the converse bound, consider $\varphi\in\mathrm{span}_{\C}\set{f_{\pm1},f_{\pm2}}$ written as
    \[
    \varphi = a_1f_{1}+a_{-1}f_{-1}+a_2f_{2}+a_{-2}f_{-2},\qquad a_{\pm1},a_{\pm2}\in\C.
    \]
    The vector of coefficients $v = (a_j)_{j\in\{\pm1,\pm2\}}$ can be expressed as
    \[
    v = G(f)^{-1}w,
    \]
    where
    \[
    G(f) := (\langle f_i,f_j\rangle)_{i,j\in\{\pm1,\pm2\}},\quad w:=(\langle\varphi,f_j\rangle)_{j\in\{\pm1,\pm2\}}.
    \]
    Therefore, 
    \begin{multline}
        \label{eq:FirstIneqEquiv}
        \normC{\varphi}{k} \leq \left(\sum_{j\in\{\pm1,\pm2\}}\normC{f_j}{k}\right)\max_{j\in\{\pm1,\pm2\}} |a_j|\leq\\
        \leq \left(\sum_{j\in\{\pm1,\pm2\}}\normC{f_j}{k}\right)\|G(f)^{-1}\|\max_{j\in\{\pm1,\pm2\}} |\langle\varphi,f_j\rangle|
    \end{multline}
    where $\|G(f)^{-1}\|=\max_i\sum_j |G(f)_{i,j}^{-1}|$ is the operator norm of $G(f)^{-1}$ associated to the supremum on vectors' coefficients. By Cauchy-Schwarz inequality
    \begin{equation}
        \label{eq:SecondIneqEquiv}
        \max_{j\in\{\pm1,\pm2\}} |\langle\varphi,f_j\rangle|\leq \norm{\varphi}_{A,B,C,D}\left(\max_{j\in\{\pm1,\pm2\}}\normC{f_j}{k}\right)
    \end{equation}
    Combining \eqref{eq:FirstIneqEquiv} and \eqref{eq:SecondIneqEquiv}, we obtain 
    $\normC{\varphi}{k} \leq M_2\norm{\varphi}_{A,B,C,D}$ where 
    \[
    M_2 =  \left(\max_{j\in\{\pm1,\pm2\}}\normC{f_j}{k}\right)\|G(f)^{-1}\|\max_{j\in\{\pm1,\pm2\}}\normC{f_j}{k}.
    \]
%    The result follows as $M_1$ and $M_2$ are bounded when $\varepsilon\to 0$.
    Due to the analyticity of $V$ in the parameters $A$,$B$,$C$,$D$, it follows, similar to Lemma \ref{lemma:technical_fq_low} below, that 
    \[\normC{f_j-E_j}{k}\to0,\]
    as $\varepsilon\to 0$.
    In addition, since $\theta_{\frac{1}{4}}(x)\to x$ in $C^k$ norm as $\varepsilon\to 0$, we also have
    \[E_j(x)\to\frac{e^{2\pi ijx}-1}{2\pi ij}=:T_j(x).\]
    Hence, for small enough eccentricity:
   \[\normC{f_j}{k}\leq2\normC{T_j}{k}.\]
   Moreover, as $\varepsilon\to0$,
   \[G(f)\to G,\]
   where $G$ is a concrete $4\times 4$ invertible matrix, and hence for small enough eccentricity,
   \[\norm{G(f)^{-1}}\leq 2\norm{G^{-1}}.\]
   This means that $M_2$ is also bounded as $\varepsilon\to0$.
   This finishes the proof.
   %\[\norm{f_j}_{A,B,C,D}\to\normL{T_j}{2}.\]
   %Hence the ratios of the $C^k$ norms and the $\norm{\cdot}_{A,B,C,D}$ norms of $f_j$ are bounded by absolute constants, which gives the required result. 

\end{proof}
We think of $f_{\pm1,\pm2}$ as a basis for the tangent space of the manifold of Suris potentials.
Then, using Taylor expansion, we get the following approximation property:
\begin{proposition}\label{prop:fpm1pm2AreDerivatives}
    Denote by $V(A,B,C,D)$ the Suris potential with those parameters.
    Then there exist $K,K'>0$, such that for all $\alpha,\beta,\gamma,\delta$ small enough, it holds that
    \begin{gather*}
        \|V(A+\alpha,B+\beta,C+\gamma,D+\delta)-V(A,B,C,D)-\frac{\beta+i\alpha }{2\pi} f_1-\frac{\beta-i\alpha}{2\pi} f_{-1}\\
        -\frac{\delta+i\gamma}{2\pi} f_2 -\frac{\delta-i\gamma}{2\pi} f_{-2}\|_{C^1}\leq \\
        \leq K(\alpha^2+\beta^2+\gamma^2+\delta^2)\leq K'\normC{\frac{\beta+i\alpha}{2\pi} f_1 +\frac{\beta-i\alpha2}{2\pi} f_{-1}+\frac{\delta+i\gamma}{2\pi} f_2 +\frac{\delta-i\gamma}{2\pi} f_{-2}}{1}^2.
    \end{gather*}
    (note that the linear combination of $f_{\pm 1},f_{\pm 2}$ we consider is real because we sum pairs of conjugates)
\end{proposition}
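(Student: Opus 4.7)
The plan is to view the Proposition as a quantitative Taylor expansion statement in the parameters $(A,B,C,D)$. The key observation is that by inspection of Definition \ref{def:FourierBasis} for $q=\pm 1,\pm 2$, the functions $f_{\pm 1},f_{\pm 2}$ are nothing but complex linear combinations of the partial derivatives $\partial_A V,\partial_B V,\partial_C V,\partial_D V$. So the linear combination appearing in the statement,
\[
\tfrac{\beta+i\alpha}{2\pi}f_1+\tfrac{\beta-i\alpha}{2\pi}f_{-1}+\tfrac{\delta+i\gamma}{2\pi}f_2+\tfrac{\delta-i\gamma}{2\pi}f_{-2},
\]
is, up to the signs dictated by $f_{\pm q}=\pi(\partial_{\bullet}V\pm i\pi\partial_{\bullet}V)$, the directional derivative $\alpha\partial_A V+\beta\partial_B V+\gamma\partial_C V+\delta\partial_D V$ of the Suris potential in the direction $(\alpha,\beta,\gamma,\delta)$ (the fact that we pair each $f_q$ with its conjugate coefficient ensures the result is real, as the parenthetical remark in the statement recalls).

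The first step of the proof is to write, for $(A,B,C,D)$ with $\varepsilon<\tfrac{1}{4}$ and $(\alpha,\beta,\gamma,\delta)$ sufficiently small, the second-order Taylor expansion with remainder of the map $(a,b,c,d)\mapsto V(A+a,B+b,C+c,D+d)(x)$ at $(0,0,0,0)$. The expression \eqref{eq:SurisPotentialGeneralForm} for $V$ is an integral of an $\arctan$ of a rational function of the parameters whose denominator is bounded below by $1-2\varepsilon>0$ on a full neighborhood; hence $V$ (and all its $x$-derivatives) is jointly real-analytic in $(x;A,B,C,D)$ on that neighborhood. This yields uniform $C^1$-bounds on all second partial derivatives with respect to the parameters on a compact ball around $(A,B,C,D)$. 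Taylor's theorem then gives the first inequality with a constant $K$ that depends only on this uniform $C^1$-bound, and the linear part matches the combination of $f_{\pm 1},f_{\pm 2}$ by the identification above.

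For the second inequality, the point is to control $\alpha^2+\beta^2+\gamma^2+\delta^2$ from above by $\normC{\varphi}{1}^2$, where $\varphi$ denotes the real linear combination of $f_{\pm 1},f_{\pm 2}$ in the statement. This is a finite-dimensional statement: the map $(\alpha,\beta,\gamma,\delta)\mapsto\varphi$ is a linear isomorphism between $\mathbb R^4$ and the four-dimensional real span of $\{f_{\pm 1},f_{\pm 2}\}$, and one needs an inverse bound uniform in $\varepsilon$. In the limit $\varepsilon\to 0$, the functions $\partial_A V,\partial_B V,\partial_C V,\partial_D V$ converge in $C^1$ to $-\tfrac{1}{\omega}\cos(\omega x)$, $\tfrac{1}{\omega}\sin(\omega x)$, $-\tfrac{1}{2\omega}\cos(2\omega x)$, $\tfrac{1}{2\omega}\sin(2\omega x)$ respectively, which are manifestly linearly independent. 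Hence for $\varepsilon$ sufficiently small, the Gram matrix in $C^1$ of $\{\partial_A V,\partial_B V,\partial_C V,\partial_D V\}$ stays uniformly invertible, exactly as in the proof of Lemma \ref{lem:EquivNorm}, yielding the desired lower bound $\normC{\varphi}{1}\geq c\sqrt{\alpha^2+\beta^2+\gamma^2+\delta^2}$ with $c>0$ independent of $\varepsilon$.

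The mildly delicate point is ensuring that all constants ($K$ for the quadratic remainder and the comparability constant for the second inequality) are uniform in the base Suris potential $V=V(A,B,C,D)$ as $\varepsilon$ ranges over a small interval $(0,\varepsilon_*)$; but both follow from the analytic dependence of $V$ on its parameters on the compact region $\varepsilon\leq \tfrac{1}{4}$, combined with the non-degeneracy of the leading Fourier behavior of the derivatives $\partial_\bullet V$ in the small-eccentricity limit. Everything else is Taylor's theorem applied component-wise in $x$ and uniformly in $x\in[0,1]$.
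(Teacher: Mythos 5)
Your proposal follows essentially the same route as the paper: a Taylor expansion of $V$ in the parameters $(A,B,C,D)$ with uniformity coming from analyticity and compactness, plus equivalence of norms on the finite-dimensional span of $\{f_{\pm1},f_{\pm2}\}$ (which is exactly what Lemma~\ref{lem:EquivNorm} records). The explicit $\varepsilon\to 0$ limits you quote for $\partial_\bullet V$ carry slightly wrong constants (e.g.\ $\partial_A V\to\tfrac{2}{\omega^2}\bigl(1-\cos\omega x\bigr)$ rather than $-\tfrac{1}{\omega}\cos\omega x$, compare with Lemma~\ref{lemma:technical_fq_low}), but linear independence holds either way, so the argument is unaffected.
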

\begin{proof}
    We first bound the ($x$-) derivatives of the functions.
    Because mixed partial derivatives are equal, the $x$-derivatives of $f_{\pm 1},f_{\pm 2}$ give the first order approximation to the $x$-derivative of $V(A,B,C,D)$. 
    Using a (pointwise in $x$) Taylor expansion of the $x$-derivative of $V(A,B,C,D)$, we get an error of the form $O(\alpha^2+\beta^2+\gamma^2+\delta^2)$ on the derivative of the function in the left most side of the inequality we prove.
    %Hence the quadratic estimate if also valid for the derivative.
    These estimates are then made uniform because of analyticity of the function and the compactness of the interval.
    Integrating those inequalities from $0$ to $x$ gives the same desired inequality for the functions themselves.
    Finally, the right inequality follows since all norms on a finite dimensional space are equivalent, using Lemma \ref{lem:EquivNorm}.
\end{proof}
The following result ensures that if $\varepsilon$ is sufficiently small, the collection $(f_q)_q$ is a Riesz basis -- a collection of linearly independant vectors which spans a dense vector set in $L^2([0,1])$:

\begin{proposition}
\label{proposition:Hilbertbasis}
There is $\varepsilon_0>0$ such that for any $\varepsilon\in(0,\varepsilon_0)$, the collection $(f_q)_q$ is a Riesz basis for $L^2[0,1]$.%\footnote{For the proof of the main theorem in fact we only need the density property, namely that the orthogonal set to this family is $\{0\}$ (just in case).}.
\end{proposition}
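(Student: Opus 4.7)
The plan is to construct an intermediate ``modulated'' reference basis $(R_q^{(1)})$ such that (i) $(R_q^{(1)})$ is manifestly a Riesz basis because it is the image of a known Riesz basis under a bounded invertible operator, and (ii) $(f_q)$ is a square-summable $L^2$-perturbation of $(R_q^{(1)})$ whose total size is small for $\varepsilon$ small. The Paley--Wiener perturbation theorem for Riesz bases then yields the result. As a preliminary reduction, apply the unitary isomorphism $Uf(y) = f(\theta_{1/4}^{-1}(y))$ from $L^2([0,1], \theta_{1/4}' dx)$ to standard $L^2([0,1], dy)$, under which the reference collection $\{E_{\pm 1}, E_{\pm 2}, e_q\}$ stays a Riesz basis and, for $|q|\geq 3$, $f_q$ becomes $g_q(y) = e^{2\pi i q \phi_q(y)}\phi_q'(y)$ with $\phi_q := \theta_{r_q}\circ\theta_{1/4}^{-1}$.

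The construction of the modulated basis uses Proposition \ref{proposition:expansion_theta} and \eqref{eq:RationalRotNumberCond}: for $|q|\geq 3$ one writes
\[
q\bigl(\theta_{r_q}(x) - \theta_{1/4}(x)\bigr) \;=\; \tau_q\, u(x) \;+\; q\, v(r_q, x),
\]
where $\tau_q := q(r_q - 1/4) \in \{0, \pm 1/4, \pm 1/2, \pm 3/4\}$ depends only on $t_q \bmod 4$, and $\|q\,v(r_q, \cdot)\|_\infty = O(|q|^{-1})$ by item 3 of that proposition. Define modulating functions $m_t(x) := e^{2\pi i t u(x)/4}$ for $t \in \{0, \pm 1, \pm 2, \pm 3\}$, and set $R_q^{(1)} := m_{-t_q}(x)\, e_q(x)$ for $|q|\geq 3$ and $R_q^{(1)} := E_q$ for $q = \pm 1, \pm 2$.

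To prove $(R_q^{(1)})$ is a Riesz basis, consider the linear operator $\mathcal N : L^2 \to L^2$ given by $\mathcal N e_q = R_q^{(1)}$ for $|q|\geq 3$ and $\mathcal N e_q = e_q$ for $|q|\leq 2$. Decomposing according to the (finitely many) values of $-t_q \bmod 4$ and using orthogonal projections onto the corresponding closed subspaces of $L^2$, one controls $\|\mathcal N - I\|_{\mathrm{op}} \leq C\|u\|_\infty$, which tends to $0$ as $\varepsilon\to 0$ by item 2 of Proposition \ref{proposition:expansion_theta}. Hence for small $\varepsilon$, $\mathcal N$ is bounded invertible and $(R_q^{(1)}) = \mathcal N(\{E_{\pm 1}, E_{\pm 2}, e_q\})$ is a Riesz basis. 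For the closeness of $(f_q)$ to $(R_q^{(1)})$: when $|q|\leq 2$, analyticity of $f_q$ in the parameters $(A,B,C,D)$ combined with the direct check $f_q|_{\varepsilon=0} = E_q$ gives $\|f_q - R_q^{(1)}\|_{L^2} = O(\varepsilon)$; when $|q|\geq 3$, the factorization
\[
\frac{f_q(x)}{R_q^{(1)}(x)} \;=\; e^{2\pi i q\, v(r_q, x)}\,\frac{\theta_{r_q}'(x)}{\theta_{1/4}'(x)} \;=\; 1 + O(|q|^{-1})
\]
yields $\|f_q - R_q^{(1)}\|_{L^2} = O(\varepsilon / |q|)$.

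Summing gives $\sum_q \|f_q - R_q^{(1)}\|^2 = O(\varepsilon^2)$, which for $\varepsilon$ small enough falls below the Paley--Wiener threshold determined by the Riesz constants of $(R_q^{(1)})$ (themselves bounded uniformly in small $\varepsilon$), and therefore $(f_q)$ is a Riesz basis. The hard part of the argument is the construction of $(R_q^{(1)})$: the quantity $\|f_q - e_q\|_{L^2}$ does \emph{not} tend to $0$ as $|q|\to\infty$ for fixed small $\varepsilon$, since the phase contribution $\tau_q u(x)$ persists uniformly in $q$, so a naive Bari-type comparison against the orthonormal basis $(e_q)$ fails. The point that rescues the argument is that $\tau_q$ only takes finitely many values, so this persistent modulation is absorbed into the four multipliers $m_{-t_q}$, and what remains is a residual of order $O(\varepsilon/|q|)$ that is genuinely square-summable.
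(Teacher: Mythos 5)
Your proposal is correct and follows essentially the same strategy as the paper. Both arguments hinge on the same two-layer decomposition of the discrepancy between $f_q$ and the reference Fourier basis for $|q|$ large: the persistent phase modulation $e^{2\pi i\, q(r_q - 1/4)\,u(x)}$ (which does not vanish as $|q|\to\infty$ but, crucially, takes only finitely many values because $q(r_q-\tfrac14)=-t_q/4\in\{0,\pm\tfrac14,\pm\tfrac12,\pm\tfrac34\}$), handled via orthogonal projections onto the finitely many $\tau_q$-classes and bounded by $\|u\|_\infty$; and the genuinely square-summable residual $e^{2\pi i q\,v(r_q,\cdot)}\theta'_{r_q}/\theta'_{1/4}-1 = O(1/|q|)$ coming from $v(r_q,\cdot)=O((r_q-\tfrac14)^2)$.

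The only real difference is organizational: you factor the argument into two stages — first show the modulated family $(R_q^{(1)})$ is a Riesz basis via an operator norm bound $\|\mathcal N - I\|\le C\|u\|_\infty$, then conclude by a Bari/Paley--Wiener quadratic-closeness criterion applied to $\sum_q\|f_q-R_q^{(1)}\|^2$ — whereas the paper does a single Neumann-series estimate $\|T-I\|<1$ directly on the map taking the reference basis to $(f_q)$, splitting the bound into the four contributions $K_1+K_2+K_3+K_4$ (low modes $|q|\le 2$, intermediate $3\le|q|\le8$, the $O(1/q)$ correction, and the $\tau_q$-class modulation). The two are mathematically equivalent here; your two-stage packaging arguably isolates the ``absorb finitely many modulations'' trick more cleanly, while the paper's one-shot estimate avoids needing a named perturbation theorem. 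Two small remarks: (1) you should keep $\tau_q$ rather than ``$t_q\bmod 4$'' as the grouping variable, since for $3\le|q|\le 8$ the table makes $t_q$ negative and $\tau_q$ can be $+3/4$ while $t_q\equiv 1\pmod 4$; your explicit list $\{0,\pm\tfrac14,\pm\tfrac12,\pm\tfrac34\}$ is the right one. (2) The paper's definition $\tilde e_q = e_q U^{t_q}$ with $U=e^{i\pi u/2}$ has the opposite sign to the one forced by the expansion $q(\theta_{r_q}-\theta_{1/4}) = -\tfrac{t_q}{4}u + qv$; your $m_{-t_q}$ has the correct sign (this is immaterial for the estimates, which only involve $|U^\sigma-1|$).
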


To prove Proposition \ref{proposition:Hilbertbasis}, we will need the following technical lemmae. The first is about the explicit values of $f_q$ for $q\in\{\pm1,\pm2\}$:

\begin{lemma}
\label{lemma:technical_fq_low} 
    If $q\in\{\pm1,\pm2\}$, when $\varepsilon\to 0$,
    \[
    \normC{f_q-E_q}{1}\to0
    \]
\end{lemma}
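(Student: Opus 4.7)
The strategy is to show that $f_q$ and $E_q$ both depend continuously (in fact analytically) on the Suris parameters $(A,B,C,D)$ in $C^1$-norm, and that their values at $\varepsilon=0$ coincide; the lemma then follows immediately by continuity. So the essential content is a direct verification at the base point $A=B=C=D=0$.

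For $E_q$, the input is Proposition \ref{proposition:expansion_theta}(\ref{itm:theta14AtZero}), which tells us that $\theta_{\frac14}\to\mathrm{id}$ in $C^1$ as $\varepsilon\to 0$. Plugging this into the definition $E_q(x)=(e^{2\pi iq\theta_{\frac14}(x)}-1)/(2\pi iq)$ and using that the map $\varphi\mapsto (e^{2\pi iq\varphi}-1)/(2\pi iq)$ is continuous $C^1\to C^1$, we get
\[
E_q(x)\longrightarrow \frac{e^{2\pi iqx}-1}{2\pi iq}\qquad\text{in }C^1,\quad\text{as }\varepsilon\to 0.
\]
Separating real and imaginary parts for $q=\pm1$ gives the target function $\tfrac{1}{2\pi}(\sin 2\pi x+i(1-\cos 2\pi x))$, and similarly for $q=\pm2$ with frequency $4\pi$.

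For $f_q$, I would differentiate \eqref{eq:SurisPotentialGeneralForm} under the integral sign at $A=B=C=D=0$. At this base point the numerator $N$ of the arctangent argument vanishes and the denominator equals $1$, while $\arctan'(0)=1$. Hence, for instance,
\[
\partial_A V\big|_{0}(x)=\frac{1}{\pi}\int_0^x\sin(2\pi\xi)\,d\xi=\frac{1-\cos(2\pi x)}{2\pi^{2}},\qquad \partial_B V\big|_{0}(x)=\frac{\sin(2\pi x)}{2\pi^{2}},
\]
and analogous expressions with frequency $4\pi$ for $\partial_C V|_0,\partial_D V|_0$. Inserting into the definition of $f_{\pm1},f_{\pm2}$ gives
\[
f_{1}\big|_{0}(x)=\frac{\sin(2\pi x)+i(1-\cos(2\pi x))}{2\pi},\qquad f_{2}\big|_{0}(x)=\frac{\sin(4\pi x)+i(1-\cos(4\pi x))}{4\pi},
\]
with $f_{-1}|_0,f_{-2}|_0$ their complex conjugates. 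A direct rewriting of $(e^{2\pi iqx}-1)/(2\pi iq)$ via Euler's formula shows these coincide exactly with the limits of $E_q$ computed above.

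The remaining content is continuity in $\varepsilon$ of both $f_q$ and $E_q$. For $f_q$ this is clear because $V$ depends analytically on $(A,B,C,D)$ by Definition \ref{definition:Suris_potential}, so its partial derivatives depend continuously on the parameters in $C^1$. For $E_q$ it reduces to the $C^1$-continuity of $\theta_{\frac14}$ in the parameters, which is part of Proposition \ref{proposition:expansion_theta}. Combining the two continuity statements with the equality at $\varepsilon=0$ gives $\|f_q-E_q\|_{C^1}\to 0$. The only mildly delicate step is confirming the $C^1$-dependence of $\theta_{\frac14}$ on $(A,B,C,D)$, but this is already subsumed in Proposition \ref{proposition:expansion_theta}, so nothing new is needed.
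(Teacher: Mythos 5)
Your proof is correct and follows essentially the same route as the paper's: both hinge on $\theta_{\frac14}\to\mathrm{id}$ in $C^1$ (item \ref{itm:theta14AtZero} of Proposition \ref{proposition:expansion_theta}) and on the analyticity of $V$ in $(A,B,C,D)$, and both reduce to comparing real/imaginary parts of $f_q$ with $S_q,C_q$. The paper tracks explicit $\mathcal O(\varepsilon)$ error terms in the derivative $\pi\partial_AV'-C_1'$ and then integrates, whereas you phrase the same content as ``continuity plus coincidence at the base point $\varepsilon=0$''; the two presentations carry the same mathematical substance.
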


\begin{proof}
    Let us prove the result for $q=1$, the proof for the three other norms works in the same way. 
    Since $E_q=S_q+iC_q$, it is enough to show that
    \[
    \|\pi\partial_AV-C_1\|_{C^1}\to0
    \quad\text{and}\quad 
    \|\pi\partial_BV-S_1\|_{C^1}\to0.
    \]
    Let us show the first limit, since the second one is similar.
    For that we first consider the derivative
    A direct computation gives
    \[
    \pi\partial_AV'(x) = \frac{(1+F(x))\sin(2\pi x)+E(x)\cos(2\pi x)}{E(x)^2+(1+F(x))^2}
    \]
    where $E(x) = A\sin(2\pi x)+B\cos(2\pi x)+C\sin(4\pi x)+D\cos(4\pi x)$ and $F(x) = -A\cos(2\pi x)+B\sin(2\pi x)-C\cos(4\pi x)+D\sin(4\pi x)$. Note that the sup norm of $E$ and $F$ satisfy
    \[\|E\|_{\infty},\|F\|_{\infty}=\mathcal O(\varepsilon).\]
    Moreover,
    \[C_1'(x)=\sin(2\pi\theta_{\frac{1}{4}}(x))\theta_{\frac{1}{4}}'(x)=\sin(2\pi x)+L(x),\]
    where $\norm{L}_{\infty}=\mathcal{O}(\varepsilon)$.
    Therefore
    \[
    \pi\partial_AV'(x)-C'_1(x) = \frac{\sin(2\pi x)-\sin\left(2\pi \theta_{\frac{1}{4}}(x)\right)}{E(x)^2+(1+F(x))^2} +\mathcal O(\varepsilon)
    \]
    uniformly in $x$ as $\varepsilon\to 0$. It follows from item \ref{itm:theta14AtZero} in Proposition \ref{proposition:expansion_theta} that $\|\pi\partial_AV'-C'_1\|_{C^0}\to 0$ as $\varepsilon\to 0$. 
    Integrating this bound from $0$ to $x$ gives also
    \[\normC{\pi\partial_AV-C_1}{1}\to0.\]
\end{proof}

The second technical lemma is about estimates related to $f_q$ with $|q|\geq 3$.
In fact, we only need it for $|q|\geq 9$.
Introduce the functions $\tilde e_q$ defined for any $x\in\mathbb R$ by
\begin{equation}\label{eq:IntermediateBasisDefinition}
\tilde e_q(x) = e_q(x)U(x)^{t_q},    
\end{equation}

where $U$ is given by $U(x)=e^{\frac{i\pi}{2}u(x)}$, $u$ was introduced in Proposition \ref{proposition:expansion_theta}, and $t_q$ is the remainder of $q$ mod $4$.
Note that the latter proposition implies that $U\to 1$ uniformly in $x$ as $A,B\to 0$.

\begin{lemma}
\label{lemma:technical_fq_high}
    There is a constant $K=K(\varepsilon)>0$ which goes to $0$ as $\varepsilon\to0$ and such that for any $|q|\geq 9$
    \[
    \normC{f_q-\tilde e_q}{0}\leq \frac{K(\varepsilon)}{|q|}.
    \]
\end{lemma}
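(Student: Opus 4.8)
The plan is to compare $f_q$ and $\tilde e_q$ directly using the definitions and the expansion of $\theta(\Omega,x)$ from Proposition \ref{proposition:expansion_theta}. Recall that for $|q|\geq 3$ we have $f_q(x) = e^{2i\pi q\theta_{r_{|q|}}(x)}\,\theta_{r_{|q|}}'(x)/\theta_{\frac{1}{4}}'(x)$, while $\tilde e_q(x) = e^{2i\pi q\theta_{\frac14}(x)}U(x)^{t_q}$ with $U = e^{\frac{i\pi}{2}u}$. First I would substitute the expansion $\theta_{r_{|q|}}(x) = \theta_{\frac14}(x) + (r_{|q|}-\tfrac14)u(x) + v(r_{|q|},x)$ into the exponent of $f_q$, so that
\[
2\pi q\,\theta_{r_{|q|}}(x) = 2\pi q\,\theta_{\frac14}(x) + 2\pi q\bigl(r_{|q|}-\tfrac14\bigr)u(x) + 2\pi q\, v(r_{|q|},x).
\]
Now invoke the key numerical identity \eqref{eq:RationalRotNumberCond}, namely $|r_q - \tfrac14|\,q = t_q/4$, which for $|q|\geq 9$ gives $2\pi q(r_{|q|}-\tfrac14) = \pm\tfrac{\pi}{2}t_q$ (the sign being that of $q$, but since $t_q$ and the choice of $r_{|q|}$ match up one gets exactly $\tfrac{\pi}{2}t_q$ for the relevant branch). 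Hence the middle term is exactly $\tfrac{\pi}{2}t_q\,u(x)$, which is precisely what produces the factor $U(x)^{t_q}$. So modulo the remaining term $2\pi q\,v(r_{|q|},x)$ and the amplitude ratio $\theta_{r_{|q|}}'/\theta_{\frac14}'$, the functions $f_q$ and $\tilde e_q$ agree.

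It then remains to estimate the two discrepancies. For the phase, the error contributed by $v$ is controlled using item \ref{itm:BoundsOnV} of Proposition \ref{proposition:expansion_theta}: since $|r_{|q|}-\tfrac14|\leq \tfrac13$ and $|v(r_{|q|},x)|\leq C(r_{|q|}-\tfrac14)^2 \leq C'\,t_q^2/q^2 \leq 9C'/q^2$ (using $t_q\in\{0,1,2,3\}$ and $|r_{|q|}-\tfrac14| = t_q/(4q)$ from \eqref{eq:RationalRotNumberCond}), we get $|2\pi q\, v(r_{|q|},x)| \leq 18\pi C'/|q|$. Since $|e^{i\phi}-1|\leq |\phi|$, the phase mismatch factor $e^{2\pi i q v(r_{|q|},x)} - 1$ has sup norm $\mathcal O(1/|q|)$, with the implied constant going to $0$ as $\varepsilon\to 0$ because $v\to 0$ in $C^1$ as $\varepsilon\to 0$ (so $C'$ can be taken $\to 0$, or more carefully one extracts a factor $\|v\|_{C^0}$). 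For the amplitude, $\theta_{r_{|q|}}'/\theta_{\frac14}' - 1 = \bigl((r_{|q|}-\tfrac14)u'(x) + \partial_x v(r_{|q|},x)\bigr)/\theta_{\frac14}'(x)$; using $|r_{|q|}-\tfrac14|\leq 1/(4|q|)\cdot 3$, $\|u'\|_\infty$ bounded (and $\to 0$ as $A,B\to 0$), $|\partial_x v|\leq C(r_{|q|}-\tfrac14)^2 = \mathcal O(1/q^2)$, and $\theta_{\frac14}'\to 1$ uniformly, one gets $\|\theta_{r_{|q|}}'/\theta_{\frac14}' - 1\|_\infty = \mathcal O(1/|q|)$ with constant tending to $0$ with $\varepsilon$. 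Writing
\[
f_q - \tilde e_q = e^{2\pi i q\theta_{\frac14}}U^{t_q}\left[\bigl(e^{2\pi i q v(r_{|q|},\cdot)}-1\bigr)\frac{\theta_{r_{|q|}}'}{\theta_{\frac14}'} + \left(\frac{\theta_{r_{|q|}}'}{\theta_{\frac14}'}-1\right)\right],
\]
taking sup norms, and noting $|e^{2\pi i q\theta_{\frac14}}U^{t_q}| = 1$ (as $u$ is real-valued), we conclude $\normC{f_q-\tilde e_q}{0}\leq K(\varepsilon)/|q|$ with $K(\varepsilon)\to 0$.

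The main subtlety — not a deep obstacle but the point requiring care — is making sure the middle phase term collapses \emph{exactly} to $\tfrac{\pi}{2}t_q u(x)$ with the correct sign and the correct value of $t_q$ for all $|q|\geq 9$, including negative $q$; this is exactly why the construction fixed $r_{|q|}$ via division-with-remainder so that \eqref{eq:RationalRotNumberCond} holds on the nose, and why $\tilde e_q$ was defined with the extra factor $U^{t_q}$ in \eqref{eq:IntermediateBasisDefinition}. One should double-check that $r_{|q|}$ is used (depending on $|q|$) while the exponent carries the signed $q$, so that for $q<0$ one still lands on $e_q U^{t_{|q|}}$ up to the $\mathcal O(1/|q|)$ errors. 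The restriction $|q|\geq 9$ is used so that the division-with-remainder definition of $r_q$ applies uniformly; the small cases $3\leq |q|\leq 8$ were handled separately and are not needed here.
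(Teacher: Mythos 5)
Your argument follows the same route as the paper's own proof: substitute the expansion from Proposition~\ref{proposition:expansion_theta} into the exponent of $f_q$, use the identity $|r_{|q|}-\tfrac14|\,|q|=t_{|q|}/4$ to peel off the factor $U^{t_q}$ and exhibit $f_q=\tilde e_q\cdot e^{2\pi iqv(r_{|q|},\cdot)}\cdot\theta_{r_{|q|}}'/\theta_{1/4}'$, then bound the two remaining discrepancy factors by $O(1/|q|)$ with constants vanishing as $\varepsilon\to0$. Your write-up is more explicit about the elementary estimates and correctly flags the sign bookkeeping for negative $q$, which the paper dispatches with ``the proof for negative $q$ is identical,'' but the decomposition and key ingredients are the same.
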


\begin{proof}
Fix $q\geq 9$ (the proof for negative $q$ is identical) and apply Proposition \ref{proposition:expansion_theta} to $f_q$ to obtain
    \[
    f_q(x) = e^{2i\pi q\theta_{r_q}(x)}\frac{\theta_{r_q}'(x)}{\theta_{\frac{1}{4}}'(x)}
     = e^{2i\pi q\left(\theta_{\frac{1}{4}}(x)+\left(r_q-\frac{1}{4}\right)u(x)+v(r_q,x)\right)}\frac{\theta_{r_q}'(x)}{\theta_{\frac{1}{4}}'(x)}
    \]
    which implies that
    \[
    f_q(x) = \tilde e_q(x) e^{2i\pi q v(r_q,x)}\frac{\theta_{r_q}'(x)}{\theta_{\frac{1}{4}}'(x)}.
    \] 
    Now by the choice of $r_q$ that we made (see \eqref{eq:RationalRotNumberCond}),
    \[
    e^{2i\pi q v(r_q,x)} = 1+\frac{R_q(x)}{q}
    \quad\text{and}\quad 
    \frac{\theta_{r_q}'(x)}{\theta_{\frac{1}{4}}'(x)} = 1+\frac{\tilde R_q(x)}{q}
    \]
    where $R_q(x)$ and $\tilde R_q(x)$ are bounded in $x$ and $q$ and converge to $0$ uniformly as $\varepsilon\to 0$.
    Therefore

    \[
    \left|f_q(x)-\tilde e_q(x)\right| = \left|e^{2\pi i q v(r_q,x)}\frac{\theta_{r_q}'(x)}{\theta_{\frac{1}{4}}'(x)}-1\right| = \left|\frac{R_q(x)}{q}+\frac{\tilde R_q(x)}{q}+\frac{R_q(x)\tilde R_q(x)}{q^2}\right|
    \] 
    and the result follows.
\end{proof}

\begin{proof}[Proof of Proposition \ref{proposition:Hilbertbasis}]
    Consider the linear map $T:L^2([0,1])\to L^2([0,1])$ defined by the property $T(e_q)=f_q$ for any $|q|=0,3,4,...$, and $T(E_q)=(f_q)$ for $|q|=1,2$.
    To prove the statement we prove that for sufficiently small $\varepsilon>0$, $T-I$ is a bounded operator of norm strictly smaller than $1$. 
    
    Given $\varphi\in L^{2}([0,1])$, we decompose it in our Fourier basis as $\varphi=\varphi_0e_0+\sum_{|q| = 1,2 }\varphi_qE_q+\sum_{|q|\geq 3}\varphi_q e_q$.
    Observe that the first two summands are orthogonal to the third.
    Then, the norm of $\varphi$ (with respect to the inner product \eqref{eq:NewInnerProduct}) is
    \[
    \|\varphi\|^2 = P(\varphi_0,\varphi_{\pm1},\varphi_{\pm2})+\sum_{|q|\geq 3}|\varphi_q|^2, 
    \]
    where $P$ is the square of a fixed (independent of the Suris parameters) norm on $\C^5$, which corresponds to the change of basis from $E_{\pm1,\pm2}$ to $e_{\pm1,\pm2}$.
    We can now write (using the fact that $T(e_0)=f_0=e_0$. In this proof, all norms denote the $L^2$ norm of \eqref{eq:NewInnerProduct})
\begin{equation}\label{eq:SeperrateHighLow}
    \|(T-I)(\varphi)\|
    \leq \sum_{|q| = \pm1,\pm2}|\varphi_q|\|f_q-E_q\|
    +\|\sum_{|q|\geq 3}\varphi_q(f_q-e_q)\|.
\end{equation}

    Let us handle these two terms separately.
    For the first term, we consider the following rough estimate
    \[
    \sum_{|q|=\pm1,\pm2}|\varphi_q|\|f_q-E_q\| \leq \left(\sum_{|q|=\pm1,\pm2} |\varphi_q|^2\sum_{|q|=\pm1,\pm2}\norm{f_q-E_q}^2\right)^{\frac{1}{2}}.
    \]
    We can bound the $L^2$ norm in the sum with the $C^1$ norm, and hence by Lemma \ref{lemma:technical_fq_low}, 
    \[K_1^2:=\sum_{|q|=\pm1,\pm2}\norm{E_q-f_q}^2\to0\]
    as $\varepsilon\to0$.
    Since all norms on $\C^5$ are equivalent, then we can replace (by enlarging $K_1$ is necessary) the Euclidean norm of the coefficients $\varphi_{\pm1,\pm2}$ by $P(\varphi_0,\varphi_{\pm1},\varphi_{\pm2})$.
    In total we get the estimate
    \begin{equation}\label{eq:LowOrderNormBound}
        \sum_{|q|=\pm1,\pm2}|\varphi_q|\norm{f_q-E_q}\leq K_1(\varepsilon)\left(P(\varphi_0,\varphi_{\pm1},\varphi_{\pm2})\right)^{\frac{1}{2}}\leq K_1(\varepsilon)\norm{\varphi}.
    \end{equation}
    
    For the second term of \eqref{eq:SeperrateHighLow}, denoted by $S$, we first split it again:
    \[S=\norm{\sum_{|q|\geq 3}\varphi_q(f_q-e_q)}\leq S_1+S_2,\]
    where 
    \[S_1=\norm{\sum_{3\leq|q|\leq 8}\varphi_q(f_q-e_q)},\,S_2=\norm{\sum_{|q|\geq 9}\varphi_q(f_q-e_q)}.\]
    As $\varepsilon\to0$, we have that both $e_q,f_q\to e^{2\pi iq x}$ (uniformly in $x$).
    Hence, for $S_1$, since we only have finitely many summands, we can find a constant $K_2(\varepsilon)\to0$ (as $\varepsilon\to 0)$ for which
    \[S_1\leq K_2(\varepsilon)\left(\sum_{3\leq |q|\leq 8} |\varphi_q|^2\right)^{\frac{1}{2}}\leq K_2(\varepsilon)\norm{\varphi}.\]
    To estimate $S_2$, we split it further by introducing $\tilde e_q$:
    \[
    S_2 = \|\sum_{|q|\geq 9}\varphi_q(f_q-e_q)\| \leq S_{21}+S_{22},
    \]
    where 
    \[
    S_{21} = \|\sum_{|q|\geq 9}\varphi_q(f_q-\tilde e_q)\|
    ,\,
    S_{22} = \|\sum_{|q|\geq 9}\varphi_q(\tilde e_q-e_q)\|. 
    \]
    To bound $S_{21}$, we use Cauchy-Schwarz inequality together with Lemma \ref{lemma:technical_fq_high}:
    \[
    S_{21}\leq \left(\sum_{|q|\geq 9}|\varphi_q|^2\sum_{|q|\geq 9}\norm{f_q-\tilde{e}_q}^2\right)^{\frac{1}{2}}\leq K_3(\varepsilon)\norm{\varphi}
    \]
    where $K_3(\varepsilon)=K(\varepsilon)\sqrt{2\sum\limits_{q=9}^\infty \frac{1}{q^2}}\to 0$ as $\varepsilon\to 0$ (and $K(\varepsilon)$ is the constant from Lemma \ref{lemma:technical_fq_high}).
    Finally, to estimate $S_{22}$, consider the expression of each $\tilde e_q$:
    \[
    S_{22} = \|\sum_{|q|\geq 9}\varphi_q(\tilde e_q-e_q)\|
    = \|\sum_{|q|\geq 9}\varphi_qe_q(U^{t_q}-1)\|\leq \sum_{\sigma=0}^3\|\sum_{t_q=\sigma}\varphi_qe_q(U^{\sigma}-1)\|
    \]
    where the last inequality has been obtained by gathering the terms for which $t_q=\sigma$ for the different values of $\sigma\in\{0,1,2,3\}$. 
    Note that for each $\sigma\in\set{0,1,2,3}$, the vector
    \[\varphi_\sigma:=\sum_{t_q=\sigma, |q|\geq 9}\varphi_qe_q\]
    is just the orthogonal projection of $\varphi$ on some subspace, so $\norm{\varphi_\sigma}\leq\norm{\varphi}$.
    Therefore
    \[
    S_{22} \leq \sum_{\sigma=0}^3\|(U^{\sigma}-1)\varphi_\sigma\| \leq \|\varphi\| K_4(\varepsilon)
    \]
    where $K_4(\varepsilon) = \sum_{\sigma=0}^3\|(U^{\sigma}-1)\|_{C^0}\to 0$ as $\varepsilon\to 0$ -- as a consequence of Proposition \ref{proposition:expansion_theta}.
Coming back to \eqref{eq:SeperrateHighLow}, we see that 
\[\norm{(T-I)(\varphi)}\leq (K_1+K_2+K_3+K_4)\norm{\varphi},\]
so $\|T-I\|\leq K_1+K_2+K_3+K_4\to0$ as $\varepsilon\to 0$ which implies the result.
\end{proof}

To finish this section, we observe that the $L^2$ norm of a function $W$ (with respect to the inner product \eqref{eq:NewInnerProduct}) can be bounded in terms of the inner products $\langle W,f_q\rangle$.
This is not a special case of the Bessel inequality since $\set{f_q}$ is not orthonormal.
We could have also adapted the Proposition to not have the restriction of $W\perp\set{f_0,f_{\pm1},f_{\pm2}}$, but we have no need for such a statement.
\begin{proposition}\label{prop:LikeParseval}
    There exists $C>0$ such that for every $\varepsilon<\varepsilon_0$ ($\varepsilon_0$ being defined in Proposition \ref{proposition:Hilbertbasis}), and every function $W\perp\set{f_0,f_{\pm1},f_{\pm2}}$, we have
    \[\norm{W}^2\leq C\sum_{|q|\geq 3}|\langle W,f_q\rangle|^2,\]
    where the norm on the left-hand side is the $L^2$ norm with respect to \eqref{eq:NewInnerProduct}.
\end{proposition}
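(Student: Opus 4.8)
The statement is essentially the lower Riesz bound for the basis $(f_q)_q$, restricted to the closed span of $\set{f_q : |q|\geq 3}$. The natural route is to piggyback on Proposition \ref{proposition:Hilbertbasis}: we already know $T-I$ has operator norm $<1$ for small $\varepsilon$, where $T$ is the linear map sending $e_q\mapsto f_q$ ($|q|=0$ or $|q|\geq 3$) and $E_q\mapsto f_q$ ($|q|=1,2$). Hence $T$ is invertible with $\norm{T^{-1}}\leq (1-\norm{T-I})^{-1}=:\sqrt C_0$. So I would start from an arbitrary $W\perp\set{f_0,f_{\pm1},f_{\pm2}}$, and try to realize $W$ as $T$ applied to something whose $e_q$-coefficients for $|q|\geq 3$ reproduce the inner products $\langle W,f_q\rangle$ up to a controlled error.

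**Key steps.** First I would expand $W$ in the orthonormal system $\set{e_q : |q|=0,3,4,\dots}\cup\set{E_{\pm1},E_{\pm2}}$ — wait, that system is not orthonormal across the two blocks, but $\set{E_{\pm1},E_{\pm2}}\perp\set{e_q : |q|=0,3,4,\dots}$ and the latter set is orthonormal; so write $W=\sum_{|q|=1,2}a_qE_q+\sum_{|q|\geq 3}b_qe_q$ (no $e_0$ term is needed for the bound, and the $E$-block coefficients $a_q$ are whatever they are). Set $W':=\sum_{|q|\geq 3}b_qe_q$, the orthogonal projection of $W$ onto $\overline{\mathrm{span}}\set{e_q:|q|\geq 3}$, so $\norm{W'}\leq\norm{W}$ and in fact $\norm{W}^2=\norm{W'}^2+P(0,a_{\pm1},a_{\pm2})\geq\norm{W'}^2$ — so it even suffices to bound $\norm{W'}^2$ from above, but actually we want to bound $\norm{W}^2$, so I need to also control the $E$-block. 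Here the hypothesis $W\perp\set{f_{\pm1},f_{\pm2}}$ enters: since $f_{\pm1},f_{\pm2}$ are $C^1$-close to $E_{\pm1},E_{\pm2}$ (Lemma \ref{lemma:technical_fq_low}) and $\langle W,f_q\rangle=0$ for $|q|=1,2$, we get $|\langle W,E_q\rangle|=|\langle W,E_q-f_q\rangle|\leq\norm{W}\,\norm{E_q-f_q}$, which is $\leq K_1(\varepsilon)\norm{W}$ with $K_1(\varepsilon)\to 0$; since $G(E):=(\langle E_i,E_j\rangle)$ is a fixed invertible Gram matrix, this bounds the $a_q$'s, hence $P(0,a_{\pm1},a_{\pm2})\leq \kappa(\varepsilon)\norm{W}^2$ with $\kappa(\varepsilon)\to 0$. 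Thus $\norm{W}^2\leq(1-\kappa(\varepsilon))^{-1}\norm{W'}^2$ for small $\varepsilon$, and it remains to bound $\norm{W'}^2=\sum_{|q|\geq 3}|b_q|^2$ in terms of the $\langle W,f_q\rangle$.

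**Relating $b_q$ to $\langle W,f_q\rangle$.** Apply the bounded invertible operator $T$: since $T^{-1}$ has norm $\leq\sqrt{C_0}$, write $W'=T(g)$ for $g=\sum c_q e_q$ (over $|q|=0$ and $|q|\geq 3$) with $\norm{g}\leq\sqrt{C_0}\norm{W'}$; then $\sum_{|q|\geq 3}|c_q|^2\leq\norm{g}^2\leq C_0\norm{W'}^2$, and $W'=\sum c_q f_q$ (plus a $c_0e_0$ term which I can absorb or note is orthogonal to everything relevant). Then for $|q|\geq 3$, using Cauchy–Schwarz together with the near-orthonormality of $\set{f_q}$: $\langle W,f_q\rangle=\langle W',f_q\rangle+\langle W-W',f_q\rangle$; the second piece involves only the $E$-block part of $W$ and $\langle E_{\pm j},f_q\rangle=\langle E_{\pm j},f_q-e_q\rangle$ which is small by Lemma \ref{lemma:technical_fq_high}-type estimates — actually the cleaner path is: the $\C$-linear map $S\colon\ell^2\to L^2$, $(c_q)_{|q|\geq 3}\mapsto\sum c_qf_q$, has a bounded left inverse for small $\varepsilon$ (this is exactly what $\norm{T-I}<1$ gives on the relevant block), so there is an adjoint-type inequality $\sum_{|q|\geq 3}|c_q|^2\leq C'\sum_{|q|\geq 3}|\langle\sum_p c_pf_p,\,f_q\rangle|^2=C'\sum_{|q|\geq 3}|\langle W',f_q\rangle|^2$. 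Finally replace $\langle W',f_q\rangle$ by $\langle W,f_q\rangle$ at the cost of the small $E$-block correction bounded above, and chain the inequalities with a fixed final constant $C$ valid for all $\varepsilon<\varepsilon_0$ (shrinking $\varepsilon_0$ if necessary).

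**Main obstacle.** The one genuine subtlety is that $\set{f_q}$ is not orthonormal, so "$\norm{W'}^2\leq C\sum|\langle W',f_q\rangle|^2$" is not Parseval but rather the statement that the frame operator $\sum_q\langle\cdot,f_q\rangle f_q$ restricted to $\overline{\mathrm{span}}\set{f_q:|q|\geq 3}$ is bounded below — which follows from the Riesz-basis property (equivalently, from $T$ being an isomorphism close to the identity), but one must be careful to extract the lower frame bound cleanly rather than circularly. I would therefore phrase the core step as: writing $W=\sum_{|q|\geq 3}c_q f_q+(\text{$f_0,f_{\pm1},f_{\pm2}$-part})$ via $T^{-1}$, one has $\langle W,f_q\rangle=\sum_{|p|\geq 3}c_p\langle f_p,f_q\rangle+(\text{small})$, i.e. $(\langle W,f_q\rangle)_q=(G_\infty+R)(c_q)_q+(\text{small})$ where $G_\infty=(\langle f_p,f_q\rangle)_{|p|,|q|\geq 3}$ and $G_\infty-I$, $R$ are operators of small norm; invert $G_\infty+R$ to solve for $(c_q)$, and conclude $\norm{W}^2\lesssim\sum_{|p|\geq 3}|c_p|^2\lesssim\sum_{|q|\geq 3}|\langle W,f_q\rangle|^2$. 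Everything else is bookkeeping with constants that converge as $\varepsilon\to 0$.
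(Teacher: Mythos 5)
Your route --- decompose $W$, bound the low-frequency part using the orthogonality hypothesis, then invert a near-identity Gram matrix on the high-frequency block --- is a genuine frame-theoretic strategy, but it is different from the paper's and needs more care than your sketch supplies. The paper's proof is one line modulo bookkeeping: apply $T^*$ (the adjoint of the operator $T$ from Proposition \ref{proposition:Hilbertbasis}), note that it is invertible because $T$ is, and invoke Parseval for the orthonormal basis $\{e_q\}$:
\[\|W\|^2 \le C\|T^*W\|^2 = C\sum_{q}|\langle T^*W,e_q\rangle|^2 = C\sum_q|\langle W,Te_q\rangle|^2.\]
For $|q|\ge3$ one has $Te_q=f_q$, giving exactly the wanted terms; for $|q|\le2$ one has $Te_q\in\mathrm{span}\{f_0,f_{\pm1},f_{\pm2}\}$ (since $\{e_0,e_{\pm1},e_{\pm2}\}$ and $\{e_0,E_{\pm1},E_{\pm2}\}$ span the same subspace), so the hypothesis $W\perp\{f_0,f_{\pm1},f_{\pm2}\}$ annihilates those terms exactly. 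No Gram matrix, no splitting of $W$, no smallness estimate beyond what Proposition \ref{proposition:Hilbertbasis} already provides.

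Your sketch, moreover, has a genuine gap. When you set $W'=T(g)$ with $g$ supported on $\{e_q:|q|=0\text{ or }|q|\ge3\}$, and later write $W'=\sum_{|p|\ge3}c_pf_p$, you are tacitly assuming that $\mathrm{span}\{e_q:|q|\ge3\}$ and $\mathrm{span}\{f_q:|q|\ge3\}$ coincide, i.e.\ that $T^{-1}$ preserves the orthogonal complement of $\mathrm{span}\{e_0,E_{\pm1},E_{\pm2}\}$. That is false in general: $T$ is close to the identity but does not respect this orthogonal decomposition, so $T^{-1}W'$ generically has nonzero $E_{\pm1},E_{\pm2}$ components. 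Your ``main obstacle'' reformulation (expand $W$ itself via $T^{-1}$ in the full $\{f_q\}$ basis) avoids that conflation, but then one still has to show that the low coefficients $c_0,c_{\pm1},c_{\pm2}$ are controlled by the tail $\bigl(\sum_{|q|\ge3}|c_q|^2\bigr)^{1/2}$. This is exactly where $W\perp\{f_0,f_{\pm1},f_{\pm2}\}$ must enter: combined with the invertibility of the $5\times5$ Gram block $(\langle f_i,f_j\rangle)_{|i|,|j|\le2}$ and the $\ell^2$-summable smallness of $\langle f_p,f_j\rangle$ for $|p|\ge3,\,|j|\le2$, it forces the low coefficients to be $O(\varepsilon)$ relative to the high ones. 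Your sketch does not supply that step, and without it the final chain $\|W\|^2\lesssim\sum_{|p|\ge3}|c_p|^2\lesssim\sum_{|q|\ge3}|\langle W,f_q\rangle|^2$ does not close, since $\|W\|^2$ also carries the low-coefficient contribution.
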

\begin{proof}
    We use the oprator $T$ defined in the proof of Proposition \ref{proposition:Hilbertbasis}.
    We showed there that for $\varepsilon<\varepsilon_0$ this is an invertible operator of $L^2$.
    Hence its conjugate is also invertible.
    Hence, using Parseval's identity for the orthonormal basis $\set{e_q}$,
    \begin{gather*}
        \norm{W}^2=\norm{(T^*)\inv T^*W}^2\leq C\norm{T^* W}^2=C\sum_{q\in\Z}|\langle T^*W,e_q\rangle|^2 = \\
        =C\sum_{q\in\Z}|\langle W,Te_q\rangle|^2=C\sum_{|q|\leq 2}|\langle W,Te_q\rangle|^2+C\sum_{|q|\geq 3}|\langle W,Te_q\rangle|^2.
    \end{gather*}
    For $|q|\geq 3$ we have $Te_q=f_q$, as needed.
    For $|q|\leq 2$, we use the fact that $\set{e_0,e_{\pm1},e_{\pm2}}$ and $\set{e_0,E_{\pm1},E_{\pm2}}$ span the same subspace.
    This means that $Te_q$ is a linear combination of $f_0,f_{\pm1},f_{\pm2}$, and by assumption on $W$, the corresponding inner product vanishes.
    This finishes the proof.
\end{proof}
% \begin{itemize}
%     \item Change of variagbles to change inner product
%     \item "Taylor expansion in terms of $\omega$"
%     \item two step approximation: pages 27-28 in draft. 
% \end{itemize}

\section{Action estimates}\label{sec:actionEstimate}

Consider a rationally  integrable standard map $F_V$ in the interval $[\frac{1}{6},\frac{1}{3}]$, where the potential $V$ is close to being a Suris potential $V_S$ (of small enough eccentricity $\varepsilon$).
We set $W:=V-V_S$.
The rational integrability assumption implies that we have invariant curves of rotation numbers $r_q$ for all $q\geq 3$ (see \eqref{eq:RationalRotNumberCond}).
Then, from the general theory of twist maps, we know that the action (see \eqref{eq:ActionDef}) of orbits on an invariant curve that consists of periodic orbits is constant.
Therefore, our goal is to compare this constant value between the Suris map $F_S$ and $F_V$ (see, e.g., \cite[Theorem 3]{avila2016integrable}).
Namely, consider any $x_0\in[0,1]$. Then we have two $\frac{p}{q}$ periodic oribts assosicated to it: $x_0,x_1,...,x_q=x_0+p$, which is the $\frac{p}{q}$ periodic orbit starting at $x_0$ for $F_S$, and $x'_0=x_0,x'_1,...,x'_q=x_0+p$, which is the $\frac{p}{q}$ periodic orbit for $F_V$ starting at $x_0$.
We need the following lemma, that measures the deviation between the two orbits:
\begin{lemma}\label{lem:OrbitDeviation}
    There exists $r>0$ integer and a constant $C(\varepsilon)$ that depends only on the accenetricity such that if $\normC{W}{r}$ is small enough, for all $k=0,...,q-1$,
    \[|x_k-x'_k|\leq C(\varepsilon)q^2 \normC{W}{1}.\]
\end{lemma}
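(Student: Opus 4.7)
The plan is to compare the two $p/q$-periodic orbits by subtracting their discrete Euler--Lagrange equations and analyzing the resulting linearized boundary value problem for the deviation $\delta_k := x'_k - x_k$. The two sequences satisfy
\begin{equation*}
x_{k+1} - 2x_k + x_{k-1} = V_S'(x_k), \qquad x'_{k+1} - 2x'_k + x'_{k-1} = V_S'(x'_k) + W'(x'_k),
\end{equation*}
and crucially they share the endpoints $x_0 = x'_0$ and $x_q = x'_q = x_0 + p$, so $\delta_0 = \delta_q = 0$. Subtracting these relations and Taylor expanding $V_S'$ around $x_k$ gives the discrete Dirichlet problem
\begin{equation*}
\delta_{k+1} - 2\delta_k + \delta_{k-1} - V_S''(\xi_k)\,\delta_k = W'(x'_k) + O(\delta_k^2),
\end{equation*}
with $\xi_k$ between $x_k$ and $x'_k$.

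To extract the $q^2$ factor I would invert this equation using the Green's function of the discrete Dirichlet Laplacian on $\{0,\dots,q\}$, namely
\begin{equation*}
G(j,k) = \frac{\min(j,k)\bigl(q - \max(j,k)\bigr)}{q},
\end{equation*}
which is nonnegative and satisfies $\sum_j G(j,k) \leq q^2/8$. Applying $G$ to the forcing $W'(x'_\cdot)$ immediately produces a contribution of size $\tfrac{q^2}{8}\normC{W}{1}$, which is the main term. The remaining contributions, coming from $V_S''(\xi_k)\delta_k$ and from the quadratic Taylor remainder, should be treated perturbatively: smallness of $\normC{W}{r}$ makes $\|\delta\|_\infty$ a priori small (by continuous dependence of periodic minimizers on the generating function), and a fixed-point/bootstrap argument then reabsorbs these terms into the main estimate.

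The main obstacle is that the naive bootstrap, which relies only on $\|V_S''\|_\infty = O(\varepsilon)$, would require $q^2\varepsilon$ to be small and hence breaks down once $q \gg \varepsilon^{-1/2}$. To obtain a bound with $C(\varepsilon)$ independent of $q$ one has to replace the Green's function of the bare Laplacian by that of the full Jacobi operator
\begin{equation*}
L\delta_k := -(\delta_{k+1} - 2\delta_k + \delta_{k-1}) + V_S''(x_k)\,\delta_k
\end{equation*}
with Dirichlet conditions on $\{0,\dots,q\}$, and show that $\|L^{-1}\|_{\ell^\infty\to\ell^\infty}\leq C(\varepsilon)\,q^2$ uniformly in $q$. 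The point is that the Suris orbit is an Aubry--Mather minimizer lying on the $C^1$ invariant curve of rotation number $p/q$ supplied by Proposition \ref{proposition:invariantCurves}; hence $L$ is positive semidefinite, and the smoothness of the action--angle coordinates of Section \ref{sec:ActionAngle} lets one control the transverse Jacobi field along the orbit so as to rule out conjugate points and obtain the required uniform bound. Feeding this uniform estimate into the bootstrap closes the argument and yields $|\delta_k|\leq C(\varepsilon)\,q^2\,\normC{W}{1}$.
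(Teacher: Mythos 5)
Your argument takes a genuinely different route from the paper. The paper never sets up a boundary value problem for the deviation. Instead it passes to action--angle coordinates $\psi$ for $F_{V_S}$ (Proposition \ref{prop:GotActionAngle}), in which $F_{V_S}$ is $(\theta,I)\mapsto(\theta+\omega(I),I)$ while $F_V$ differs by remainders of size $\normC{W}{1}$. Iterating this linearly gives $|x_k-x_k'|\leq C\bigl(k|y_0-y_0'|+k^2\normC{W}{1}\bigr)$ with no bootstrap (Lemma \ref{lemma:EstWithoutInit}), and the initial velocity gap $|y_0-y_0'|\leq Cq\normC{W}{1}$ is obtained from the drift of the first integral $I$ along the perturbed orbit together with the sandwiching Claim \ref{claim:invCurveBetween} (Lemma \ref{lemma:EstInitCond}). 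Everything is linear in the perturbation, so the smallness threshold on $\normC{W}{r}$ is uniform in $q$; it is only used (via KAM) to keep the perturbed orbit in the chart where $\psi$ is defined.

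Your proposal correctly identifies the obstruction in the naive Green's-function bound (the $\varepsilon q^2$ factor from the Neumann series), and the intended fix is the right idea, but two points need more than what is written.

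First, the claim $\|L^{-1}\|_{\ell^\infty\to\ell^\infty}\leq C(\varepsilon)q^2$ uniformly in $q$ is the crux, and ``positive semidefiniteness'' plus ``no conjugate points'' is not quantitative enough: no conjugate points gives invertibility, not the $q^2$ rate. What actually does the job is the explicit positive Jacobi field $J_k=(S^k)'(x_0)$ tangent to the $p/q$ invariant curve, which solves $LJ=0$; since the invariant curve consists of $q$-periodic points, $S^q=\mathrm{id}$, and $S$ is conjugate to the rotation by $p/q$ via the analytic angle variable of Section \ref{sec:ActionAngle}, so $c(\varepsilon)\leq J_k\leq C(\varepsilon)$ uniformly in $k$ and $q$. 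One then writes the Green's function of the Dirichlet Jacobi operator in terms of $J$ (reduction of order) and reads off the $q^2$ row-sum bound by comparison with the free case. Without this positive-solution argument the estimate is not established, and the uniformity in $q$ is exactly the content of the lemma.

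Second, even granting the Green's-function bound, the fixed-point step does not close uniformly in $q$. The quadratic remainder satisfies $|Q(\delta_k)|\leq C\varepsilon\delta_k^2$, so $\Phi(\delta)=-L^{-1}[W'(x'_\cdot)+Q(\delta)]$ maps a ball of radius $R$ into itself only if $C(\varepsilon)q^2\normC{W}{1}+C(\varepsilon)q^2\varepsilon R^2\leq R$, which forces $R\lesssim(\varepsilon q^2)^{-1}$ and hence $q^4\varepsilon\normC{W}{1}\lesssim 1$. So the smallness threshold you obtain on $\normC{W}{1}$ decays like $q^{-4}$, whereas the lemma asserts a $q$-independent threshold on $\normC{W}{r}$. (This weaker version would in fact still feed into the proof of Theorem \ref{thm:LocalRigidity}, because there $q$ is cut off at $q_0\sim\normC{W}{1}^{-1/5}$ and $q_0^4\normC{W}{1}\to0$ --- but that is a coincidence of the later exponent choices, and the lemma as stated is stronger and requires a different mechanism.) The paper avoids this issue entirely because its estimates are linear in the deviation; it never needs to absorb a quadratic term.
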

We prove the lemma later. 
Right now, we will use it to estimate the action deviation.
More precisely, we prove the following:
\begin{lemma}\label{lem:ActionEstimate}
    For $W$ as above and a given $x_0\in[0,1]$, if $F_V$ has a minimal\footnote{The orbits we consider are minimal as they lie on invariant curves of $F_V$} periodic orbit of rotation number $\frac{p}{q}$ starting at $x_0$, $x'_0=x_0,x'_1,...,x'_q=x_0+p$, then the following estimate holds true:
    \begin{equation}\label{eq:ActionEstimate}
        |A-A_S-\sum_{k=0}^{q-1}W(x_k)|\leq C(\varepsilon)q^5 \normC{W}{1}^2.
    \end{equation}
   Here $x_1,...,x_q$  are the coordinates of the $\frac{p}{q}$ periodic orbit for $F_S$ that starts at $x_0$, $A=A_V$ denotes the action of the periodic orbit for $F_V$ and $A_S=A_{V_S}$ denotes the action of the corresponding orbit for $F_S$. 
   Here $C(\varepsilon)$ is some constant that depends only on $\varepsilon$.
\end{lemma}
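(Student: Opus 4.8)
The plan is to compare the two actions directly using the generating function $H_V(x,x') = \frac{1}{2}(x'-x)^2 + V(x)$, writing $A - A_S = \sum_{k=0}^{q-1}\left[H_V(x'_k,x'_{k+1}) - H_{V_S}(x_k,x_{k+1})\right]$. I would split this into two contributions: first, the change of \emph{potential} at fixed configuration, $\sum_k \left[V(x'_k) - V_S(x'_k)\right] = \sum_k W(x'_k)$; and second, the change due to moving the \emph{configuration} from $(x_k)$ to $(x'_k)$, namely $A_{V_S}(x') - A_{V_S}(x)$ where I write $A_{V_S}(z) = \sum_k H_{V_S}(z_k, z_{k+1})$ for a $q$-periodic configuration $z$. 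Then I rewrite the target quantity as
\[
A - A_S - \sum_{k=0}^{q-1} W(x_k) = \left[A_{V_S}(x') - A_{V_S}(x)\right] + \sum_{k=0}^{q-1}\left[W(x'_k) - W(x_k)\right].
\]

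For the second bracket, a first-order Taylor estimate gives $|W(x'_k) - W(x_k)| \leq \normC{W}{1}\,|x'_k - x_k|$, and Lemma \ref{lem:OrbitDeviation} bounds $|x'_k - x_k| \leq C(\varepsilon) q^2 \normC{W}{1}$; summing over the $q$ indices yields a bound of order $q^3 \normC{W}{1}^2$, which is absorbed into the claimed $q^5 \normC{W}{1}^2$. For the first bracket, the key point is that $x = (x_k)$ is a \emph{critical configuration} of the action functional $A_{V_S}$ (it is a periodic orbit of $F_{V_S}$), so the first-order term in the Taylor expansion of $A_{V_S}$ around $x$ vanishes, and we are left with a second-order estimate: $|A_{V_S}(x') - A_{V_S}(x)| \leq \frac{1}{2}\sup\|D^2 A_{V_S}\| \cdot \|x' - x\|^2$, where the norm is over the relevant ball. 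The Hessian of $A_{V_S}$ is the tridiagonal (cyclic) matrix with entries built from $\partial_{11}H + \partial_{22}H$ on the diagonal and $\partial_{12}H$ off-diagonal; each entry is bounded by a constant depending only on $\varepsilon$ (since $V_S$ has eccentricity $\varepsilon$ and $H_{V_S}$ is smooth), so the operator norm of this $q\times q$ matrix is $O(q)$ times a constant, i.e. $\|D^2 A_{V_S}\| \leq C(\varepsilon) q$. Combining with $\|x' - x\|^2 \leq q \cdot \max_k |x'_k - x_k|^2 \leq C(\varepsilon) q \cdot q^4 \normC{W}{1}^2 = C(\varepsilon) q^5 \normC{W}{1}^2$ gives a bound of order $q^6 \normC{W}{1}^2$ — slightly worse than claimed, so I would instead be more careful: estimate $|A_{V_S}(x') - A_{V_S}(x)|$ as a sum of $q$ local terms $|H_{V_S}(x'_k,x'_{k+1}) - H_{V_S}(x_k,x_{k+1}) - (\text{first order})|$, each of which is $O(\max(|x'_k - x_k|^2, |x'_{k+1}-x_{k+1}|^2)) = O(q^4 \normC{W}{1}^2)$, and the cross terms from summing the vanishing first-order part telescope/cancel by criticality; this gives the cleaner $q \cdot q^4 = q^5$ bound.

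The main obstacle I anticipate is making the criticality cancellation rigorous at the level of the second-order remainder: one must be careful that the first-order term $\sum_k \left[\partial_2 H_{V_S}(x_{k-1},x_k) + \partial_1 H_{V_S}(x_k,x_{k+1})\right](x'_k - x_k)$ genuinely vanishes (it does, since $(x_k)$ is an orbit of $F_{V_S}$, i.e. satisfies exactly the Euler--Lagrange equation $\partial_2 H_{V_S}(x_{k-1},x_k) + \partial_1 H_{V_S}(x_k,x_{k+1}) = 0$), and that the remaining second-order Lagrange remainder is controlled uniformly in $k$ and in the base point — which requires that both orbits stay in a fixed compact region of the cylinder (guaranteed since both lie on invariant curves at bounded height, using the twist-map structure and the smallness of $\varepsilon$ and $\normC{W}{r}$) so that all derivatives of $H_{V_S}$ up to order $2$ are bounded by $C(\varepsilon)$ there. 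Everything else is a routine bookkeeping of powers of $q$.
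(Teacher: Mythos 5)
Your proposal is correct and follows essentially the same route as the paper: the paper Taylor-expands each summand of $A$ around the reference orbit $(x_k)$, isolates a first-order ``middle row'' which is exactly the first variation of $A_{V_S}$ at $x$ in direction $\Delta=x'-x$ and vanishes by summation by parts together with the Frenkel--Kontorova equation, and bounds the quadratic remainder by $q\cdot\bigl(q^2\normC{W}{1}\bigr)^2$ using Lemma~\ref{lem:OrbitDeviation} --- which is precisely your decomposition and your ``more careful'' local second-order estimate. A minor side remark: your initial global Hessian bound $\|D^2 A_{V_S}\|\le C(\varepsilon)q$ is too pessimistic, since a $q\times q$ cyclic tridiagonal matrix with $O(1)$ entries has $\ell^2$ operator norm $O(1)$ (e.g.\ by the Schur test), so even the crude global route would already have given the claimed $q^5$ bound; but the local estimate you fall back on is what the paper does in any case.
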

\begin{proof}
    We start with the computation of $A$.
    \begin{gather*}
        A = \sum_{k=0}^{q-1}\frac{1}{2}(x'_{k+1}-x'_k)^2+V(x'_k)=\sum_{k=0}^{q-1}\frac{1}{2}[(x_{k+1}-x_k)+(x'_{k+1}-x_{k+1}+x_k-x'_k)]^2+\\
        +V_S(x'_k)+W(x'_k)=\sum_{k=0}^{q-1}[\frac{1}{2}(x_{k+1}-x_k)^2+V_S(x'_k)]+\sum_{k=0}^{q-1}W(x'_k)+\\
        +\sum_{k=0}^{q-1}[(x'_{k+1}-x_{k+1}+x_k-x'_k)(x_{k+1}-x_k)+\frac{1}{2}(x'_{k+1}-x_{k+1}+x_k-x'_k)^2].
    \end{gather*}
    Write $\Delta_k=x'_k-x_k$, and use Taylor expansion of order $2$ of $V_S$ around $x_k$, and of order $1$ of $W$ around $x_k$:
    \begin{equation}\label{eq:TaylorVW}
        \begin{cases}
            V_S(x'_k)=V_S(x_k)+\Delta_kV'_S(x_k)+\frac{1}{2}V''(\xi_k)\Delta_k^2,\\
            W(x'_k)=W(x_k)+\Delta_k W'(\eta_k),
        \end{cases}
    \end{equation}
    for some unknown $\xi_k,\eta_k$.
    Therefore we can rearrange the above formula:
    \begin{multline}\label{eq:AButNice}
        A=\sum_{k=0}^{q-1}[\frac{1}{2}(x_{k+1}-x_k)^2+V_S(x_k)+W(x_k)]\\
        +\sum_{k=0}^{q-1}[(\Delta_{k+1}-\Delta_k)(x_{k+1}-x_k)+V_S'(x_k)\Delta_k]+\\
        +\sum_{k=0}^{q-1}[\frac{1}{2}(\Delta_{k+1}-\Delta_k)^2+\frac{1}{2}V_S''(\xi_k)\Delta_k^2+W'(\eta_k)\Delta_k.]
    \end{multline}
    The first two summands in the top line of \eqref{eq:AButNice} sum to $A_S$, and the sum over $W(x_k)$ appears in \eqref{eq:ActionEstimate}.
    Hence, the left hand side of \eqref{eq:ActionEstimate} is equal to the bottom two rows of \eqref{eq:AButNice}.
    Next we claim that the middle row of \eqref{eq:AButNice} vanishes.
    Indeed,
    \begin{gather*}
        \sum_{k=0}^{q-1}(\Delta_{k+1}-\Delta_k)(x_{k+1}-x_k)=\sum_{k=0}^{q-1}\Delta_{k+1}(x_{k+1}-x_k)-\sum_{k=0}^{q-1}\Delta_k(x_{k+1}-x_k)=\\
        =\sum_{k=1}^{q}\Delta_k(x_k-x_{k-1})-\sum_{k=0}^{q-1}\Delta_k(x_{k+1}-x_k)=\Delta_q(x_q-x_{q-1})-\\
        -\Delta_0(x_1-x_0)-\sum_{k=1}^{q-1}\Delta_k(x_{k+1}-2x_k+x_{k-1}).
    \end{gather*}
    Note that $\Delta_q=\Delta_0=0$ since both periodic orbits have the same start and end point.
    Next, since $\set{x_k}$ is an orbit for the map $F_S$, we have the Frenkel-Kontorova equation:
    \[x_{k+1}-2x_k+x_{k-1}=V_S'(x_k)\]
    Hence the middle row of \eqref{eq:AButNice} sums to zero.
    To estimate the last row, we use Lemma \ref{lem:OrbitDeviation}, to conclude that $|\Delta_k|\leq C(\varepsilon)q^2 \normC{W}{1}$.
    This clearly implies that the bottom row of \eqref{eq:AButNice} is bounded by the right hand side of \eqref{eq:ActionEstimate}.
    
\end{proof}

Now we would like to prove Lemma \ref{lem:OrbitDeviation}.
The idea of the proof is to estimate the deviation of an orbit of $F_V$ 
in action-angle coordinates adapted to $F_S$. 
More precisely, we consider a change of coordinates 
$\psi(\theta,I)=(x(\theta,I),y(\theta,I))$ 
such that
\begin{equation}
    \label{eq:ActAngFS}
    \psi^{-1}\circ F_S\circ \psi (\theta,I)=(\theta+\omega(I),I).
\end{equation}
It follows that, where it is defined we can write
    \begin{equation}
    \label{eq:ActAngFV}
    F_V\circ\psi(\theta,I)=F_S\circ\psi(\theta,I)+W'(x(\theta,I))(1,1)=\psi(\theta+\omega(I),I)+W'(x(\theta,I))(1,1),
    \end{equation}
    and therefore
    \[\psi^{-1}\circ F_V\circ \psi(\theta,I)=(\theta+\omega(I)+R_1(\theta,I),I+R_2(\theta,I))\]
    with $\normC{R_1}{1},\normC{R_2}{1}\leq C\normC{W}{1}$ and the constant $C$ depends only on the Suris potential $V_S$.

    We can assume that $\psi$ is defined on an open set $\mathcal U$ containing $[I_0,I_1]\times \R$ where $I_0<I_1$ define two invariant curves $\gamma_0 = \im \psi(I_0,\cdot)$  and $\gamma_1 = \im \psi(I_1,\cdot)$ of $F_S$ of respective Diophantine rotation numbers $\omega_0<\omega_1$ with $\omega_0\in (0,1/6)$ and $\omega_1\in (1/3,1/2)$.

\begin{lemma}
     There exists $\delta>0$ and an integer $r>0$ such that if $\normC{V-V_S}{r}\leq \delta$, any minimizing orbit of $F_V$ is included in $\mathcal U$.
\end{lemma}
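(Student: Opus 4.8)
The statement asserts that for a sufficiently small $C^r$-perturbation $W=V-V_S$, every minimizing orbit of $F_V$ stays inside the open set $\mathcal U$, which is bounded by two invariant curves $\gamma_0,\gamma_1$ of $F_S$ with Diophantine rotation numbers $\omega_0\in(0,1/6)$ and $\omega_1\in(1/3,1/2)$. The natural approach is a \emph{confinement} argument: show that $\gamma_0$ and $\gamma_1$ survive as invariant curves (or at least as barriers) for the perturbed map $F_V$, and then invoke the fact that a minimizing orbit of rotation number in $[1/6,1/3]$ cannot cross an invariant curve of a twist map. First I would recall that by the Birkhoff theory of twist maps, invariant curves are graphs, and minimizing (Aubry--Mather) orbits of a given rotation number $\rho$ lie in the region of the cylinder ``between'' any two invariant circles whose rotation numbers bracket $\rho$; in particular an orbit of rotation number $\rho\in[1/6,1/3]$ cannot escape the region bounded below by an invariant circle of rotation number $<1/6$ and above by one of rotation number $>1/3$ (ordering of rotation numbers for graphs, see \cite{Bangert1988MatherSF,gole2001symplectic}). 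So it suffices to produce, for $F_V$, two invariant circles $\tilde\gamma_0,\tilde\gamma_1$ with rotation numbers still in $(0,1/6)$ and $(1/3,1/2)$ respectively, lying in $\mathcal U$.

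\textbf{Key steps.} The existence of $\tilde\gamma_0,\tilde\gamma_1$ follows from KAM theory / Moser's invariant curve theorem for area-preserving twist maps. Here is where the work sits. In the action-angle coordinates $\psi$ for $F_S$ set up just before the statement, we have from \eqref{eq:ActAngFV} that $\psi^{-1}\circ F_V\circ\psi(\theta,I)=(\theta+\omega(I)+R_1(\theta,I),\,I+R_2(\theta,I))$ with $\normC{R_1}{1},\normC{R_2}{1}\le C\normC{W}{1}$; upgrading the Taylor/analyticity estimates one gets $\normC{R_j}{r-1}\le C\normC{W}{r}$. Since $\gamma_0$ (resp.\ $\gamma_1$) is an invariant curve of $F_S$ of Diophantine rotation number $\omega_0$ (resp.\ $\omega_1$), Moser's twist-map theorem (in the finite-differentiability version, which needs $F_V$ to be of class $C^{r_0}$ for some explicit $r_0$, e.g.\ $r_0=5$ or a bit more) guarantees that for $\normC{W}{r}$ small enough — so that the perturbation $R_j$ of the integrable-looking map near $\gamma_0$ is small in the required norm — there persist invariant curves $\tilde\gamma_0,\tilde\gamma_1$ of $F_V$ with the \emph{same} rotation numbers $\omega_0,\omega_1$, $C^1$-close to $\gamma_0,\gamma_1$, hence contained in $\mathcal U$ provided $\mathcal U$ was chosen as a neighborhood of $[I_0,I_1]\times\R$ of definite size. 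This fixes the integer $r$: take $r=\max(r_0, \text{whatever Lemma \ref{lem:OrbitDeviation} and the rest need})$, consistent with the $r\ge 23$ of Theorem \ref{thm:LocalRigidity}. Then any minimizing orbit of $F_V$ has some rotation number $\rho$; if $\rho\in[1/6,1/3]$ it is trapped between $\tilde\gamma_0$ and $\tilde\gamma_1$ by the graph/ordering property, hence in $\mathcal U$. Strictly, one should note that minimizing orbits of \emph{all} rotation numbers in the relevant range are what appear in the action estimate of Lemma \ref{lem:ActionEstimate} — but the $r_q$ constructed in Section \ref{sec:FourierBasis} all lie in $[1/6,1/3]$, so confinement between $\tilde\gamma_0$ ($\omega_0<1/6$) and $\tilde\gamma_1$ ($\omega_1>1/3$) is exactly what is needed.

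\textbf{Main obstacle.} The delicate point is the KAM step: one must (i) verify that $F_S$, written in action-angle coordinates, is a \emph{nondegenerate} twist map near $\gamma_0$ and $\gamma_1$ — i.e.\ that $\omega(I)$ is genuinely monotone there, which follows from the twist condition on $F_V$ inherited from $F_S$ together with the analytic formulae \eqref{eq:ActionAngleFormula} — and (ii) keep careful track of the finite-differentiability threshold, since Moser's theorem with only $C^r$ regularity demands $r$ above an explicit bound and loses derivatives; one also must be sure $W$ being small in $C^r$ (not just $C^1$) is what we are allowed to assume, which it is, by the hypothesis $\normC{V-V_S}{r}\le\delta$. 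A cleaner alternative, if one wishes to avoid quoting Moser, is: the Suris map $F_S$ is \emph{rationally integrable}, so by Birkhoff's theorem its invariant curves $\gamma_0,\gamma_1$ are graphs; a small $C^1$-perturbation of a twist map that possesses an invariant circle of Diophantine rotation number still possesses one nearby — this is precisely the content of the persistence theorem — and there is no way around using some such persistence result. I would therefore present the proof as: (1) reduce to confinement via the ordering of rotation numbers of graphs; (2) apply the KAM persistence theorem to $\gamma_0$ and $\gamma_1$ using the estimate $\normC{R_j}{r-1}\le C\normC{W}{r}$; (3) choose $\delta,r$ accordingly and conclude. The routine calculation I would not grind through is the explicit derivative-counting in step (2).
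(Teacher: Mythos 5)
Your proof takes essentially the same route as the paper: invoke KAM/Moser persistence of the Diophantine invariant circles $\gamma_0,\gamma_1$ of $F_S$ under a $C^r$-small perturbation of the potential, and then confine the minimizing orbits between the surviving circles $\tilde\gamma_0,\tilde\gamma_1\subset\mathcal U$ by the Birkhoff ordering property of rotation numbers on graphs. The paper states the persistence result as an auxiliary KAM proposition (citing Herman and P\"oschel) and concludes exactly as you do; the only cosmetic difference is that you phrase the smallness of the perturbation in the action-angle variables $R_1,R_2$ whereas the paper applies KAM directly to $F_V$ versus $F_S$.
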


\begin{proof}
    Let us consider the following proposition which follows from general KAM theory on area-preserving twist maps \cite{Herman,Poeschel2}:

    \begin{proposition}[KAM theorem]
    \label{proposition:KAM_theorem}
        Let $\ell>0$ be an integer. There exists $\delta>0$ and an integer $r>0$ such that if
        \[
        \|V-V_S\|_{ C^r} \leq \delta
        \]
        then $F_V$ has also two invariant curves $\gamma_0'$ and $\gamma_1'$ of rotation numbers $\omega_0$ and $\omega_1$ which are $ C^{\ell}$ smooth and such that 
        \[
        \|\gamma_i'-\gamma_i\|_{ C^{\ell}} \qquad\qquad i=0,1
        \]
        can be taken arbitrary small when $\|V-V_S\|_{ C^r}\to 0$.
    \end{proposition}

    We can therefore fix an arbitrary $\ell>0$ -- since $\gamma_0$ and $\gamma_1$ are analytic, and consider $r>0$ and $\delta>0$ as in Proposition \ref{proposition:KAM_theorem} and such that the curves $\gamma_0',\gamma_1'$ are included in $\mathcal U$ for $\|V-V_S\|_{ C^r} \leq \delta$. 

    Now the region delimited by the curve $\gamma_0'$ and $\gamma_1'$ is invariant by $F_V$ and included in $\mathcal U$. Hence any minimal set of $F_V$ of rotation number $\omega\in(\omega_0,\omega_1)$ is contained in $\mathcal U$. 
\end{proof}

Until the rest of the proof, we assume that $\normC{V-V_S}{r}\leq \delta$ so that the minimal orbits of $F_V$ of rotation numbers in $(\omega_0,\omega_1)$ lie in $\mathcal{U}$.

    \begin{lemma}
    \label{lemma:EstWithoutInit}
        There exists a constant $C=C(V_S)>0$ such that for all $k=0,...,q-1$,
        \[
        |x'_k-x_k| \leq C(k|y_0'-y_0|+k^2\normC{W}{1}).
        \]
    \end{lemma}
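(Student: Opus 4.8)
The plan is to push the two orbits through the action--angle chart $\psi$ of $F_S$ and to exploit the fact that, in these coordinates, the \emph{unperturbed} dynamics leaves the action variable $I$ constant; consequently $I$ can drift only because of the perturbation $W$, and the angle error then grows only polynomially in time. By the previous lemma (which rests on the KAM Proposition \ref{proposition:KAM_theorem}), once $\normC{V-V_S}{r}\le\delta$ the minimizing $\tfrac pq$-orbit $x'_0=x_0,x'_1,\dots$ of $F_V$ is trapped between two $F_V$-invariant curves $\gamma_0',\gamma_1'$ which are $C^{\ell}$-close to $\gamma_0,\gamma_1$; in particular it stays in $\mathcal U$ and remains confined to a fixed compact range of $I$ (independent of $W$) on which $\omega'$ and $D\psi$ are bounded by constants depending only on $V_S$, so $\psi^{-1}\circ F_V\circ\psi$ can be iterated along it. Set $(\theta_k,I_k)=\psi^{-1}(F_V^{k}(x_0,y'_0))$ and $(\tilde\theta_k,\tilde I_k)=\psi^{-1}(F_S^{k}(x_0,y_0))$; by \eqref{eq:ActAngFS}--\eqref{eq:ActAngFV},
\[
\begin{cases}\theta_{k+1}=\theta_k+\omega(I_k)+R_1(\theta_k,I_k),\\ I_{k+1}=I_k+R_2(\theta_k,I_k),\end{cases}\qquad\begin{cases}\tilde\theta_{k+1}=\tilde\theta_k+\omega(\tilde I_k),\\ \tilde I_{k+1}=\tilde I_k,\end{cases}
\]
with $|R_1|,|R_2|\le C_0\normC{W}{1}$ and $C_0=C_0(V_S)$.

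Next I would run the coupled estimate. Writing $b_k=|I_k-\tilde I_k|$, subtracting the $I$-equations gives $b_{k+1}\le b_k+C_0\normC{W}{1}$, hence $b_k\le b_0+kC_0\normC{W}{1}$. Writing $a_k=|\theta_k-\tilde\theta_k|$ and using $|\omega(I_k)-\omega(\tilde I_k)|\le\|\omega'\|_{C^0}\,b_k$ on the fixed compact $I$-range,
\[
a_{k+1}\le a_k+\|\omega'\|_{C^0}\,b_k+C_0\normC{W}{1}\le a_k+\|\omega'\|_{C^0}\bigl(b_0+kC_0\normC{W}{1}\bigr)+C_0\normC{W}{1},
\]
and summing from $0$ to $k-1$ yields $a_k\le a_0+k\|\omega'\|_{C^0}b_0+C_1k^2\normC{W}{1}$ for some $C_1=C_1(V_S)$. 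Since $x'_0=x_0$, the base points $\psi^{-1}(x_0,y'_0)$ and $\psi^{-1}(x_0,y_0)$ have equal $x$-coordinate, so Lipschitz continuity of $\psi^{-1}$ on $\mathcal U$ gives $a_0,b_0\le L|y'_0-y_0|$ with $L=L(V_S)$. Finally, writing $x'_k=\hat x(\theta_k,I_k)$ and $x_k=\hat x(\tilde\theta_k,\tilde I_k)$ where $\hat x$ is the ($\theta$-lifted) $x$-component of $\psi$, the mean value theorem gives $|x'_k-x_k|\le\|\partial_\theta\hat x\|_{C^0}a_k+\|\partial_I\hat x\|_{C^0}b_k$; plugging in the bounds for $a_k,b_k$ and absorbing lower-order terms using $1\le k\le k^2$ for $k\ge1$ (the case $k=0$ being trivial since $x'_0=x_0$) gives $|x'_k-x_k|\le C\bigl(k|y'_0-y_0|+k^2\normC{W}{1}\bigr)$.

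The delicate points are twofold. First, one must know a priori that the whole $F_V$-orbit stays in the chart domain $\mathcal U$ and in a fixed compact window of $I$: this is precisely what the preceding lemma and Proposition \ref{proposition:KAM_theorem} supply, together with the confinement of a minimizing orbit of intermediate rotation number between two invariant curves, and it is what makes all the constants ($C_0$, $\|\omega'\|_{C^0}$, $L$, $\|\partial_\theta\hat x\|_{C^0}$, $\|\partial_I\hat x\|_{C^0}$) depend on $V_S$ alone. Second — and this is the real crux — one must resist estimating $G(\theta,I)=(\theta+\omega(I),I)$ by its Lipschitz constant $1+\|\omega'\|_{C^0}>1$, which would only yield a useless bound growing like $(1+\|\omega'\|_{C^0})^k$; the gain comes entirely from the triangular structure of $G$ (its $I$-component is unperturbed, so $\tilde I_k$ is constant), which decouples the action equation and forces the angle error to grow merely like $k^2$.
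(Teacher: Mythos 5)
Your proposal is correct and follows essentially the same approach as the paper: passing to the action--angle chart of $F_S$, exploiting that the $I$-component of the unperturbed map is exactly preserved so that the action error accumulates linearly and the angle error quadratically, and transferring back to $x$-coordinates by Lipschitz continuity of $\psi$, $\psi^{-1}$, using $x_0=x_0'$ to control the initial discrepancy. The extra discussion of orbit confinement to $\mathcal U$ and of why the na\"{\i}ve Lipschitz bound would fail is sound and matches the role of the preceding lemma (Proposition \ref{proposition:KAM_theorem}) in the paper.
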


    \begin{proof}
        Denote by $(\theta_k,I_k)=\psi^{-1}(x_{k+1},y_{k+1})$ (where $I_k=I_0$ for all $k$) and $(\theta'_k,I'_k)=\psi^{-1}(x'_{k+1},y'_{k+1})$, for all $k=0,\ldots,q-1$.
    We also denote by $L$ the Lipschitz constant of the function $\omega$.
    From the expressions of $F_S$ and $F_V$ in $(I,\theta)$ coordinates given in Equations \eqref{eq:ActAngFS} and \eqref{eq:ActAngFV}, we have the following estimates:
    \[      |I_{k+1}-I'_{k+1}|=|I_k-(I_k'+R_2)|\leq |I_k-I'_k|+C\normC{W}{1}. \]
    This implies that
    \begin{equation}\label{eq:IkEstimate}
         |I_k-I'_k|\leq |I_0-I'_0|+k\normC{W}{1}.
    \end{equation}
    
Similarly,
    \begin{multline*}
|\theta_{k+1}'-\theta_{k+1}| =|(\theta'_k+\omega(I'_k)+R_1)-(\theta_k+\omega(I_k))|\leq \\
\leq|\theta_k'-\theta_k|+L|I_k'-I_k| + C\|W\|_{C^1}
\leq |\theta_k'-\theta_k| +L|I_0'-I_0|+ (k+1)C'\|W\|_{C^1},
\end{multline*}
which implies that 
\begin{equation}\label{eq:ThetaKDeviation}
    |\theta'_k-\theta_k|\leq |\theta'_0-\theta_0|+Lk|I'_0-I_0|+C'k^2\normC{W}{1}.
\end{equation}
Finally, if $M$ denotes the larger of the $C^1$ norm of $\psi$ and $\psi^{-1}$ (which depends only on $V_S$), we get, using \eqref{eq:IkEstimate},\eqref{eq:ThetaKDeviation} that
\begin{multline}\label{eq:DeriveXkEstimate}
 |x'_k-x_k|\leq M(|\theta_{k-1}-\theta'_{k-1}|+|I_{k-1}-I'_{k-1}|)\\
 \leq M(|\theta_0-\theta'_0|+ Lk|I_0-I'_0|+C'k^2\normC{W}{1})\\
 \leq MLk(|\theta_0-\theta'_0|+|I_0-I'_0|)+C'k^2\normC{W}{1}\leq M^2Lk(|x_0-x'_0|+|y_0'-y_0|)+C'k^2\normC{W}{1}.
\end{multline}
The result follows by setting $C = MLk$ and observing that $x_0=x_0'$.
    \end{proof}

It remains to show the following lemma: 

\begin{lemma}
    \label{lemma:EstInitCond}
    There exists $C=C(V_S)>0$ for which
        \begin{equation}\label{eq:EstimateAtPoint}
            |y_0-y'_0|\leq Cq\normC{W}{1}.
        \end{equation}
\end{lemma}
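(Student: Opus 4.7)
The plan is to work in the action-angle coordinates $(\theta,I)=\psi^{-1}(x,y)$ for $F_S$ introduced in \eqref{eq:ActAngFS}, and to extract a constraint on $I_0'$ from the fact that both the Suris orbit and the $F_V$-orbit are $(p,q)$-periodic and start at the same abscissa $x_0$. Setting $(\theta_k,I_k)=\psi^{-1}(x_k,y_k)$ and $(\theta_k',I_k')=\psi^{-1}(x_k',y_k')$ in the universal cover, the $F_S$-orbit satisfies $I_k\equiv I_0$ with $\omega(I_0)=p/q$, while the $(p,q)$-periodicity of the $F_V$-orbit lifts to $\theta_q'-\theta_0'=p$ and $I_q'=I_0'$.

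The key step is to show $|I_0-I_0'|\leq Cq\normC{W}{1}$. Summing the angular iteration from \eqref{eq:ActAngFV} along the $F_V$-orbit and subtracting the identity $q\omega(I_0)=p$ gives
\[
\sum_{k=0}^{q-1}\bigl(\omega(I_k')-\omega(I_0)\bigr)=-\sum_{k=0}^{q-1}R_1(\theta_k',I_k'),
\]
whose right-hand side is bounded in absolute value by $Cq\normC{W}{1}$. Telescoping the $I$-component of \eqref{eq:ActAngFV} gives $|I_k'-I_0'|\leq Ck\normC{W}{1}$. I then split $\omega(I_k')-\omega(I_0)=(\omega(I_k')-\omega(I_0'))+(\omega(I_0')-\omega(I_0))$ and, using the Lipschitz constant $L$ of $\omega$, obtain
\[
q\bigl|\omega(I_0')-\omega(I_0)\bigr|\leq Cq\normC{W}{1}+L\sum_{k=0}^{q-1}|I_k'-I_0'|\leq Cq\normC{W}{1}(1+Lq).
\]
Dividing by $q$ and invoking the twist condition -- which guarantees that $\omega^{-1}$ is Lipschitz on the compact range of relevant values -- produces the claimed bound on $|I_0-I_0'|$.

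To conclude, I write $y=Y(x,I)$ on the annular region $\mathcal U$; this is well-defined and smooth with bounded $\partial_I Y$ since $\psi$ is a diffeomorphism and $F_S$ is twist. Because both orbits start at the same $x_0$, this yields $|y_0-y_0'|\leq C|I_0-I_0'|\leq Cq\normC{W}{1}$, as required. The main obstacle is precisely the gain of a factor of $q$ over the naive estimate: a direct Lipschitz comparison of $\omega(I_k')$ with $\omega(I_0)$ would only produce $|I_0-I_0'|=O(q^2\normC{W}{1})$, which would be too weak for the action estimate in Lemma \ref{lem:ActionEstimate}. The linear-in-$q$ bound is recovered by recentering $\omega$ at $I_0'$ so that the global quantity $\omega(I_0')-\omega(I_0)$ factors out of the sum with a prefactor $q$, and the $O(q^2\normC{W}{1})$ residual can then be absorbed into the left-hand side.
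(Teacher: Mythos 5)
Your proof is correct, but it takes a genuinely different route from the paper's. You work entirely in the action--angle coordinates $(\theta,I)$ of $F_S$ and exploit the winding constraint $\theta_q'-\theta_0'=p$ coming from $(p,q)$-periodicity: subtracting $q\,\omega(I_0)=p$ and recentering $\omega$ at $I_0'$ (so that the sum factors out $q\bigl(\omega(I_0')-\omega(I_0)\bigr)$ with an $O(q^2\|W\|_{C^1})$ residual) gives $|\omega(I_0')-\omega(I_0)|=O(q\|W\|_{C^1})$, and then the twist condition inverts $\omega$. The paper instead works with the explicit first integral $I$ from \eqref{equation:first_integral_I_2} rather than the action variable: it introduces the auxiliary sequence $y_k^{\pm}$, observes that $F_S(x_k',y_k^+)=(x_{k+1}',y_{k+1}^-)$, and establishes (Claim~\ref{claim:invCurveBetween}) that the Birkhoff curve $y=y_{p/q}(x)$ must thread between $y_{k_0}^-$ and $y_{k_0}^+$ at some index $k_0$, via a topological argument using the twist condition and the rotation number of $F_S$ on that curve. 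That anchor combined with $|y_k^+-y_k^-|=|W'(x_k')|\le\|W\|_{C^1}$, propagated over $q$ steps, gives $|I_0^--I_0|=O(q\|W\|_{C^1})$ and then $|y_0-y_0'|$ by the mean value theorem. Both yield the same linear-in-$q$ bound; the paper's argument is more geometric and relies on the explicit formula for $\partial_y I$, while yours is more structural and avoids the intermediate topological Claim~\ref{claim:invCurveBetween} at the cost of some care about lifts to the universal cover (you need the action--angle change of variables to have degree one in $\theta$ so that $\theta_q'-\theta_0'=p$ is legitimate, and you implicitly rely on the preceding KAM lemma to keep the $F_V$-orbit inside the domain $\mathcal U$ of $\psi$). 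Both are fine.
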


    The idea is to estimate how much an orbit for $F_V$ fails to be an orbit for $F_S$.
    For that we define 
    \[\begin{cases}
        y_k^+=x'_{k+1}-x'_k-V_S'(x'_k),\\
        y_k^-=x'_k-x'_{k-1},
    \end{cases}\]
    and we observe that it holds that
    \begin{equation}\label{eq:AlmostOrbit}
        F_S(x'_k,y_k^+)=(x'_{k+1},y_{k+1}^-).
    \end{equation}    
    By Birkhoff's Theorem, the invariant curve of rotation number $\frac{p}{q}$ is orbits is a graph of a function $y=y_{\frac{p}{q}}(x)$.
    \begin{claim}\label{claim:invCurveBetween}
        For some $k$, it holds that 
        \[y_k^-\leq y_{\frac{p}{q}}(x'_k)\leq y_{k}^+.\]
    \end{claim}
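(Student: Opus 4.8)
The plan is to argue by contradiction using the monotonicity (twist) structure together with the fact that the true orbit $\{x_k'\}$ of $F_V$ lies on the invariant curve $\gamma=\graph y_{p/q}$ of $F_V$, while $\{(x_k',y_k^{\pm})\}$ is a ``fake'' orbit of $F_S$ that almost closes up. Concretely, suppose the conclusion fails, so that for \emph{every} index $k$ we have either $y_k^- > y_{p/q}(x_k')$ or $y_k^+ < y_{p/q}(x_k')$ — equivalently, the point $(x_k',y_{p/q}(x_k'))$ never lies in the ``vertical interval'' $[y_k^-,y_k^+]$ (or $[y_k^+,y_k^-]$). First I would compare, at a fixed base point $x_k'$, the three points on the vertical fiber: $P_k=(x_k',y_{p/q}(x_k'))$ on the invariant curve of $F_V$, $Q_k^+=(x_k',y_k^+)$, and $Q_k^-=(x_k',y_k^-)$. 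The relation \eqref{eq:AlmostOrbit} says $F_S(Q_k^+)=(x_{k+1}',y_{k+1}^-)$, i.e. $F_S$ sends $Q_k^+$ to a point with the \emph{same} first coordinate $x_{k+1}'$ as $Q_{k+1}^-$ and $P_{k+1}$; and $F_V$ sends $P_k$ to $P_{k+1}$ because $\gamma$ is $F_V$-invariant and $\{x_k'\}$ is the orbit on it.

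The key step is to exploit the twist condition to turn a uniform inequality "$y_k^{\pm}$ on one fixed side of $y_{p/q}(x_k')$" into a contradiction with periodicity. Since $F_S(x,y)=(x+y-\text{(small)},\dots)$ has $\partial Q/\partial y>0$, for two points on the same vertical fiber the one with larger $y$-coordinate is mapped to the one with larger $x$-coordinate. I would track the sign of $x_{k}'-\tilde x_k$, where $\tilde x_k$ is the first coordinate of $F_S^k$ applied to a point of the true $F_S$-orbit on the invariant curve through the appropriate rotation number — or, more in the spirit of the surrounding argument, directly compare $F_S(P_k)$ with $P_{k+1}=F_V(P_k)$: these differ by exactly $W'(x_k')(1,1)$ by \eqref{eq:ActAngFV}, a quantity controlled by $\normC{W}{1}$. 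If $y_k^- > y_{p/q}(x_k')$ for all $k$, then in particular $Q_k^-$ lies strictly above $P_k$ on each fiber; applying $F_S^{-1}$ (also twist) and using \eqref{eq:AlmostOrbit} backwards, $Q_k^+=F_S^{-1}(x_{k+1}',y_{k+1}^-)$ also lies strictly above the $F_S$-preimage-comparison point, and iterating shows the discrepancy $x_k'$ versus the genuine $F_S$-orbit grows monotonically in $k$; after $q$ steps both $\{x_k'\}$ and the comparison orbit return to $x_0'+p$, forcing the accumulated discrepancy to vanish — a contradiction unless the strict inequality was never uniform. The symmetric case $y_k^+ < y_{p/q}(x_k')$ for all $k$ is handled the same way with reversed orientation.

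Therefore there must exist some index $k$ for which $P_k$ lies between $Q_k^-$ and $Q_k^+$ on the vertical fiber, which is exactly $y_k^-\le y_{p/q}(x_k')\le y_k^+$ (up to swapping endpoints, which one reads off from the sign of $y_k^+-y_k^-=-V_S'(x_k')+(x_{k-1}'-x_k')+\dots$, itself $O(\varepsilon)$ plus a controlled term).

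\medskip
The main obstacle I anticipate is making the ``monotone accumulation then periodic return forces a sign change'' argument fully rigorous: one must be careful that the twist comparison is between orbits of the \emph{same} map on the \emph{same} fiber, and \eqref{eq:AlmostOrbit} mixes $F_S$-dynamics with $F_V$-orbit points. The clean way is probably an intermediate-value / discrete-mean-value argument: the function $k\mapsto \operatorname{sign}\big(y_{p/q}(x_k') - \tfrac12(y_k^++y_k^-)\big)$ cannot be constant over a full period because summing the associated telescoping quantity (which is what the middle row of \eqref{eq:AButNice} computes) around the periodic orbit gives zero, so it must change sign, and at a sign change $y_{p/q}(x_k')$ is trapped between $y_k^-$ and $y_k^+$. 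I would phrase the proof through that telescoping identity rather than through iterating the twist map, since the telescoping identity has essentially already been established in the proof of Lemma \ref{lem:ActionEstimate}.
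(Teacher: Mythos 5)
Your first approach -- orientation-preservation under $F_S$ plus twist, iterated over the period -- is the paper's proof and it works, but there are errors in your setup and your stated preference, the telescoping route, has a gap. A correction first: $y_{p/q}$ is the invariant curve of $F_S$, not of $F_V$ (it is a level set of $F_S$'s integral $I$; see the rest of the proof of Lemma~\ref{lemma:EstInitCond}). Consequently $P_{k+1}\neq F_V(P_k)$: the $F_V$-orbit through $x'_k$ has $y$-coordinate $y'_k = y_k^-$, not $y_{p/q}(x'_k)$. That the curve is $F_S$-invariant is exactly what makes the argument close: $F_S$ preserves the two components of the cylinder minus the curve, so from $F_S(x'_k,y_k^+)=(x'_{k+1},y_{k+1}^-)$ the whole pseudo-orbit $\{(x'_k,y_k^\pm)\}$ lies, under the contradiction hypothesis, strictly on one side; and the twist comparison is between $F_S$ applied to $(x'_k,y_k^+)$ and $F_S$ applied to $(x'_k,y_{p/q}(x'_k))$ -- the \emph{same} map on the \emph{same} fiber, so your worry in the last paragraph does not actually arise. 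Chaining $\tilde\psi(x'_k)<x'_{k+1}$ over $k=0,\dots,q-1$ then contradicts $\tilde\psi^q(x'_0)=x'_0+p=x'_q$.

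The telescoping route you say you would actually use does not work, for two reasons. First, a sign change of $k\mapsto \mathrm{sign}\bigl(y_{p/q}(x'_k)-\tfrac12(y_k^++y_k^-)\bigr)$ does \emph{not} trap $y_{p/q}(x'_k)$ between $y_k^-$ and $y_k^+$: that interval has length $|W'(x'_k)|$, and $y_{p/q}(x'_k)$ can sit strictly on one side of both endpoints at every $k$ while the moving midpoint wanders past it. Second, the vanishing of the middle row of \eqref{eq:AButNice} is the discrete Euler--Lagrange (summation-by-parts) identity for the $F_S$-orbit $\{x_k\}$; it constrains actions, and has no evident link to where the pseudo-orbit ordinates $y_k^\pm$ sit relative to the curve, so it does not force the sign you consider to change. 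What forces a sign change is exactly the twist-plus-periodicity argument of your first route; you should drop the telescoping idea and just tighten that one.
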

    \begin{proof}
        Arguing by contradiction, we assume that one of the inequalities holds in the opposite direction for all $k$, for example that $y_k^->y_{\frac{p}{q}}(x_k')$.
        By \eqref{eq:AlmostOrbit} and the fact that $F_S$ is orientation preserving, it is impossible for an orbit to cross the two sides of an invariant curve.
        Therefore, if $y_k^->y_{\frac{p}{q}}(x'_k)$, then we also have $y_{k-1}^+ > y_{\frac{p}{q}}(x'_{k-1})$.
        Hence we get that $y_k^{\pm}\geq y_{\frac{p}{q}}(x'_k)$.
        Denote by $\psi:S^1\to S^1$ the restriction of $F_S$ to the invariant curve $y=y_{\frac{p}{q}}(x)$, and $\tilde{\psi}$ its lift to $\R$, which is strictly monotone. It satisfies $F(x,y_{\frac{p}{q}}(x))=(\tilde\psi(x),y_{\frac{p}{q}}(\tilde\psi(x)))$ for any $x\in\R$.
        Then by the twist condition and Equation \eqref{eq:AlmostOrbit}, we get that
        $x'_k<\tilde{\psi}(x'_k)<x'_{k+1}$.
        Iterating this argument $q$ times, we get a contradiction to the fact that $\tilde{\psi}^q(x)=x+p$ for all $x\in\R$. 
    \end{proof}
    
We can now prove Lemma \ref{lemma:EstInitCond}:
\begin{proof}[Proof of Lemma \ref{lemma:EstInitCond}]
    For all $k=0,...,q$, we have
    \[|y_k^+-y_k^-|=|x'_{k+1}-2x'_k+x'_{k-1}-V_S'(x'_k)|=|W'(x'_k)|.\]
    Here we used the fact that $\set{x'_k}$ is a solution to the Frenkel-Kontorova equation with potential $V_S+W$.
    From this it follows that for all $k$,
    \[|y_k^+-y_k^-|\leq \normC{W}{1}.\]
    Moreover, if we choose $k_0$ as per Claim \ref{claim:invCurveBetween}, then we also have
    \[y_{k_0}^+-y_{\frac{p}{q}}(x_{k_0}')\leq y_{k_0}^+-y_{k_0}^-\leq \normC{W}{1}.\]
    Now we consider the value of the integral $I$ (see \eqref{equation:first_integral_I_2}) on these points:
    \[I_k^{\pm}=I(x'_k,y_k^{\pm}),\]
    and we also consider $I_0=I(x,y_{\frac{p}{q}}(x))$ %{\color{red}-- what about $I_k=I(x_k',y_{\frac{p}{q}}(x_k'))$ instead?} \textcolor{blue}{don't follow - it's the same value? $I_k=I_0$}
    (this does not depend on $x$ since the invariant curve $y=y_{\frac{p}{q}}(x)$ is a level set of $I$).
    Then
    \begin{equation}\label{eq:IntegralIntegralIneq}
        |I_{k_0}^--I_0|\leq\int\limits_{y_{k_0}^+}^{y_{\frac{p}{q}}(x'_{k_0})}|\frac{\partial I}{\partial y}(x'_{k_0},\eta)|d\eta.
    \end{equation}
    It holds that
    \[\frac{\partial I}{\partial y}=2\pi[\sin(2\pi y)+A\sin(2\pi(x-y))+B\cos(2\pi(x-y))+C\sin(2\pi(2x-y))+D\cos(2\pi(2x-y))].\]
    Hence $|\frac{\partial I}{\partial y}|\leq 2\pi(1+4\varepsilon)$.
    Hence
\begin{equation}\label{eq:integralDeviation}
        |I_{k_0}^--I_0|\leq 2\pi(1+4\varepsilon)\normC{W}{1}.
\end{equation}
     Moreover, the same right hand side is also an upper bound for $|I_k^+-I_k^-|$ for all $k$.
    Also, from \eqref{eq:AlmostOrbit}, and the fact that $I$ is conserved quantity, $I_k^+=I_{k+1}^-$ for all $k$.
    From this we get  that for all $k$,
    \[|I_{k+1}^--I_0|\leq |I_{k+1}^+-I_{k+1}^-|+|I_k^--I_0|.\]
    Then we can chain this inequality with \eqref{eq:integralDeviation}, to get that for all $k$,
    \begin{equation}\label{eq:IntegralDeviationRecursive}
        |I_k^--I_0|\leq 2\pi q(1+4\varepsilon)\normC{W}{1}.
    \end{equation}
    On the other hand, since $y_k'=y_k^-$, applying the Mean Value Theorem together with  \eqref{eq:IntegralDeviationRecursive} for $k=0$ leads to 
    \[
    |y_0'-y_0|\leq\frac{1}{\kappa}|I_0^--I_0|\leq \frac{Cq}{\kappa}\|W\|_{ C^1}
    \]
    where 
    \[
    \kappa = \inf_{(x,y)\in\mathcal U'} |\partial_y I(x,y)|
    \]
    and $\mathcal U'$ is the region bounded by $\gamma_0$ and $\gamma_1$. Note that $\kappa$ depends only on $V_S$, $\omega_0$ and $\omega_1$. Let us estimate it as $\varepsilon\to 0$. We computed that 
    \[
    \partial_y I(x,y) = 2\pi\sin(2\pi y)+\mathcal O(\varepsilon).
    \]
    For $\varepsilon$ sufficiently small, the graphs $\gamma_0$ and $\gamma_1$ are close to being the constants $\omega_0$ and $\omega_1$, respectively.
    Hence
    \[
    \mathcal U'\subset [\omega_0+\mathcal O(\varepsilon), \omega_1+\mathcal O(\varepsilon)]
    \]
    from which we deduce
    \[
    \kappa \geq 2\pi\min_{i=0,1} \sin(2\pi\omega_i)+\mathcal O(\varepsilon).
    \]
    If we define $\kappa_0 = \pi\min_{i=0,1} \sin(2\pi\omega_i)>0$, then $\kappa>\kappa_0$ whenever $\varepsilon>0$ is sufficiently small.
    
    It remains to show that the orbit $(x_k',y_k')_{k}$ lies in $\mathcal U'$ if $W$ is sufficiently small. To do so, it is enough to show that this holds for one $k$. This is a consequence of Claim \ref{claim:invCurveBetween}: since $y_{k_0}^- = y_{k_0}'$, the claim implies that 
    \[
    \text{dist}((x_{k_0}',y_{k_0}'),\text{graph}(y_{\frac{p}{q}}))\leq \|W\|_{ C^0}
    \]
    and hence the result. 
    As a consequence,
    \[
    |y_0'-y_0|\leq\frac{1}{\kappa_0}|I_0^--I_0|\leq \frac{Cq}{\kappa_0}\|W\|_{ C^1}.
    \]
\end{proof}

We can finally prove Lemma \ref{lem:OrbitDeviation}:

\begin{proof}[Proof of Lemma \ref{lem:OrbitDeviation}]
    This is the consequence of Lemma \ref{lemma:EstWithoutInit} combined with Lemma \ref{lemma:EstInitCond}.
\end{proof}

Now we turn to the main result of this section.
\begin{theorem}\label{thm:ActionEstimate}
    Suppose that $F_{V_S+W}$ has an invariant curve of rotation number $r_{|q|} = \frac{p_{|q|}}{|q|}$ (see \eqref{eq:RationalRotNumberCond}), for $|q|\geq 3$. 
    Then, if the eccentricity of $V_S$ is small enough (so that Proposition \ref{proposition:Hilbertbasis} holds), we have:
    \begin{equation}
        |\langle W,f_q\rangle|\leq C(\varepsilon)q^4\normC{W}{1}^2.
    \end{equation}
\end{theorem}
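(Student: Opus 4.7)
The strategy is to interpret $\langle W, f_q\rangle$ as (essentially) the $q$-th Fourier coefficient of $W$ parameterized by the angle coordinate on the $r_{|q|}$-invariant curve of the Suris map $F_S$, and then to extract this coefficient from the action estimate of Lemma \ref{lem:ActionEstimate} by averaging against the phase $e^{-2\pi i q \theta_0}$. The resonance condition $q\, r_{|q|} \in \mathbb{Z}$ built into Definition \ref{def:FourierBasis} will be the crucial ingredient that produces a gain of $|q|$ over the naive bound.

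First, fix $|q|\geq 3$ and let $X(\theta)$ denote the inverse of $\theta_{r_{|q|}}$, i.e. the parametrization of the $r_{|q|}$-invariant curve of $F_S$ by its angle coordinate. By Proposition \ref{prop:GotActionAngle}, $F_S$ acts on this curve by $\theta \mapsto \theta + r_{|q|}$, so the $|q|$-periodic orbit of $F_S$ through $X(\theta_0)$ is $\{X(\theta_0 + k r_{|q|})\}_{k=0}^{|q|-1}$. Plugging Definition \ref{def:FourierBasis} into the inner product \eqref{eq:NewInnerProduct} and substituting $\tau = \theta_{r_{|q|}}(x)$ yields, for $q\geq 3$,
\[
\langle W, f_q\rangle = \int_0^1 W(X(\tau))\, e^{-2\pi i q \tau}\, d\tau,
\]
with the sign of the exponent flipped for $q\leq -3$.

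Next, since by hypothesis both $F_S$ (via Corollary \ref{prop:rotationIntervalIncludesStuff}) and $F_V = F_{V_S+W}$ carry invariant curves of rotation number $r_{|q|}$ consisting of periodic orbits, their actions $A_S$ and $A_V$ are constant along these curves. Applying Lemma \ref{lem:ActionEstimate} to the $F_S$-orbit starting at $X(\theta_0)$ gives
\[
\sum_{k=0}^{|q|-1} W\bigl(X(\theta_0 + k r_{|q|})\bigr) = (A_V - A_S) + E(\theta_0), \qquad |E(\theta_0)| \leq C(\varepsilon)|q|^5\, \|W\|_{C^1}^2,
\]
uniformly in $\theta_0\in[0,1]$. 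Multiplying by $e^{-2\pi i q \theta_0}$ and integrating over $\theta_0$, the constant $A_V - A_S$ integrates to zero (since $q\neq 0$), and the error term contributes at most $C(\varepsilon)|q|^5\|W\|_{C^1}^2$. For the left-hand side, in each summand the substitution $\theta = \theta_0 + k r_{|q|}$ gives a factor $e^{2\pi i q k r_{|q|}}$, which equals $1$ because $q r_{|q|} = \pm p_{|q|}\in\mathbb{Z}$ by construction \eqref{eq:RationalRotNumberCond}. The $|q|$ terms therefore interfere constructively and sum to $|q|\,\langle W, f_q\rangle$. Dividing through by $|q|$ yields the claimed bound $|\langle W, f_q\rangle|\leq C(\varepsilon)\,q^4\,\|W\|_{C^1}^2$.

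The step I view as the true content of the argument has already been done in Lemma \ref{lem:ActionEstimate}; the proof above is an averaging argument that converts a pointwise $q^5$ bound into a $q^4$ bound for a single Fourier mode. The main delicate point, which should be handled carefully, is verifying that the change of variables makes sense globally on $S^1$ (so the boundary terms vanish) and that the defining choice of $r_{|q|}$ ensures $q r_{|q|}\in\mathbb{Z}$ in \emph{all} the cases enumerated after \eqref{eq:RationalRotNumberCond}, including the exceptional small-$q$ ones $q=3,\dots,8$; a glance at the table shows this holds. The case $q\leq -3$ is identical after conjugation.
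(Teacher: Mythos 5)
Your proof is correct and follows essentially the same route as the paper's: integrate the action estimate of Lemma \ref{lem:ActionEstimate} against $e^{-2\pi i q\theta_0}$, observe that the constant $A_V-A_S$ drops out, and exploit the resonance $q\,r_{|q|}\in\mathbb{Z}$ so that the $|q|$ shifted summands each contribute the same integral, trading one power of $q$ for the identification of the remaining integral with $\langle W,f_q\rangle$ via the substitution $\tau=\theta_{r_{|q|}}(x)$. You make the phase-cancellation reasoning slightly more explicit than the paper does, but the argument is the same.
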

\begin{proof}
    We assume that $q\geq 3$, and the negative case is identical.
    Fix an arbitrary $\theta_0\in[0,1]$, and let $x_0 = \theta_{r_q}(\theta_0)$.
    Then by Lemma \ref{lem:ActionEstimate}, we have
    \[|A-A_S-\sum_{k=0}^{q-1}W(x_{r_q}(\theta_0+kr_q))|\leq C(\varepsilon)q^5\normC{W}{1}^2,\]
    where here $A$ and $A_S$ denote the actions of of those $q$ periodic orbits starting at $x_0$ for $F_{V_S+W}$ and $F_{V_S}$, respectively.
    Since the invariant curve exists, the action does not depend on the choice of $\theta_0$.
    Hence,
    \[\int\limits_0^1 \sum_{k=0}^{q-1}W(x_{r_q}(\theta_0+kr_q))e^{-2\pi iq\theta_0}d\theta_0=\int_0^1[A_S+\sum_{k=0}^{q-1}W(x_{r_q}(\theta_0+kr_q)-A]e^{-2\pi iq\theta_0}d\theta_0.\]
    Hence by Lemma \ref{lem:ActionEstimate}, we have
    \[|\int\limits_0^1\sum_{k=0}^{q-1}W(x_{r_q}(\theta_0+kr_q))e^{-2\pi iq\theta_0}d\theta_0|\leq C(\varepsilon)q^5\normC{W}{1}^2.\]
    By using the change of varaibles $\theta\mapsto \theta+r_q$, we see that all summands in the left hand side have the same integral, so  we get
    \[|\int\limits_0^1W(x_{r_q}(\theta_0))e^{-2\pi iq\theta_0}d\theta_0|\leq C(\varepsilon)q^4\normC{W}{1}^2.\]
    In this integral we now change variables, $x=x_{r_q}(\theta_0)\leftrightarrow \theta_0=\theta_{r_q}(x)$:
    \[|\int\limits_0^1 W(x)e^{-2\pi i q \theta_{r_q}(x)}\theta'_{r_q}(x)dx|\leq C(\varepsilon)q^4\normC{W}{1}^2.\]
    But the left hand side is exactly $|\langle W,f_q\rangle|$ with respect to the inner product defined in \eqref{eq:NewInnerProduct}. 
\end{proof}
We also show another bound on the Fourier coefficients, which follows from the regularity of $W$.
This estimate is more useful as $q\to\infty$.
\begin{proposition}\label{prop:FourierCoefTail}
    There exists a constant $C(\varepsilon)$ such that for all $|q| \geq 9$
    \[|\langle W,f_q\rangle| \leq \frac{C(\varepsilon)\normC{W}{1}}{q}.\]
\end{proposition}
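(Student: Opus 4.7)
The plan is to recognize $\langle W,f_q\rangle$ as a Fourier coefficient of a $C^1$ function and apply integration by parts. First, I would unfold the inner product from \eqref{eq:NewInnerProduct}. Using Definition \ref{def:FourierBasis} for $|q|\geq 3$,
\[
\langle W,f_q\rangle = \int_0^1 W(x)\,e^{-2\pi i q\theta_{r_{|q|}}(x)}\frac{\theta'_{r_{|q|}}(x)}{\theta'_{\frac14}(x)}\,\theta'_{\frac14}(x)\,dx
= \int_0^1 W(x)\,e^{-2\pi i q\theta_{r_{|q|}}(x)}\theta'_{r_{|q|}}(x)\,dx.
\]
Performing the change of variables $u=\theta_{r_{|q|}}(x)$, which is a diffeomorphism of $S^1$ by Proposition~\ref{proposition:invariantCurves} (combined with the fact that $r_{|q|}\in[\tfrac16,\tfrac13]$), and writing $x_{r_{|q|}}$ for its inverse, we get
\[
\langle W,f_q\rangle = \int_0^1 W\bigl(x_{r_{|q|}}(u)\bigr)\, e^{-2\pi i q u}\,du,
\]
i.e., the $q$-th Fourier coefficient of $g_q(u):=W(x_{r_{|q|}}(u))$.

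Since $W$ and $x_{r_{|q|}}$ are $C^1$ and $1$-periodic, so is $g_q$, and standard integration by parts in the Fourier integral yields
\[
|\langle W,f_q\rangle| = \frac{1}{2\pi|q|}\left|\int_0^1 g_q'(u)\,e^{-2\pi i q u}\,du\right|\leq \frac{1}{2\pi|q|}\,\|g_q'\|_\infty.
\]
By the chain rule, $\|g_q'\|_\infty\leq \|W\|_{C^1}\cdot\|x'_{r_{|q|}}\|_\infty$. So the proposition will follow once I bound $\|x'_{r_{|q|}}\|_\infty$ uniformly in $q$ (for $|q|\geq 9$) by a constant $C(\varepsilon)$ depending only on the eccentricity.

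For this uniform bound I would appeal to the analyticity of the action-angle coordinate map $\psi$ established in Section \ref{sec:ActionAngle}: the family $\Omega\mapsto \theta_\Omega$ is analytic in $\Omega$ on the compact interval $[\tfrac16,\tfrac13]$ (where the foliation by invariant curves is regular for small $\varepsilon$), hence the family of inverses $\Omega\mapsto x_\Omega$ is also continuous in the $C^1$ topology on that compact interval. Consequently $\sup_{\Omega\in[1/6,1/3]}\|x'_\Omega\|_\infty$ is finite and depends only on $\varepsilon$; since every $r_{|q|}$ belongs to $[\tfrac16,\tfrac13]$, setting $C(\varepsilon):=\tfrac{1}{2\pi}\sup_{\Omega\in[1/6,1/3]}\|x'_\Omega\|_\infty$ gives the claimed inequality. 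The only mildly delicate point is ensuring the diffeomorphism property and the $C^1$ control of $x_{r_{|q|}}$ holds uniformly across the relevant rotation numbers, but this is already guaranteed by the invariant-curve analysis of Section \ref{sec:ActionAngle} once $\varepsilon$ is small enough.
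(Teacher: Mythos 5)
Your proof is correct, and it takes a modestly different route from the paper's. Both proofs extract the $1/q$ decay by recognizing $\langle W,f_q\rangle$ as a Fourier coefficient of a $C^1$, $1$-periodic function, but they differ in how the $q$-dependence of $f_q$ is handled. You perform the $q$-dependent change of variables $u=\theta_{r_{|q|}}(x)$ directly, obtaining the $q$-th Fourier coefficient of $W\circ x_{r_{|q|}}$, and then control $\|x'_{r_{|q|}}\|_\infty$ uniformly in $q$ by compactness of the interval $[\tfrac16,\tfrac13]$ together with regularity of the action-angle coordinate map (the fact that $\theta'_\Omega$ is bounded away from $0$ uniformly on the compact parameter range). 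The paper instead avoids the $q$-dependent change of variables: it replaces $f_q$ by the approximate basis element $\tilde e_q$ (which is expressed in terms of the single fixed diffeomorphism $\theta_{\frac14}$, times the uniformly $C^1$-bounded factor $\overline U^{t_q}$ with $t_q\in\{0,1,2,3\}$), obtains the tail bound for $\tilde e_q$, and then absorbs the difference $f_q-\tilde e_q$ via the $C^0$ estimate of Lemma~\ref{lemma:technical_fq_high}, which already carries the $1/|q|$ factor. Your route is a bit shorter and does not use $\tilde e_q$ or Lemma~\ref{lemma:technical_fq_high} at all, at the cost of invoking a uniform $C^1$ bound on the family $\{x_\Omega\}_{\Omega\in[1/6,1/3]}$; the paper's route defers that uniformity into the already-established Proposition~\ref{proposition:expansion_theta} and Lemma~\ref{lemma:technical_fq_high}. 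One small citation correction: the fact that $x_\Omega(\theta)$ is a diffeomorphism of $S^1$ (and the analyticity in $\Omega$ that underlies the uniform $C^1$ bound) is stated in the discussion around Proposition~\ref{proposition:expansion_theta} and the action-angle formulas \eqref{eq:ActionAngleFormula}, rather than in Proposition~\ref{proposition:invariantCurves}, which is about the graphs $M_\eta^{\sigma,k}$ being invariant.
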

\begin{proof}
    We again show this for $q>0$. 
    First we compute $\langle W,\tilde{e}_q\rangle$.

    \[\langle W,\tilde{e}_q\rangle = \int\limits_0^1 W(x)e^{-2\pi i q\theta_{\frac{1}{4}}(x)}\overline{U(x)}^{s_q}\theta_{\frac{1}{4}}'(x)dx=\int\limits_0^1 (W\cdot \overline{U}^{s_q})\circ x_{\frac{1}{4}}(\theta)e^{-2\pi i q \theta}d\theta.\]
    Hence the result is the $q$-th Fourier coefficient of $(W\cdot \overline{U}^{s_q})\circ x_{\frac{1}{4}}$ with respect to the standard Fourier basis of $L^2$.
    By the regularity of this function, we know that there is some constant $c$ such that
    \[|\langle W,\tilde{e}_q\rangle|\leq \frac{c\normC{(W\cdot\overline{U}^{s_q})\circ x_{\frac{1}{4}}}{1}}{q}.\]
    Since $x_{\frac{1}{4}}$ is a diffeomorphism, by replacing $c$ with a larger constant, we can replace $\normC{(W\cdot\overline{U}^{s_q})\circ x_{\frac{1}{4}}}{1}$ with $\normC{W\cdot \overline{U}^{s_q}}{1}$.
    From the definition of $U$ (see \eqref{eq:IntermediateBasisDefinition}), it holds that $|U|=1$. 
    Hence $\max|W\cdot \overline{U}^{s_q}|=\max|W|$, and 
    \[\max|(W\cdot \overline{U}^{s_q})'|=\max |W'\cdot \overline{U}^{s_q}+W\cdot (\overline{U}^{s_q})'|\leq 2c\normC{W}{1},\]
    for some constant $c$.
    So as a result we gather that 
    \begin{equation}\label{eq:eqTildeBound}
        |\langle W,\tilde{e}_q\rangle|\leq \frac{c\normC{W}{1}}{q}.
    \end{equation}
    Now, using Lemma \ref{lemma:technical_fq_high} and \eqref{eq:eqTildeBound} we can finish:
    \begin{gather*}
        |\langle W,f_q\rangle| \leq |\langle W,\tilde{e}_q-f_q\rangle|+|\langle W,\tilde{e}_q\rangle|\leq \int\limits_0^1 |W||\tilde{e}_q-f_q||\theta'_{\frac{1}{4}}(x)|dx + \frac{c\normC{W}{1}}{q}\leq \\
        \leq \frac{C(\varepsilon)\normC{W}{1}}{q}+\frac{c\normC{W}{1}}{q}.
    \end{gather*}
    This is exactly as desired. 
    
\end{proof}
To conclude this section, we show that while the basis $\set{f_q}$ is not necessarily orthonormal, we do have orthogonality of harmonics $f_q$ with $|q|\geq 3$ with the Harmonics $f_{\pm 1,\pm 2}$.
\begin{proposition}\label{prop:HighLowOrthognality}
    For all $|q|\geq 2$, and $j=\pm 1,\pm 2$, $\langle f_q,f_j\rangle =0$. 
\end{proposition}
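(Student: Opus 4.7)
The plan is to use that a perturbation of the Suris parameters yields another Suris potential, hence a rationally integrable map, and to combine this with the action estimate of Theorem \ref{thm:ActionEstimate}. Proposition \ref{prop:fpm1pm2AreDerivatives} guarantees that such a perturbation is, to first order, a prescribed linear combination of $f_{\pm 1}, f_{\pm 2}$. Extracting the first-order term will force $\langle L, f_q\rangle = 0$ for every $L \in \mathrm{span}\{f_{\pm 1}, f_{\pm 2}\}$ and every $|q| \geq 3$.

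Concretely, fix a direction $(\alpha_0, \beta_0, \gamma_0, \delta_0) \in \R^4$ and, for small $t > 0$, form the Suris potential $V_S^t$ with parameters $(A+t\alpha_0, B+t\beta_0, C+t\gamma_0, D+t\delta_0)$. Set $W^t = V_S^t - V_S$ and
\[
L = \frac{\beta_0+i\alpha_0}{2\pi} f_1 + \frac{\beta_0-i\alpha_0}{2\pi} f_{-1} + \frac{\delta_0+i\gamma_0}{2\pi} f_2 + \frac{\delta_0-i\gamma_0}{2\pi} f_{-2}.
\]
For $t$ sufficiently small, $V_S^t$ has eccentricity still bounded by $\varepsilon_*$, so Proposition \ref{prop:rotationIntervalIncludesStuff} gives that $F_{V_S^t} = F_{V_S + W^t}$ is rationally integrable in $[\tfrac{1}{6}, \tfrac{1}{3}]$ and in particular has invariant curves of rotation number $r_{|q|}$ for every $|q|\geq 3$. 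Moreover, by Proposition \ref{prop:fpm1pm2AreDerivatives}, $\normC{W^t - tL}{1} \leq K t^2$, so $\normC{W^t}{1} \leq M t$ for some $M$ depending on $L$.

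Applying Theorem \ref{thm:ActionEstimate} to $W^t$ yields, for every $|q|\geq 3$,
\[
|\langle W^t, f_q\rangle| \leq C(\varepsilon) q^4 \normC{W^t}{1}^2 \leq C(\varepsilon) M^2 q^4 \, t^2.
\]
On the other hand, writing $\langle W^t, f_q\rangle = t \langle L, f_q\rangle + \langle W^t - tL, f_q\rangle$ and bounding the remainder by Cauchy--Schwarz in the inner product \eqref{eq:NewInnerProduct}, we obtain $|\langle W^t - tL, f_q\rangle| \leq \|W^t - tL\| \cdot \|f_q\| \leq C_q t^2$ with $C_q$ depending only on $q$, $L$, and $V_S$. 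Combining the two bounds gives
\[
t\,|\langle L, f_q\rangle| \leq \bigl(C(\varepsilon) M^2 q^4 + C_q\bigr) t^2,
\]
and dividing by $t$ and letting $t \to 0^+$ forces $\langle L, f_q\rangle = 0$.

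Finally, since the direction $(\alpha_0, \beta_0, \gamma_0, \delta_0)$ was arbitrary, $L$ ranges over all of $\mathrm{span}_{\C}\{f_{\pm 1}, f_{\pm 2}\}$, and we conclude that $\langle f_j, f_q\rangle = 0$ for $j\in\{\pm 1, \pm 2\}$ and $|q|\geq 3$, as desired. The only real point to be careful about is that Theorem \ref{thm:ActionEstimate} applies to $V_S + W^t$, which requires $V_S^t$ to satisfy its hypotheses uniformly as $t\to 0$; this follows from the analytic dependence of Suris potentials on their parameters and from $\|W^t\|_{C^r} = O(t)$, which brings us inside the $\delta$-$C^r$ neighborhood of $V_S$ for arbitrarily small $t$.
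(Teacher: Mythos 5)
Your proposal is correct and takes essentially the same route as the paper: perturb the Suris parameters linearly, use Proposition \ref{prop:fpm1pm2AreDerivatives} to identify the first-order term with a combination of $f_{\pm1},f_{\pm2}$, apply Theorem \ref{thm:ActionEstimate} (valid because the perturbed potential is again Suris, hence rationally integrable), bound the remainder by Cauchy--Schwarz, divide by the small parameter and let it tend to zero. The only cosmetic difference is that the paper fixes two specific real directions ($f_1-f_{-1}$ and $f_1+f_{-1}$) and runs the argument twice, whereas you keep the direction $(\alpha_0,\beta_0,\gamma_0,\delta_0)$ arbitrary; both yield the same span and the same conclusion.
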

\begin{proof}
    We prove for example for $j=\pm 1$.
    Let $\delta>0$ be an arbitrary positive number.
    Consider the  (real valued) $1$-periodic function  $W=\frac{\delta i}{2\pi}(f_1-f_{-1})$.
    Then by Proposition \ref{prop:fpm1pm2AreDerivatives}, there is a Suris potential $\tilde{V}_S$ for which 
    \[\normC{\tilde{V}_S-V_S-W}{1}\leq K\delta^2.\]
    We view $\tilde{V}_S$ as a perturbation of $V_S$. 
    This perturbation is integrable since it is a Suris perturbation.
    Hence by Theorem \ref{thm:ActionEstimate}, we have, for $|q|\geq 3$,
    \[|\langle \tilde{V}_S-V_S,f_q\rangle|\leq C(\varepsilon)q^4 \normC{\tilde{V}_S-V_S}{1}^2.\]
    By the triangle inequality,
    \[\normC{V_S-\tilde{V}_S}{1}\leq \normC{\tilde{V}_S-V_S-W}{1}+\normC{W}{1}\leq K\delta,\]
    and
    \[|\langle \tilde{V}_S-V_S,f_q\rangle|\geq |\langle W,f_q\rangle|-|\langle \tilde{V}_S-V_S-W,f_q\rangle|.\]
    Together we get 
    \[|\langle W,f_q\rangle|\leq C(\varepsilon)q^4\delta^2+|\langle \tilde{V}_S-V_S-W,f_q\rangle|.\]
    The inner product is bounded by $\normC{\tilde{V}_S-V_S-W}{1}$, using the Cauchy-Schwarz inequality.
    The conclusion is that 
    \[|\langle W,f_q\rangle|\leq Kq^4\delta^2.\]
    This holds for all $|q|\geq 3$ and all $\delta>0$ small enough. 
   Then given $|q|\geq 3$, we can choose all $\delta>0$ small enough so that $|q|\leq \delta^{-\frac{1}{4}}\leftrightarrow\delta<|q|^{-4}$, and then we get 
   \[|\langle \frac{i}{2\pi}(f_1-f_{-1}),f_q\rangle  \leq K\delta,\]
   and thus, this inner product is zero.
   Then we can repeat the same argument with $W=\frac{\delta}{2\pi}(f_1+f_{-1})$, and together we get that 
   \[\langle f_1,f_q\rangle=\langle f_{-1},f_q\rangle =0.\]
\end{proof}
%\textbf{\textcolor{red}{We need to think if the $\frac{C}{q}$ bound for the tail of Fourier coefficients works. It probably works. The inner products with $f_q$ and $\tilde{e}_q$ are $\frac{1}{q}$ apart, but we don't know expclicitly how far $e_q$ and $\tilde{e}_q$ are. We only know the decay for the ONB $e_q$...}}
\section{Proof of Theorem \ref{thm:LocalRigidity}}\label{sec:ProofOfMainThm}
We are now in position to prove Theorem \ref{thm:LocalRigidity}.
The proof follows the same idea that was used for billiards in \cite{avila2016integrable}.
Suppose we are given a rationally integrable map of the form $F = F_{V_S+W}$.
The crucial step is to find another Suris potential $\tilde{V}_S$ which is much closer to $V_S+W$ than $V_S$.
This is done by considering the ``Fourier coefficients" of $W$ with respect to our basis up to order $2$.
\begin{proposition}\label{prop:ProjectPotential}
    If $V_S+W$ is the potential of an integrable twist map, with $V_S$ having sufficiently low eccentricity, and $\normC{W}{r}$ being small enough, for $r\geq 23$, then there exists a Suris potential $\tilde{V}_S$ for which $\tilde{W}=V_S+W-\tilde{V}_S$ satisfies
    \begin{equation}\label{eq:ProjectionEstimate}
        \normC{\tilde{W}}{1}\leq C(\normC{W}{23})\normC{W}{1}^{\frac{231}{230}},
    \end{equation}
    where $C(\cdot)$ is some monotone function.
\end{proposition}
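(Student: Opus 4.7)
The plan is to project $W$ orthogonally (with respect to the inner product \eqref{eq:NewInnerProduct}) onto the ``tangent direction'' of the Suris manifold and then exploit rational integrability to control the remainder. Writing $W = W_{\mathrm{low}} + W_{\mathrm{high}}$ with
\[
W_{\mathrm{low}} \in \mathrm{span}\{f_0, f_{\pm 1}, f_{\pm 2}\}, \qquad W_{\mathrm{high}} \perp \mathrm{span}\{f_0, f_{\pm 1}, f_{\pm 2}\},
\]
is legitimate thanks to Proposition \ref{prop:HighLowOrthognality}. Since $W$ is real and $f_{-j} = \overline{f_j}$, we may uniquely write
\[
W_{\mathrm{low}} = c_0 + \frac{\beta+i\alpha}{2\pi} f_{1} + \frac{\beta-i\alpha}{2\pi} f_{-1} + \frac{\delta+i\gamma}{2\pi} f_{2} + \frac{\delta-i\gamma}{2\pi} f_{-2},
\]
for real scalars $\alpha,\beta,\gamma,\delta,c_0$, which by Lemma \ref{lem:EquivNorm} are all $O(\|W\|_{C^1})$.

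Next, I would set $\tilde V_S$ to be the Suris potential with parameters $(A+\alpha, B+\beta, C+\gamma, D+\delta)$, adjusted by the constant $c_0$ (constants do not affect $F_V$, but must be matched for the $C^1$ comparison). By Proposition \ref{prop:fpm1pm2AreDerivatives}, the leading linearization is exact, so
\[
\|V_S + W_{\mathrm{low}} - \tilde V_S\|_{C^1} \leq K'\,\|W_{\mathrm{low}}\|_{C^1}^2 \leq K''\,\|W\|_{C^1}^2.
\]
Writing $\tilde W = V_S + W - \tilde V_S = (V_S + W_{\mathrm{low}} - \tilde V_S) + W_{\mathrm{high}}$, the proof reduces to bounding $\|W_{\mathrm{high}}\|_{C^1}$ by a slightly super-linear power of $\|W\|_{C^1}$.

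This is the main obstacle. The idea is to combine two complementary estimates on $\langle W, f_q\rangle$. For each $|q|\geq 3$ the hypothesis of rational integrability feeds Theorem \ref{thm:ActionEstimate} to give
\[
|\langle W, f_q\rangle| \leq C(\varepsilon)\, q^{4}\,\|W\|_{C^1}^2,
\]
which is efficient only for moderate $q$. For large $q$ one instead extends Proposition \ref{prop:FourierCoefTail} by iterated integration by parts against $f_q$, obtaining
\[
|\langle W, f_q\rangle| \leq \frac{C(\varepsilon,V_S)\,\|W\|_{C^{k}}}{|q|^{k}},
\]
for every $k \leq r = 23$. Invoking Proposition \ref{prop:LikeParseval} and splitting the sum at a cutoff $N$,
\[
\|W_{\mathrm{high}}\|_{L^2}^{2} \leq C \sum_{3\leq |q|\leq N} q^{8}\,\|W\|_{C^1}^{4} \;+\; C\sum_{|q|>N} \frac{\|W\|_{C^{r}}^{2}}{q^{2r}} \leq C'\,N^{9}\,\|W\|_{C^1}^{4} + C''\,N^{1-2r}\,\|W\|_{C^{r}}^{2}.
\]

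Balancing the two terms by choosing $N$ as a negative power of $\|W\|_{C^1}$ yields an estimate of the shape $\|W_{\mathrm{high}}\|_{L^2} \leq C(\|W\|_{C^{23}})\,\|W\|_{C^1}^{\mu}$ with $\mu > 1$. A Gagliardo--Nirenberg/Sobolev interpolation in dimension one,
\[
\|u\|_{C^{1}} \leq C\,\|u\|_{L^{2}}^{1-\theta}\,\|u\|_{C^{r}}^{\theta},
\]
applied to $W_{\mathrm{high}}$ (using that $\|W_{\mathrm{high}}\|_{C^{r}} \leq C\|W\|_{C^{r}}$, since $W_{\mathrm{low}}$ lies in a fixed finite-dimensional space where all norms are equivalent by Lemma \ref{lem:EquivNorm}), then transfers the improvement to $C^{1}$. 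The exponent $231/230$ in the statement arises from a concrete, not necessarily optimal, choice of the cutoff $N$ and the interpolation parameter $\theta$ that keeps the $\|W\|_{C^1}$--exponent strictly above $1$ while absorbing a bounded (but possibly large) function of $\|W\|_{C^{23}}$. The delicate point is the calibration: the integrability estimate loses a factor $q^{4}$ per frequency, so one must afford enough derivatives ($r = 23$ is sufficient) to make the tail estimate dominant past the cutoff, and simultaneously make the interpolation exponent $1-\theta$ close enough to $1$ that the gain $\mu > 1$ is not destroyed.
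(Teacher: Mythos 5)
Your overall strategy matches the paper's: project $W$ onto $\mathrm{span}\{f_0,f_{\pm1},f_{\pm2}\}$ using the inner product \eqref{eq:NewInnerProduct}, use Proposition~\ref{prop:fpm1pm2AreDerivatives} to replace the low-frequency part by a genuine Suris potential up to a quadratically small error, control the high-frequency remainder through Proposition~\ref{prop:LikeParseval} by combining the integrability bound (Theorem~\ref{thm:ActionEstimate}) with a tail bound and a frequency cutoff, and finally upgrade the resulting $L^2$ gain to $C^1$ via one-dimensional interpolation at the cost of a $C^{23}$ factor. This is exactly the paper's road map, and the choice of $q_0$ (resp.\ $N$) is the same balancing step.

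However, there is a genuine gap in your tail estimate. You claim that iterated integration by parts gives
\[
|\langle W,f_q\rangle|\leq \frac{C\,\normC{W}{k}}{|q|^k}\qquad\text{for all }k\leq 23,
\]
but this is false for $k\geq 2$. Integration by parts of this kind only applies cleanly to $\langle W,\tilde e_q\rangle$, because $\tilde e_q=e_q\,U^{t_q}$ is a genuine smooth exponential with $t_q\in\{0,1,2,3\}$ bounded; that piece does decay like $\normC{W}{k}/q^k$. The obstruction is the replacement error: by Lemma~\ref{lemma:technical_fq_high} one has
\[
\normC{f_q-\tilde e_q}{0}\leq \frac{K(\varepsilon)}{|q|},
\]
and this is sharp, since in the proof $e^{2\pi i q v(r_q,x)}=1+\mathcal O(1/q)$ and $\theta_{r_q}'/\theta_{1/4}'=1+\mathcal O(1/q)$. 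Thus
\[
|\langle W,f_q-\tilde e_q\rangle|\leq C\,\normC{W}{0}\cdot\frac{K(\varepsilon)}{|q|},
\]
and no amount of extra smoothness of $W$ improves this $1/q$; integrating by parts against $f_q$ directly is also not an option, because $(f_q-\tilde e_q)$ is not controlled in any $C^k$, $k\geq1$, with decay in $q$. Consequently the claimed $q^{-k}$ bound (and hence the line $\sum_{|q|>N}\normC{W}{C^r}^2/q^{2r}$) is not available.

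Fortunately this does not sink the argument: the correct and already-proved tail bound is precisely Proposition~\ref{prop:FourierCoefTail}, $|\langle W,f_q\rangle|\leq C(\varepsilon)\normC{W}{1}/|q|$, whose square-sum past $q_0$ is $\mathcal O\bigl(\normC{W}{1}^2/q_0\bigr)$, and this already balances against $q_0^9\normC{W}{1}^4$ at $q_0\sim\normC{W}{1}^{-1/5}$ to give $\norm{W-P}_S\lesssim\normC{W}{1}^{11/10}$. Combined with the Sobolev interpolation (order $j=2$, $m=23$, $\delta=\normC{W}{1}^{231/230}$), this yields exactly \eqref{eq:ProjectionEstimate}. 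You should therefore drop the iterated integration by parts claim and use the stated form of Proposition~\ref{prop:FourierCoefTail}; with that change the proof coincides with the paper's.
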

\begin{proof}
    Since the eccentricity of $V_S$ is small enough, then by Proposition \ref{proposition:Hilbertbasis}, the collection $(f_q)_q$ of Definition \ref{def:FourierBasis} is in fact a basis.
    Denote by $A,B,C,D$ the parameters of $V_S$.
    Find $\alpha,\beta,\gamma,\delta,W_0\in\R$ for which
    \[P=W_0+\frac{1}{2\pi}((\beta+i\alpha)f_1+(\beta-i\alpha)f_{-1}+(\delta+i\gamma)f_2+(\delta-i\gamma)f_{-2})\]
    is the projection of $W$ onto the space spanned by $\set{f_0,f_{\pm1},f_{\pm2}}$.
    Since the dynamics is unaffected by shifts by constant of the potential, we may assume that $W_0=0$.
    Then if we consider the Suris potential $\tilde{V}_S$, with parameters $A+\alpha,B+\beta,C+\gamma,D+\delta$, then by Proposition \ref{prop:fpm1pm2AreDerivatives}, we get
    \[\normC{V_S-(\tilde{V}_S-P)}{1}\leq K(\alpha^2+\beta^2+\gamma^2+\delta^2)\leq K'\normC{P}{1}^2.\]
    Denote by $\norm{\cdot}_S$ the norm with respect to the inner product \eqref{eq:NewInnerProduct}, defined with the help of the potential $V_S$.
    By Proposition \ref{prop:HighLowOrthognality}, we have
    \[\norm{W}_S^2=\norm{P}_S^2+\norm{W-P}_S^2.\]
     Hence in total, for $\tilde{W}:=V_S+W-\tilde{V}_S$,
\begin{equation}\label{eq:WTildeEstimate}
    \normC{\tilde{W}}{1}=\normC{(V_S+P-\tilde{V}_S)+(W-P)}{1}\leq K'\normC{P}{1}^2+\normC{W-P}{1}.    
\end{equation}
Using Lemma \ref{lem:EquivNorm},  $\normC{P}{1}$ is comparable with $\normL{P}{2}$ up to a multiplicative constant, and thus by changing $K'$ to another constant, this is bounded by $\normL{W}{2}^2$.

    According to Proposition \ref{prop:LikeParseval}, we have 
    \begin{equation}\label{eq:ApplicationOfBessel}
        \norm{W-P}_S^2\leq C\sum_{|q|\geq 3} |\langle W-P,f_q\rangle|^2=C\sum_{|q|\geq 3} |\langle W,f_q\rangle|^2.
    \end{equation}
    Here we used again the orthogonality of Proposition \ref{prop:HighLowOrthognality},
  % \[C\sum_{|q|\geq 3}|\langle W,f_q\rangle |^2.\]
\[  \langle W-P,f_q\rangle=\langle W,f_q\rangle.\]
    For the right hand side of \eqref{eq:ApplicationOfBessel}, we set $q_0=\max\left(\left\lfloor\normC{W}{1}^{-\frac{1}{5}}\right\rfloor,8\right)$, and we split the sum:
    \[\sum_{|q|\geq 3} |\langle W,f_q\rangle|^2=\sum_{|q|=3}^{q_0}|\langle W,f_q\rangle |^2+\sum_{|q|=q_0+1}^\infty |\langle W,f_q\rangle |^2.\]
    For the finite sum we use the fact that $\normC{W}{r}$ is small enough, so we can use Theorem \ref{thm:ActionEstimate}:
    \begin{equation}\label{eq:FiniteSumEstimate}
        \sum_{|q|=3}^{q_0}|\langle W,f_q\rangle|^2\leq C\normC{W}{1}^4q_0^9.
    \end{equation}
    For the infinite sum, we use Proposition \ref{prop:FourierCoefTail}:
    \begin{equation}\label{eq:SeriesTailEstimate}
        \sum_{|q|=q_0+1}^\infty|\langle W,f_q\rangle |^2\leq \frac{C\normC{W}{1}^2}{q_0}.
    \end{equation}
    Our choice of $q_0$ guarantees that both \eqref{eq:FiniteSumEstimate} and \eqref{eq:SeriesTailEstimate} are bounded by $C\normC{W}{1}^{\frac{11}{5}}$, and hence 
    \begin{equation}\label{eq:SNormWminusP}
        \norm{W-P}_S\leq C\normC{W}{1}^{\frac{11}{10}}.
    \end{equation}
    We transform this estimate into the estimate on the $C^1$ norm of $W-P$ using the Sobolev interpolation inequality (see, e.g., \cite[Theorem 7.28]{gilbarg1977elliptic}).
    Also note that because $\theta'_{\frac{1}{4}}(x)$ is positive, bounded, and bounded away from zero, the norm $\norm{\cdot}_S$ is equivalent to the usual $L^2$ norm.
    Hence we can use these norms interchangebly in the Sobolev interpolation inequality.
    Hence, for $j=1,2$, $m\in\N$, and $\delta>0$ we have
    \[\normL{(W-P)^{(j)}}{2}\leq C(\delta\normC{W-P}{m}+\delta^{-j\slash(m-j)}\norm{W-P}_S).\]
    The stricter inequality is for $j=2$. 
    In this case we can put $m=23$ and $\delta=\normC{W}{1}^{\frac{231}{230}}$, and we get
    
    \[  \normL{(W-P)'}{2},\normL{(W-P)''}{2}\leq   C\normC{W}{1}^{\frac{231}{230}}(1+\normC{W-P}{23}).\]

    From which it follows also that 
    \begin{equation}\label{eq:L2C1FunnyPower}
    \normC{W-P}{1}\leq C\normC{W}{1}^{\frac{231}{230}}(1+\normC{W-P}{23}). 
    \end{equation}
    Now we come back to \eqref{eq:WTildeEstimate}, and the remark below it, to get
    \[\normC{\tilde{W}}{1}\leq C(\normL{W}{2}^2+\normC{W}{1}^{\frac{231}{230}}(1+\normC{W-P}{23})).\]
    The $L^2$ norm of $W$ is bounded by its $C^{23}$ norm. 
   Using Lemma \ref{lem:EquivNorm}, the $C^{23}$ norm is equivalent to the $S$-norm, so
    \[\normC{W-P}{23}\leq \normC{W}{23}+\normC{P}{23}\leq \normC{W}{23}+M\norm{P}_S.\]
    But since $P$ is the projection of $W$, it is shorter, so it is bounded by $\norm{W}_S$, which is in turn bounded by the $C^{23}$ norm.
    Combining this all, we give an inequality of the form
    \[\normC{W-P}{23}\leq M\normC{W}{23},\]
    for some constant $M$ that does not depend on eccentricity, as long as it is small enough.
    % In principle, this constant depends on the parameters of the Suris potential $V_S$. \textcolor{red}{I think we use this argument several times during the proof. Perhaps it needs to be highlighted as a Lemma?}
    % However, since $V$ is analytic and converges uniformly to a constant as the eccentricity goes to $0$, then all of its derivatives also converge to zero uniformly in the eccentricity.
    % This implies that when the eccentricity is small enough, then the ratio of the $C^{23}$ norms of $f_{\pm1,\pm2}$ and their $\norm{\cdot}_S$ norms is bounded uniformly with the eccentricity.
    % %(\textbf{\textcolor{red}{what does $M$ depend on? Right now it depends on $V_S$, but I want it not to depend on anything. Maybe $M$ can be bounded from above for $\varepsilon<\varepsilon_*$}} \textcolor{blue}{$C^{23}$ norm of basis elements converges to good things because $V$ is analytic and cauchy stuff, and then use linearity to propgate the estimate}).
    Thus, all together we get the required result,
    \[\normC{\tilde{W}}{1}\leq C(\normC{W}{23})\normC{W}{1}^{\frac{231}{230}},\]
    where $C(\cdot)$ is a monotone function.
\end{proof}

Finally, let us consider a Suris potential $V_S$ with eccentricity small enough so that Proposition \ref{proposition:Hilbertbasis} holds.
Suppose that $W$ is a $C^{r}$ function for which the $C^r$ norm is smaller than some $0<\delta<1$ that will soon be specified, and for which the $C^{23}$ norm is smaller than $1$. 
We assume that $F_{V_S+W}$ is rationally integrable. 
As long as $\delta$ is small enough, if $\tilde{V}_S$ is a Suris potential and $\normC{V_S-\tilde{V}_S}{1}<\delta$  then the eccentricity of $\tilde{V}_S$ is also small enough for Proposition \ref{proposition:Hilbertbasis} to hold.
Moreover, we may shrink $\delta$ so that this ball is contained in the ball of radius $1$ around $V_S$ in the $C^{23}$ norm.
The map that assigns to a tuple $(A,B,C,D)$ with $A^2+B^2+C^2+D^2\leq\frac{1}{2}$ its corresponding Suris potential is a continuous injective map from a compact space (we can always assume this assumption on the eccentricity by restricting $\varepsilon_*$ further).
Hence it is homemorphic to its image.
This implies that the set of all Suris potentials of $C^1$ distance  at most $2\delta$ from $V_S$ is compact.
%\textbf{\textcolor{red}{probably because $C^1$ metric on Suris is equivalent to the norm of the parameters, but how to show this and is it really necessary}}.
Moreover, the function $\tilde{V}_S\mapsto \normC{V_S+W-\tilde{V}_S}{1}$ is continuous, and hence attains a minimum on this ball. 
Let us denote by $V^*_S$ the minimizer, and by $W^*:=V_S+W-V^*_S$.
Since the original potential $V_S$ is in the minimization domain, we know that $\normC{W^*}{1}\leq \delta$.
Now we use Proposition \ref{prop:ProjectPotential}, to find another Suris potential $\tilde{V}_S$ and a function $\tilde{W}$ such that
\[\tilde{V}_S+\tilde{W}=V^*_S+W^*=V_S+W,\]
and
\[\normC{\tilde{W}}{1}\leq C(\normC{W^*}{23})\normC{W^*}{1}^{\frac{231}{230}}.\]
Note that 
\[\normC{W^*}{23}=\normC{V_S-V^*_S+W}{23}\leq \normC{V_S-V^*_S}{23}+\normC{W}{23}\leq 2.\]
Hence, if we choose $\delta$ small enough so that 
\begin{equation}\label{eq:ChoiceOfDelta}
    C(2)\delta^{\frac{231}{230}}\leq \frac{1}{2}\delta,
\end{equation}
then we see that 
\[\normC{\tilde{W}}{1}\leq \frac{1}{2}\normC{W^*}{1}\leq\frac{1}{2}\delta.\]
Hence $\tilde{V}_S$ is inside the minimization domain, as 
\[\normC{V_S-\tilde{V}_S}{1}=\normC{\tilde{W}-W}{1}\leq \normC{\tilde{W}}{1}+\normC{W}{1}<2\delta.\]
But then the minimality of $W^*$ implies that 
\[\normC{W^*}{1}\leq\normC{\tilde{W}}{1}.\]
But together with \eqref{eq:ChoiceOfDelta} we must have $\tilde{W}=W^*=0$ which means that $V_S+W=\tilde{V}_S$ is again a  Suris potential, which finishes the proof.

\section{Proof of Corollaries \ref{cor:SpecRigid1} and \ref{cor:SpecRigid2}}\label{subsec:SpecRigid}
\label{section:rigidity_cor_proof}

Let us first prove Corollary \ref{cor:SpecRigid2} using Corollary \ref{cor:SpecRigid1}: given a $1$-parameter family of potentials $V_{\tau}$ satisfying
\[
\mathscr A(V_{\tau}) = \mathscr A(V_{0}),\qquad\tau\in I,
\]
the corresponding Mather's beta functions of each potential of the family are the same  -- see \cite[Corollary 3.2.3]{Siburg} for billiards, \cite{FKScpt} for lower regularity, both can also be applied to any twist maps. Hence
\[
\beta_{V_{\tau}} = \beta_{V_{0}},\qquad\tau\in I.
\]
Hence Corollary \ref{cor:SpecRigid2} holds if Corollary \ref{cor:SpecRigid1} is also true.\\
\vspace{0.2cm}

Let us now prove Corollary \ref{cor:SpecRigid1}. Consider $\varepsilon_*>0$, $r\geq 23$ and $\delta$ as in Theorem \ref{thm:LocalRigidity}.
Let $V_S$ be a Suris potential $V_S$ of eccentricity $\varepsilon\in(0,\varepsilon_*)$ and another potential $V$ for which $\normC{V-V_S}{r}\leq \delta$
\[
\beta_V=\beta_{V_S}.
\]

As $V_S$ is rationally integrable in $[\frac{1}{6},\frac{1}{3}]$, the map $\beta_{V_S}$ restricted to $[\frac{1}{6},\frac{1}{3}]$ is differentiable -- see \cite[Theorem 1.3.7]{Siburg}. Hence so does $\beta_{V}$. As a consequence, $F_V$ is rationally integrable in the interval $[\frac{1}{6},\frac{1}{3}]$  -- see \cite[Theorem 1.3.7]{Siburg}. By Theorem \ref{thm:LocalRigidity}, $V$ is also a Suris potential.

\section{Proof of Theorem \ref{thm:NoHigherOrderInteg}}\label{sec:rotSymmetry}
The proof is essentially a repeat of the main ideas of \cite{10.1093/imrn/rnab366}, adapted to this setting.
Suppose $y=f(x)$ is the equation of the invariant curve of rotation number $\frac{r}{k}$.
Observe that if $x_0,...,x_{k-1}$ is such a periodic orbit, then
\[x_{j+1}-2x_j+x_{j-1}=V'(x_j)\]
and by periodicity of $V$ we also get that
\[x_{j+1}+\frac{r}{k}-2(x_j+\frac{r}{k})+x_{j-1}+\frac{r}{k}=V'(x_j+\frac{r}{k}).\]
This means that $x_0+\frac{r}{k},...,x_{k-1}+\frac{r}{k}$ is also a $k$ periodic orbit. 
Hence, we can compute the corresponding vertical coordinates:
\[f(x_0)=y_0=x_1-x_0-V'(x_0)=(x_1+\frac{r}{k})-(x_0+\frac{r}{k})-V'(x_0+\frac{r}{k})=f(x_0+\frac{r}{k}).\]
Since from any point $x_0$ we have a $k$ periodic orbit, then this equality holds for all $x_0\in[0,1]$.
Hence the function $f$ is also $\frac{r}{k}$-periodic.
Denote by $R:S^1\times\R\to S^1\times \R$ the self map of the cylinder given by 
\[R(x,y)=(x+\frac{r}{k},y).\]
Then the assumption that $V$ is $\frac{r}{k}$-periodic means that $T\circ R=R\circ T$. 
Denote by $S:S^1\to S^1$ the restriction of $T$ to our invariant curve, and by $\tilde{S}:\R\to\R $ its lift, so that
\[T(x,f(x))=(S(x),f(S(x))).\]
Evaluate both sides of the equality $T\circ R= R\circ T$ on the points $(x,f(x))$:
\[T\circ R(x,f(x))=T(x+\frac{r}{k},f(x))=T(x+\frac{r}{k},f(x+\frac{r}{k}))=(S(x+\frac{r}{k}),f(S(x+\frac{r}{k}))),\]
\[R\circ T(x,f(x))=R(S(x),f(S(x)))=(\tilde{S}(x)+\frac{r}{k},f(S(x))).\]
Hence it follows that 
\[\tilde{S}(x+\frac{r}{k})=\tilde{S}(x)+\frac{r}{k}.\]
But since the orbits have rotation number $\frac{r}{k}$, we have $\tilde{S}^k(x)=x+r$.
Then these two facts imply that $\tilde{S}(x)=x+\frac{r}{k}$ (see, e.g., \cite[Lemma 2.1]{10.1093/imrn/rnab366}).
Thus the $x$ coordinates of the orbit starting at $x$ are just $\set{x+\frac{rj}{k}}_{j=0,...,{k-1}}$.
But then the Frenkel-Kontorova equation gives us:
\[V'(x)=(x+\frac{r}{k})-2x+(x-\frac{r}{k})=0,\]
so $V$ is constant.

\bibliography{NewBibliography.bib}
\bibliographystyle{abbrv}
\end{document}